\date{}
\title{Planar  Semi-quasi Homogeneous  Polynomial differential systems with a given degree
\thanks{The second author is supported by the NSF of China (No. 11771101) and NSF of Guangdong Province (No.2015A030313669).}}
\author{Yuzhou Tian$^{1,2}$ and Haihua Liang$^{\ast,1}$\\
 1. School of Mathematics and Systems Science,\ Guangdong Polytechnic Normal\\ University,\ Guangzhou\ 510665,\ P.R.\ China\\
 2. School of Mathematics (Zhuhai),\ Sun Yat-sen University,\ Zhuhai\ 519082,\\ P.R.\ China\\
 e-mail: \ lianghhgdin@126.com}
\newtheorem {theorem*}{Theorem}
\newtheorem {theorem} {Theorem}
\newtheorem{algorithm}{Algorithm: Part}
\newtheorem{definition}{Definition}
\newtheorem{proposition}{Proposition}
\newtheorem{lemma}{Lemma}
\newtheorem{corollary}{Corollary}
\newtheorem{remark}{Remark}
\numberwithin{equation}{section}
\begin{document}
\maketitle
\noindent {\bf Abstract} This paper  study the planar semi-quasi homogeneous
 polynomial differential systems (short for PSQHPDS), which can be regard as   a generalization of semi-homogeneous
 and of quasi-homogeneous systems.
By using the algebraic skills, several important  properties of  PSQHPDS
 are derived and are employed to establish an algorithm for
  obtaining all the explicit expressions of  PSQHPDS with a given degree.
 Afterward, we apply this algorithm to research the center problem of
  quadratic and cubic  PSQHPDS. It is proved that the quadratic one hasn't center,
  and,  that the  cubic one  has  center if and only if it can
  be written as   $\dot{x}=x^2-y^3, \dot{y}=x$ after a linear transformation of coordinate
   and a rescaling of time.
\smallskip

\noindent {\bf 2000 Math Subject Classification: } Primary 37C10.  Secondary 34C25.  Tertiary 68W99

\smallskip

\noindent {\bf Key words and phrases:} {Semi-quasi homogeneous; Polynomial differential systems; Algorithm; Center}
\section{Introduction}

Since Hilbert D.  posed the 16th problem in 1900,  the qualitative   theory of planar differential systems have become   a  main  subject in the field of ordinary differential equation.  Many people have researched the qualitative behaviors of various planar polynomial  differential systems   including the homogeneous, semi-homogeneous  and quasi-homogeneous  polynomial differential systems,  and obtained a lot of valuable
 results.  In this paper,  we consider the  differential systems of the form
\begin{equation}\label{1} \dot{\mathop{x}}\,=P\left( x,y \right),\  \dot{\mathop{y}}\,=Q\left( x,y \right), \end{equation}
where $P$, $Q$ $\in $ $\mathbb{R}\left[ x,y \right]$, the ring of all polynomials in the variables $x$ and $y$ with coefficients in real number system $\mathbb{R}$.  We say that system \eqref{1} has degree $n$, if $n$ is the maximum of degree $P$ and $Q$.  Denoted by $X = \left(P, Q\right)$ the polynomial vector field associated to \eqref{1}.

The polynomial differential system \eqref{1} or the vector field $X$ is {\itshape quasi-homogeneous} if there exist ${{s}_{1}},{{s}_{2}},d\in {\mathbb{N}^{+}}$ such that for any arbitrary $  \alpha \in {\mathbb{{R}}^{+}}$,
\[P\left( {{\alpha }^{{{s}_{1}}}}x,{{\alpha }^{{{s}_{2}}}}y \right)={{\alpha }^{{{s}_{1}}-1+d}}P\left( x,y \right),\ Q\left( {{\alpha }^{{{s}_{1}}}}x,{{\alpha }^{{{s}_{2}}}}y \right)={{\alpha }^{{{s}_{2}}-1+d}}Q\left( x,y \right). \]
We call ${{s}_{1}}$ and ${{s}_{2}}$  the {\itshape weight exponents} of system \eqref{1} ,  and $d$ the {\itshape weight degree} with respect to the weight exponents ${{s}_{1}}$ and ${{s}_{2}}$.  In the particular case that
${{s}_{1}={{s}_{2}}}=1$,   system \eqref{1} is the classical {\itshape homogeneous } polynomial differential system of degree $d$.
As was done in \cite{1}, the  vector $w=\left( {{s}_{1}},{{s}_{2}},d \right)$ is called the weight vector of system \eqref{1}, and  a weight vector ${{w}_{m}}=\left( s_{1}^{*},s_{2}^{*},{{d}^{*}} \right)$ is called a {\itshape minimal weight vector} of   system  \eqref{1} if any other weight vector $w=\left( {{s}_{1}},{{s}_{2}},d \right)$ of system \eqref{1} satisfies ${{s}_{1}}\ge s_{1}^{*},{{s}_{2}}\ge s_{2}^{*},d\ge {{d}^{*}}$.

The polynomial differential system \eqref{1} or the vector field $X$ is {\itshape semi-quasi homogeneous} if there exist ${{s}_{1}},{{s}_{2}}, {{d}_{1}}, {{d}_{2}}\in {\mathbb{{N}}^{+}}$ and ${{d}_{1}}\ne {{d}_{2}}$ such that for any arbitrary  $  \alpha \in {\mathbb{R}^{+}}$,
\begin{equation}\label{2}P\left({{\alpha }^{{{s}_{1}}}}x, {{\alpha }^{{{s}_{2}}}}y \right)={{\alpha }^{{{s}_{1}}-1+{{d}_{1}}}}P\left( x, y \right), \ Q\left( {{\alpha }^{{{s}_{1}}}}x, {{\alpha }^{{{s}_{2}}}}y \right)={{\alpha }^{{{s}_{2}}-1+{{d}_{2}}}}Q\left( x, y \right).  \end{equation}
We endow ${{s}_{1}}$ and ${{s}_{2}}$  the {\itshape weight exponents} of system \eqref{1}.  We also call ${d_{1}}$ and ${d_{2}}$ the {\itshape weight degree} with respect to weight exponents ${s_{1}}$ and ${s_{2}}$, respectively.  As far as we know, this  concept of planar semi-quasi homogeneous polynomial
differential systems (or {\itshape PSQHPDS} for short)
 was firstly proposed by \cite{27}. It is worth mentioning that, although   many articles also use the concept of semi-quasi homogeneous
differential systems (such as  \cite{49} and the first chapter of \cite{50}), the meaning is very different from  the one in \cite{27}.   Obviously,     {\itshape PSQHPDS} \eqref{1} is a {\itshape semihomogeneous} polynomial differential system  once ${{s}_{1}}={{s}_{2}}=1$.

 We call, analogously to \cite{1},    $w=\left( {{s}_{1}}, {{s}_{2}}, {d_{1}, {d_{2}}} \right)$ the weight vector of {\itshape PSQHPDS} \eqref{1}.
  A weight vector ${{w}_{m}}=\left( s_{1}^{*}, s_{2}^{*}, {{d_{1}}^{*}}, {{d_{2}}^{*}} \right)$ is called  a {\itshape minimal weight vector} of   {\itshape PSQHPDS}  \eqref{1} if any other weight vector  $w=\left( {{s}_{1}}, {{s}_{2}}, {d_{1}, {d_{2}}} \right)$ of   \eqref{1} satisfies ${{s}_{1}}\ge s_{1}^{*}, {{s}_{2}}\ge s_{2}^{*}, {d_{1}}\ge {{{d}_{1}}^{*}}, {{d}_{2}}\ge{{d}_{2}}^{*}$. As we will see in Section \ref{se-3},   the minimal weight vector of a given  is  {\itshape PSQHPDS}  unique once $s_1\ne s_2$.

As we have mentioned, the  homogeneous and semihomogeneous   systems are respectively the specific kind
 of the quasi-homogeneous and   semi-quasi homogeneous systems. The qualitative behaviours of  the homogeneous and semi-homogeneous polynomial differential systems has   attracted the interest of numerous
 people due to the   specific  forms of these systems. In fact, many authors have investigated  these systems
 from different aspects, such as  the structural stability \cite{28,44}, the phase portraits\cite{29,30,41,42,43}, the centers problem \cite{33,34,35,36,37},  the limit cycles \cite{45}, the first integrals\cite{10,32}, the  canonical forms\cite{40}, the algebraic classification\cite{38,39}, and so on.

The quasi-homogeneous polynomial differential systems have also gained wide attention since the beginning of the 21st century.
The result obtained   mainly   include the  integrability\cite{2, 3, 4, 1, 6, 7, 8},  polynomial and rational first integrability\cite{9, 10}, normal forms\cite{2},  centers\cite{14, 15, 16, 17, 18, 19}  and limit cycles\cite{20, 21, 22, 23}. Recently,  the authors of  \cite{1}  establish an algorithm for obtaining all the quasi-homogeneous but non-homogeneous polynomial   systems with a given degree.  This algorithm is very  helpful  to study many  problems of the quasi-homogeneous but non-homogeneous polynomial   systems. Indeed, several  work has been done by means of this algorithm,  see\cite{1,24,17,25,26}. In these papers,  after obtaining all the expressions of  quasi-homogeneous system of degree $2-6$,   the authors   investigate the dynamic behaviour of them in an    explicit way.

The next natural step is to study the  {\itshape PSQHPDS}, this is because not only it is     a generalization of semi-homogeneous and quasi-homogeneous system, but also it has a lot of applications  in the others fields. For example, the  {\itshape PSQHPDS} $\dot{\mathop{x}}\,=y,\  \dot{\mathop{y}}\,=x^3+y^2$, with minimal weight vector  $w_m=(2,3,2,4)$, is a Li\'{e}nard equation of the second type, which arises   from Newtonian considerations when the mass of a particle is
position dependent \cite{53,54}.  In 2013, Zhao \cite{27} study the periodic orbits  of     {\itshape PSQHPDS}. He   gave sufficient conditions for the nonexistence and existence of periodic orbits, and  a lower bound for the maximum number of limit cycles of such system.  However, to the best of our knowledge, there are not other  articles  consider the dynamic problem  of {\itshape PSQHPDS} except \cite{27}.

Just as shown by \cite{1,24,17,25,26},   a commendable way to investigate the polynomial differential systems is to obtain the explicit expressions of these systems.
Therefore, the problem of how to obtain  easily  the explicit  expressions of {\itshape PSQHPDS} deserve investigation.
In this paper, inspired by \cite{1}, we will establish an algebraic  algorithm to obtain all the expression of  {\itshape PSQHPDS} with a given degree.
 By contrast with  the quasi-homogeneous systems,   the weight vector of  {\itshape PSQHPDS} has four components $s_1,s_2,d_1$ and $d_2$, which are usually  unknown, the complexity
arise distinctly.  To overcome this complexity and to avoid the onerous discussion, we will introduce a quantity $\lambda=(d_1-d_2)/(s_1-s_2)$ (when $s_1\neq s_2$) and call it the index of  {\itshape PSQHPDS} (see Definition \ref{de-1} and Remark \ref{re-12} for the uniqueness of  $\lambda$). Obviously,
  the case that $\lambda=0$ corresponds to the    quasi-homogeneous polynomial systems.

Noting that the expression of semihomogeneous system with a given degree  can be
obtained very directly and very easily. Thus we will assume in this paper that \eqref{1} is not a semihomogeneous polynomial system. Moreover,
if the weight exponents of {\itshape PSQHPDS} \eqref{1} fulfill  $s_1=s_2$, then  system  \eqref{1} become a  semihomogeneous system
(see Proposition \ref{co-8} of Section \ref{se-2}).  Thus we only need to consider the situation that     $s_1\neq s_2$.   We  can further
assume that $s_1>s_2$, because   in the other case we could interchange the
variables $x$ and $y$. Therefore,  we would like to make  the following convention    without loss of generality. \\
{\bf Convention}. We always assume in this paper, unless otherwise specified, that

 (i) $(P,Q)$ is not a semihomogeneous vector field;

(ii) $s_1>s_2$; and

(iii) $P(x,y)\not\equiv 0$, $Q(x,y)\not\equiv 0$.

The last  part of this convention is natural because system \eqref{1} will become a scalar differential equations provided  $P(x,y)$ or $Q(x,y)$
vanish identically.

The rest of this paper is organized as follows.
Section \ref{se-2} is devoted to  studying  the algebraic properties of {\itshape PSQHPDS} \eqref{1}. The concept of index of  \eqref{1} is   introduced in the end of this section.   In  Section \ref{se-3}  we divide {\itshape PSQHPDS} into two classes  to determine  the forms  of their  weight vectors   respectively. In Section \ref{se-4},    we construct an algebraic algorithms  which can be used to obtain all the {\itshape PSQHPDS} as long as the  degree of system is given. This algorithm   contains three parts   which depend on the number of monomials appearing in $P$ and $Q$, where $(P,Q)$ is the vector field associated to {\itshape PSQHPDS} \eqref{1}.        Section \ref{se-5} is devoted to illustrating the application of our algorithm. To this end,  we  calculate  all the {\itshape PSQHPDS} of degree $2$ and $3$ step by step. Finally,  in Section \ref{se-6}, by studying  the canonical forms for the {\itshape PSQHPDS} obtained in Section \ref{se-5},  we  give the criteria for the existence  of centers for {\itshape PSQHPDS}  of degree $2$ and $3$.
\section{The properties of   PSQHPDS }\label{se-2}

From   relation   \eqref{2}, we can obtain several important properties which play a significant role to get the algorithm
of  {\itshape PSQHPDS}. Some of these properties are the extensions of the ones given in \cite{1} and the others are new.
These properties allows us to  discover the  structure of {\itshape PSQHPDS}. By using the structure, we can
express {\itshape PSQHPDS}  in a simple way.

\begin{lemma}\label{le-1}
If ${{w}_{m}}=\left( s_{1}^{*}, s_{2}^{*}, d_{1}^{*}, d_{2}^{*} \right)$ is a minimal weight vector of system \eqref{1}, then all the
vectors of the form $\left( rs_{1}^{*}, rs_{2}^{*}, r( d_{1}^{*}-1 )+1, r( d_{2}^{*}-1)+1 \right)$ , where $r$ is a positive integer, are weight vectors of \eqref{1}.
\end{lemma}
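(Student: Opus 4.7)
The plan is to verify the two scaling identities in the definition \eqref{2} for the candidate tuple by a change of parameter in the relations that hold for the minimal weight vector. Since $w_m=(s_1^*,s_2^*,d_1^*,d_2^*)$ is a weight vector, by definition we have, for every $\alpha\in\mathbb{R}^+$,
\begin{equation*}
P\bigl(\alpha^{s_1^*}x,\alpha^{s_2^*}y\bigr)=\alpha^{s_1^*-1+d_1^*}P(x,y),\qquad Q\bigl(\alpha^{s_1^*}x,\alpha^{s_2^*}y\bigr)=\alpha^{s_2^*-1+d_2^*}Q(x,y).
\end{equation*}
The key observation is that for any fixed positive integer $r$, the map $\alpha\mapsto\alpha^r$ is a bijection of $\mathbb{R}^+$ onto itself, so one may substitute $\alpha\leftarrow\alpha^r$ in the identities above without loss of generality.

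After this substitution, the left-hand sides become $P\bigl(\alpha^{rs_1^*}x,\alpha^{rs_2^*}y\bigr)$ and $Q\bigl(\alpha^{rs_1^*}x,\alpha^{rs_2^*}y\bigr)$, which are exactly the left-hand sides required for the candidate tuple $(rs_1^*,rs_2^*,r(d_1^*-1)+1,r(d_2^*-1)+1)$. The right-hand sides become $\alpha^{r(s_1^*-1+d_1^*)}P(x,y)$ and $\alpha^{r(s_2^*-1+d_2^*)}Q(x,y)$, so it only remains to check the exponent arithmetic. For the first component,
\begin{equation*}
rs_1^*-1+\bigl(r(d_1^*-1)+1\bigr)=rs_1^*+rd_1^*-r=r(s_1^*-1+d_1^*),
\end{equation*}
and an identical computation handles the second component. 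Hence the relations \eqref{2} hold for the new tuple, which is therefore a weight vector of \eqref{1}.

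There is no substantive obstacle here: the statement follows from a one-line reparametrisation combined with a trivial exponent identity. The only thing to be careful about is that $r$ must be a positive integer so that $r(d_i^*-1)+1$ lies in $\mathbb{N}^+$ (which is required by the definition of a weight vector); this is immediate since $d_i^*\in\mathbb{N}^+$ implies $d_i^*\geq 1$, so $r(d_i^*-1)+1\geq 1$.
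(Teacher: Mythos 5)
Your proof is correct and is essentially the same as the paper's: the paper also substitutes $\beta=\alpha^{r}$ (writing $\alpha=\beta^{1/r}$) into the scaling identities for $w_m$ and checks the exponent identity $rs_i^*-1+\bigl(r(d_i^*-1)+1\bigr)=r(s_i^*-1+d_i^*)$. No differences worth noting.
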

\begin{proof}
 From the definition of the weight vector,  for any arbitrary $\beta \in {\mathbb{R}^{+}}$, we have
 \[P\left( {{\beta }^{s_{1}^{*}}}x, {{\beta }^{s_{2}^{*}}}y \right)={{\beta }^{s_{1}^{*}-1+d_{1}^{*}}}P\left( x, y \right), \]
 and
 \[Q\left( {{\beta }^{s_{1}^{*}}}x, {{\beta }^{s_{2}^{*}}}y \right)={{\beta }^{s_{2}^{*}-1+d_{2}^{*}}}Q\left( x, y \right). \]
 \par Let ${{t}_{1}}=rs_{1}^{*}$ and ${{t}_{2}}=rs_{2}^{*}$. We deduce that
 \[P\left( {{\beta }^{\frac{{{t}_{1}}}{r}}}x, {{\beta }^{\frac{{{t}_{2}}}{r}}}y \right)={{\beta }^{\frac{1}{r}\left[ {{t}_{1}}\text{+}r\left( d_{1}^{*}-1 \right)+1-1 \right]}}P\left( x, y \right). \]
 Setting $\alpha ={{\beta }^{\frac{1}{r}}}$, we obtain that
 \begin{equation}\label{3}
 P\left( {{\alpha }^{rs_{1}^{*}}}x, {{\alpha }^{rs_{2}^{*}}}y \right)={{\alpha }^{rs_{1}^{*}-1+r( d_{1}^{*}-1 )+1}}P\left( x, y \right).
 \end{equation}

 In a similar way, we can get that
 \begin{equation}\label{4}
 Q\left( {{\alpha }^{rs_{1}^{*}}}x, {{\alpha }^{rs_{2}^{*}}}y \right)={{\alpha }^{rs_{2}^{*}-1+r( d_{2}^{*}-1)+1}}Q\left( x, y \right).
 \end{equation}

  By \eqref{3}, \eqref{4} and the definition of   weight vector, the conclusion holds.
\end{proof}
\begin{remark}\label{re-1}
For the general semi-quasi homogeneous system, the  converse of Lemma \ref{le-1} is not true. As a counterexample, one can consider system $\dot{\mathop{x}}\, ={{a}_{1, 1}}xy$,  $\dot{\mathop{y}}\, ={{b}_{0, 1}}y$.
It is easy to check that ${{w}_{m}}=\left( 1, 1, 2, 1 \right)$ is the minimal weight vector of this system. However, there exist a weight vector   $w=\left( 3, 2, 3, 1 \right)$    of this system which   can't be written in  the
form $(r,r,r+1,1)$.
\end{remark}

 In order to observe the  properties of the coefficients of {\itshape PSQHPDS},
we write the polynomials $P$ and $Q$ of system \eqref{1} as the sum of their  homogeneous parts.
\begin{equation}\label{5}
P\left( x, y \right)=\sum\limits_{j=0}^{l}{{{P}_{j}}\left( x, y \right)},  \quad \text{where} \quad{{P}_{j}}\left( x, y \right)=\sum\limits_{i=0}^{j}{{{a}_{i, j-i}}}{{x}^{i}}{{y}^{j-i}},
\end{equation}
and
\begin{equation}\label{6}
Q\left( x, y \right)=\sum\limits_{j=0}^{m}{{{Q}_{j}}\left( x, y \right)},  \quad \text{where} \quad{{Q}_{j}}\left( x, y \right)=\sum\limits_{i=0}^{j}{{{b}_{i, j-i}}}{{x}^{i}}{{y}^{j-i}},
\end{equation}
where $l=\mbox{deg}(P)$, $m=\mbox{deg}(Q)$. Substituting \eqref{5} and \eqref{6} into \eqref{2}, we obtain that
\begin{equation}\label{7}
{{a}_{i, j-i}}{{\alpha }^{\left( i-1 \right){{s}_{1}}+\left( j-i \right)s{}_{2}-\left( {{d}_{1}}-1 \right)}}={{a}_{i, j-i}},
\end{equation}
and
\begin{equation}\label{8}
{{b}_{i, j-i}}{{\alpha }^{i{{s}_{1}}+\left( j-i-1 \right)s{}_{2}-\left( {{d}_{2}}-1 \right)}}={{b}_{i, j-i}}.
\end{equation}

\begin{proposition}\label{pr-1}
If   system \eqref{1} is a  {\itshape PSQHPDS} with weight vector $w=\left( s_{1}^{{}}, s_{2}^{{}}, d_{1}^{{}}, d_{2}^{{}} \right)$, then the following statements hold.
\begin{itemize}
\item[a)]${{P}_{0}}={{Q}_{0}}=0$.
\item[b)]If $d_{1}^{{}}>1$, $d_{2}^{{}}>1$, then ${{a}_{1, 0}}={{b}_{0, 1}}=0$.
\item[c)]If ${{s}_{0}}$ is the greatest common divisor of $s_{1}$ and $s_{2}$,  then ${{s}_{0}} \mid\left( {{d}_{i}}-1 \right)$ for $i=1, 2$.
 \end{itemize}
\end{proposition}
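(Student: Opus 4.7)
The plan is to read each of the three statements directly off identities \eqref{7} and \eqref{8}, using the elementary fact that if $c\,\alpha^{k}=c$ holds for every $\alpha\in\mathbb{R}^{+}$, then either $c=0$ or $k=0$. Under this observation, each nonzero coefficient $a_{i,j-i}$ (resp.\ $b_{i,j-i}$) forces a linear Diophantine relation among $s_1,s_2,d_1,d_2$, while a nonvanishing exponent forces the coefficient to be zero.

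For part (a), I would specialize \eqref{7} and \eqref{8} to $i=j=0$. The resulting exponents are $-s_{1}-(d_{1}-1)$ and $-s_{2}-(d_{2}-1)$, both strictly negative because $s_{1},s_{2},d_{1},d_{2}\in\mathbb{N}^{+}$. Hence $a_{0,0}=b_{0,0}=0$, and since $P_{0}(x,y)=a_{0,0}$ and $Q_{0}(x,y)=b_{0,0}$ are constants, this gives $P_{0}=Q_{0}=0$.

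For part (b), I would specialize \eqref{7} to $(i,j)=(1,1)$ and \eqref{8} to $(i,j)=(0,1)$, producing
\[a_{1,0}\,\alpha^{-(d_{1}-1)}=a_{1,0},\qquad b_{0,1}\,\alpha^{-(d_{2}-1)}=b_{0,1}.\]
Under the hypothesis $d_{1},d_{2}>1$ both exponents are strictly negative, forcing $a_{1,0}=b_{0,1}=0$.

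For part (c), I would invoke convention (iii): since $P\not\equiv 0$ and $Q\not\equiv 0$, at least one coefficient $a_{i,j-i}$ and at least one coefficient $b_{i,j-i}$ are nonzero. For each such pair, the $\alpha$-exponent in \eqref{7}, respectively \eqref{8}, must vanish, yielding
\[(i-1)s_{1}+(j-i)s_{2}=d_{1}-1,\qquad i\,s_{1}+(j-i-1)s_{2}=d_{2}-1.\]
Since $s_{0}\mid s_{1}$ and $s_{0}\mid s_{2}$, each left-hand side is divisible by $s_{0}$, hence so is each right-hand side, proving $s_{0}\mid(d_{i}-1)$ for $i=1,2$. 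There is no genuine obstacle beyond bookkeeping; the main point is to pick the correct index pair $(i,j)$ for each coefficient one wishes to kill, and to remember that the existence of a nonzero coefficient in part (c) is guaranteed precisely by convention (iii).
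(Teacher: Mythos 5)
Your proposal is correct and follows essentially the same route as the paper: parts (a) and (b) are read directly off the exponents in \eqref{7} and \eqref{8} (which the paper leaves as "easily proved"), and part (c) rests on the same divisibility observation, only stated directly from a nonzero coefficient guaranteed by convention (iii) rather than by the paper's contrapositive argument that $s_0\nmid(d_1-1)$ would force $P\equiv 0$.
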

\begin{proof}
By equation \eqref{7} and \eqref{8}, statement $a)$ and $b)$ can be easily proved.

Let's prove  $c)$. If ${{s}_{0}} \mid\left( {{d}_{1}}-1 \right)$ is not fulfilled,  then equation $\left( i-1 \right){{s}_{1}}+\left( j-i \right){{s}_{2}}={{d}_{1}}-1$
has not solution of non-negative integer, where $i$ and $j$ are unknowns. By   \eqref{7}, we have ${{a}_{i, j-i}}=0$.
This implies that $P\left( x, y \right)\equiv 0$,  a contradiction. Hence ${{s}_{0}} \mid\left( {{d}_{1}}-1 \right)$.
Similarly, we can prove that ${{s}_{0}} \mid\left( {{d}_{2}}-1 \right)$.
\end{proof}
\begin{corollary}\label{co-5}
If ${{w}_{m}}=\left( s_{1}^{*}, s_{2}^{*}, d_{1}^{*}, d_{2}^{*} \right)$ is a minimal weight vector of \eqref{1}, then $s_{1}^{*}$, $s_{2}^{*}$ are coprime.
\end{corollary}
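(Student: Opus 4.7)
The plan is to argue by contradiction. Suppose that $s_0 := \gcd(s_1^*, s_2^*) > 1$. I want to produce another weight vector of system \eqref{1} whose first (and second) components are strictly smaller than $s_1^*$ and $s_2^*$, directly contradicting the minimality of $w_m$.

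First I would invoke Proposition \ref{pr-1}$\,c)$, which tells us that $s_0 \mid (d_1^* - 1)$ and $s_0 \mid (d_2^* - 1)$. Hence the four quantities
\[
t_1 := \frac{s_1^*}{s_0}, \qquad t_2 := \frac{s_2^*}{s_0}, \qquad e_1 := \frac{d_1^* - 1}{s_0} + 1, \qquad e_2 := \frac{d_2^* - 1}{s_0} + 1
\]
are all positive integers. The key step is then to substitute $\beta = \alpha^{s_0}$ in the defining identities
\[
P(\alpha^{s_1^*} x, \alpha^{s_2^*} y) = \alpha^{s_1^* - 1 + d_1^*} P(x,y), \qquad Q(\alpha^{s_1^*} x, \alpha^{s_2^*} y) = \alpha^{s_2^* - 1 + d_2^*} Q(x,y),
\]
which, after rewriting the exponents using $s_i^* = s_0 t_i$ and $d_i^* - 1 = s_0 (e_i - 1)$, transform into
\[
P(\beta^{t_1} x, \beta^{t_2} y) = \beta^{t_1 - 1 + e_1} P(x,y), \qquad Q(\beta^{t_1} x, \beta^{t_2} y) = \beta^{t_2 - 1 + e_2} Q(x,y).
\]
Since $\beta = \alpha^{s_0}$ ranges over all of $\mathbb{R}^{+}$ as $\alpha$ does, this shows that $w' := (t_1, t_2, e_1, e_2)$ is a genuine weight vector of \eqref{1}.

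Finally I would conclude: because $s_0 \geq 2$ and $s_1^*, s_2^* \geq 1$, we have $t_1 = s_1^*/s_0 < s_1^*$ (and likewise $t_2 < s_2^*$). Hence $w'$ violates the minimality condition $s_1 \geq s_1^*$ in the definition of $w_m$, a contradiction. Therefore $s_0 = 1$, i.e. $s_1^*$ and $s_2^*$ are coprime. I do not anticipate a serious obstacle here; the only point that requires a little care is verifying that $e_1, e_2$ are genuinely positive integers (the divisibility supplied by Proposition \ref{pr-1}$\,c)$ is exactly what guarantees this, and the $+1$ shift ensures positivity even when $d_i^* = 1$).
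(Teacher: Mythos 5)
Your argument is correct and is essentially identical to the paper's own proof: both invoke Proposition \ref{pr-1}$\,c)$ to get the divisibility of $d_i^*-1$ by $s_0$, perform the same change of scaling parameter to exhibit $(s_1^*/s_0,\, s_2^*/s_0,\, (d_1^*-1)/s_0+1,\, (d_2^*-1)/s_0+1)$ as a weight vector, and then contradict minimality. No issues.
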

\begin{proof}
Let $s_{0}^{*}$ be a greatest common divisor of $s_{1}^{*}$ and $s_{2}^{*}$. Then there exist two positive integers
$\tilde{s}_{1}$ and $\tilde{s}_{2}$ such that $s_{1}^{*}=s_{0}^{*}\tilde{s}_{1}$ and $s_{2}^{*}=s_{0}^{*}\tilde{s}_{2}$.
From Proposition \ref{pr-1}, $s_{0}^{*}$ divides both $d_{1}^{*}-1$ and $d_{2}^{*}-1$, that is,
there exist two non-negative integers $\tilde{d}_{1}$ and $\tilde{d}_{2}$ such that $d_{1}^{*}-1=s_{0}^{*}\tilde{d}_{1}$ and $d_{2}^{*}-1=s_{0}^{*}\tilde{d}_{2}$.

From the definition of the weight vector,   for any arbitrary $\beta \in {\mathbb{R}^{+}}$, we have
 \[P\left( {{\beta }^{s_{1}^{*}}}x, {{\beta }^{s_{2}^{*}}}y \right)={{\beta }^{s_{1}^{*}-1+d_{1}^{*}}}P\left( x, y \right)={\beta}^{s_{0}^{*}(\tilde{s}_{1}-1+(\tilde{d}_{1}+1))}P(x, y), \]
 and
 \[Q\left( {{\beta }^{s_{1}^{*}}}x, {{\beta }^{s_{2}^{*}}}y \right)={{\beta }^{s_{2}^{*}-1+d_{2}^{*}}}Q\left( x, y \right)={\beta}^{s_{0}^{*}(\tilde{s}_{2}-1+(\tilde{d}_{2}+1))}Q(x, y). \]
Taking $\alpha={\beta}^{s_{0}^{*}}$, it turns out  from the above equalities that $w=(\tilde{s}_{1}, \tilde{s}_{2}, \tilde{d}_{1}+1, \tilde{d}_{2}+1)$ is a weight vector of \eqref{1}. If $s_{0}^{*}>1$, then $\tilde{s}_{1}<s_{1}^{*}$ and $\tilde{s}_{2}<s_{2}^{*}$. This   contradicts  with our hypothesis that $w_{m}$ is a minimal weight vector of \eqref{1}. The proof is finished.
\end{proof}

\begin{remark}\label{re-4}
For the general semi-quasi homogeneous system, the converse of Corollary \ref{co-5} is not true. For instance, the system of Remark \ref{re-1} has a weight vector $w=(3,2,3,1)$, where
$3$ and $2$ are coprime number, but $w$  is not a minimal weight vector.
\end{remark}

From Proposition \ref{pr-1}, we have the following   apparent conclusion.
\begin{corollary}\label{co-1}
If $m=l=1$, then    {\itshape PSQHPDS} \eqref{1} is a homogeneous linear differential system.
\end{corollary}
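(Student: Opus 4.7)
The plan is to combine the degree hypotheses $l=m=1$ with Proposition \ref{pr-1}(a) to pin down the form of $P$ and $Q$ directly. Since $l=\deg(P)=1$ and $m=\deg(Q)=1$, the decompositions \eqref{5} and \eqref{6} reduce to
\[
P(x,y)=a_{0,0}+a_{1,0}x+a_{0,1}y,\qquad Q(x,y)=b_{0,0}+b_{1,0}x+b_{0,1}y.
\]

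Next I would invoke Proposition \ref{pr-1}(a), which asserts that for any {\itshape PSQHPDS} the zero-degree homogeneous components satisfy $P_{0}=Q_{0}=0$. Translated to the coefficients above, this forces $a_{0,0}=b_{0,0}=0$. Hence system \eqref{1} takes the shape
\[
\dot x=a_{1,0}x+a_{0,1}y,\qquad \dot y=b_{1,0}x+b_{0,1}y,
\]
in which both $P$ and $Q$ are homogeneous polynomials of degree $1$. By definition this is a homogeneous linear differential system, which is exactly the conclusion of the corollary.

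There is essentially no obstacle: the whole argument is an immediate consequence of Proposition \ref{pr-1}(a) once the degree restriction $l=m=1$ is used to exhibit all possible monomials in $P$ and $Q$. The only point worth pausing on is that one does not need to invoke parts (b) or (c) of Proposition \ref{pr-1}, nor any of the constraints \eqref{7}--\eqref{8} on the weight exponents; the annihilation of the constant terms alone is enough to force homogeneity of degree $1$.
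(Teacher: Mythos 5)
Your proposal is correct and follows exactly the route the paper intends: the paper states this corollary as an "apparent conclusion" of Proposition \ref{pr-1}, and your argument simply makes that explicit by writing out the degree-one decompositions of $P$ and $Q$ and killing the constant terms via Proposition \ref{pr-1}(a). Nothing further is needed.
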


 Based on Corollary \ref{co-1}, in what follows, we can suppose that the degree of the system is $n\geq2$.

\begin{corollary}\label{co-8}
  If   system \eqref{1} is {\itshape PSQHPDS} with weight vector $w=\left( s_{1}^{{}}, s_{2}^{{}}, d_{1}^{{}}, d_{2}^{{}} \right)$, then it is a semihomogeneous differential system as long as $s_1=s_2$.
\end{corollary}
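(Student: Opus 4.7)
The plan is to directly exhibit a weight vector with $s_1=s_2=1$, which by definition makes the system semihomogeneous. Set $s=s_1=s_2$, so the defining relations \eqref{2} read
\begin{equation*}
P(\alpha^{s}x,\alpha^{s}y)=\alpha^{s-1+d_1}P(x,y),\qquad Q(\alpha^{s}x,\alpha^{s}y)=\alpha^{s-1+d_2}Q(x,y),\quad \alpha\in\mathbb{R}^{+}.
\end{equation*}
First I would introduce the substitution $\beta=\alpha^{s}$, which is a bijection of $\mathbb{R}^{+}$ onto itself, yielding
\begin{equation*}
P(\beta x,\beta y)=\beta^{(s-1+d_1)/s}P(x,y),\qquad Q(\beta x,\beta y)=\beta^{(s-1+d_2)/s}Q(x,y).
\end{equation*}

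Next I would check that the exponents are positive integers. Since $s_1=s_2=s$, the greatest common divisor $s_0$ of $s_1$ and $s_2$ equals $s$, so Proposition \ref{pr-1}(c) gives $s\mid (d_i-1)$ for $i=1,2$. Writing $d_i-1=s\,k_i$ with $k_i\in\mathbb{N}\cup\{0\}$, the exponents simplify to
\begin{equation*}
\frac{s-1+d_i}{s}=\frac{s+s\,k_i}{s}=k_i+1=:d_i',\qquad i=1,2,
\end{equation*}
and each $d_i'$ is a positive integer. The hypothesis $d_1\neq d_2$ immediately forces $k_1\neq k_2$, whence $d_1'\neq d_2'$, so the non-triviality condition on the weight degrees in the PSQHPDS definition is preserved.

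Finally I would conclude by reading off the weight vector: the two displayed identities say
\begin{equation*}
P(\beta^{1}x,\beta^{1}y)=\beta^{1-1+d_1'}P(x,y),\qquad Q(\beta^{1}x,\beta^{1}y)=\beta^{1-1+d_2'}Q(x,y),
\end{equation*}
so $w'=(1,1,d_1',d_2')$ is a weight vector of \eqref{1} with $d_1'\neq d_2'$, which is exactly the defining condition for a semihomogeneous system stated after \eqref{2}. The argument is essentially computational; the only non-routine ingredient is the divisibility $s\mid(d_i-1)$, which is precisely what Proposition \ref{pr-1}(c) supplies, so no genuine obstacle arises beyond invoking that result.
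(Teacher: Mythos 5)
Your proposal is correct and follows essentially the same route as the paper's own proof: both substitute $\beta=\alpha^{s}$, invoke Proposition \ref{pr-1}(c) to write $d_i-1=s\,k_i$, and read off the weight vector $(1,1,k_1+1,k_2+1)$. The only addition is your explicit check that $d_1\neq d_2$ forces $k_1\neq k_2$, which the paper leaves implicit.
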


\begin{proof}
   Let $s_1=s_2=s_0$. Then, from the Proposition \ref{pr-1}, one can obtain that there exist two non-negative integers $\tilde{d}_{1}$ and $\tilde{d}_{2}$ such that $d_{1}-1=s_{0}\tilde{d}_{1}$ and $d_{2}-1=s_{0}\tilde{d}_{2}$. By definition of the weight vector, for any arbitrary $\beta\in\mathbb{R}^+$, we get
   $$P(\beta^{s_{0}}x,\beta^{s_{0}}y)=\beta^{s_0-1+d_1}P(x,y)=\beta^{s_0(1+\tilde{d}_1)}P(x,y),$$
   and
   $$Q(\beta^{s_0}x,\beta^{s_0}y)=\beta^{s_0-1+d_2}Q(x,y)=\beta^{s_0(1+\tilde{d}_2)}Q(x,y).$$
Letting $\alpha=\beta^{s_0}$, the above equations become
  $$P(\alpha x,\alpha y)=\alpha^{1+\tilde{d}_1}P(x,y),\ \mbox{and}\ \
  Q(\alpha x, \alpha y)=\alpha^{1+\tilde{d}_2}Q(x,y).$$

  Thus the result is proved.
\end{proof}
\begin{proposition}\label{pr-2}
Suppose that  ${{w}_{{}}}=\left( s_{1}^{{}}, s_{2}^{{}}, d_{1}^{{}}, d_{2}^{{}} \right)$ is a weight vector of
{\itshape PSQHPDS} \eqref{1}, then
\begin{itemize}
\item[a)] each homogeneous part of $P$ and $Q$ has at most one nonzero monomial;
\item[b)] there is a unique $q\in \left\{ 0, 1, \cdots , m \right\}$ such that ${{b}_{q, m-q}}\ne 0$ and
a unique $p\in \left\{ 0, 1, \cdots , l \right\}$ such that ${{a}_{p, l-p}}\ne 0$.\end{itemize}
\end{proposition}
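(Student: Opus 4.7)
The plan is to extract both parts of the proposition from the monomial equations \eqref{7} and \eqref{8}, together with the standing convention $s_1>s_2$.

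First I would settle part (a). Observe that \eqref{7} forces every nonzero coefficient $a_{i,j-i}$ to satisfy the linear relation $(i-1)s_1+(j-i)s_2=d_1-1$, and similarly \eqref{8} forces every nonzero $b_{i,j-i}$ to satisfy $is_1+(j-i-1)s_2=d_2-1$. Fix a homogeneous part $P_j$ of $P$, and suppose two distinct monomials $a_{i_1,j-i_1}x^{i_1}y^{j-i_1}$ and $a_{i_2,j-i_2}x^{i_2}y^{j-i_2}$ in $P_j$ are both nonzero (so $i_1\neq i_2$). Subtracting the two corresponding instances of the relation above yields $(i_1-i_2)(s_1-s_2)=0$. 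Since $i_1\neq i_2$, this forces $s_1=s_2$, contradicting our convention that $s_1>s_2$. The same argument applied to \eqref{8} handles each homogeneous part $Q_j$ of $Q$. Hence every $P_j$ and every $Q_j$ contains at most one nonzero monomial.

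For part (b), I would simply combine (a) with the definitions of $l$ and $m$. By $l=\deg(P)$ the top homogeneous part $P_l$ is not identically zero, so there exists at least one $p\in\{0,1,\dots,l\}$ with $a_{p,l-p}\neq 0$; by part (a) applied to $P_l$, such a $p$ must be unique. The same reasoning applied to $Q_m$ gives the unique $q\in\{0,1,\dots,m\}$ with $b_{q,m-q}\neq 0$.

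There is no real obstacle here: the argument is a one-line algebraic identity enforced by the weighted-homogeneity condition, and the convention $s_1>s_2$ does all the heavy lifting. The only subtlety worth flagging in the write-up is that without this convention (and with $s_1=s_2$ instead), one genuinely can have several monomials of the same total degree lying on the same weighted level, which is exactly why the semihomogeneous case must be excluded, as was done in Corollary \ref{co-8} and the Convention.
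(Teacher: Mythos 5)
Your proof is correct and follows essentially the same route as the paper: both arguments subtract the two instances of the weighted-homogeneity relation coming from \eqref{7} (resp.\ \eqref{8}) to obtain $(i_1-i_2)(s_1-s_2)=0$ and then invoke $s_1\neq s_2$, and both deduce part (b) from part (a) together with the fact that $P_l$ and $Q_m$ are nonzero by the definition of degree. No further comment is needed.
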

\begin{proof} Recall that $Q_j$ is the homogeneous part of $Q$ of degree $j$. Suppose that   there exist
 $q_{1}, q_{2}\in  \{ 1, \cdots , j  \}$ ($j\geq 1$) such that ${b}_{q_1, j-q_1}{b}_{q_2, j-q_2}\ne 0$. From
   \eqref{8}  we find
\begin{equation*}
{{q}_{1}}{{s}_{1}}+\left( j-{{q}_{1}}-1 \right){{s}_{2}}={{d}_{2}}-1,
\end{equation*}
and
\begin{equation*}
{{q}_{2}}{{s}_{1}}+\left( j-{{q}_{2}}-1 \right){{s}_{2}}={{d}_{2}}-1.
\end{equation*}
Thus
\[( q_1-q_2)( {{s}_{1}}-{{s}_{2}})=0. \]
This implies that   $q_1=q_2$. Therefore,  $Q_j$ can have at most one nonzero monomial.
  Similarly, each  homogeneous part of $P$ can have at most one nonzero monomial. Thus statement $a)$ is true.

  Conclusion $b)$ follows from     conclusion  $a)$ directly because $l$ and $m$ are respectively the degree of
polynomial $P$ and $Q$.
\end{proof}

The next proposition will show the relationship between the nonzero monomials of ${{P}_{j}}$ and ${{Q}_{j}}$.
\begin{proposition}\label{pr-3}Suppose that  ${{w}_{{}}}=\left( s_{1}^{{}}, s_{2}^{{}}, d_{1}^{{}}, d_{2}^{{}} \right)$ is a weight vector of
{\itshape PSQHPDS} \eqref{1},  then for arbitrary $j\in \{ 1, \cdots , \min\{ l, m \} \}$
and $p, q\in \{ 1, \cdots, j \}$, we have the following conclusion.
\begin{itemize}
\item[a)]If ${{b}_{q, j-q}}\ne 0$, then ${{a}_{i, j-i}}=0$ provided  $i\ne q+1+\lambda $ and $i\le j$.
\item[b)]If ${{a}_{p, j-p}}\ne 0$, then ${{b}_{i, j-i}}=0$ provided $i\ne p-1-\lambda $ and $i\le j$.
\end{itemize}
Here $\lambda =(d_1-d_2)/(s_1-s_2)$.
\end{proposition}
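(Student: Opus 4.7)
The plan is to leverage the monomial identities \eqref{7} and \eqref{8} directly: the hypothesis that a monomial's coefficient is nonzero forces its exponent equation to hold, and comparing two such equations produces a single linear constraint that isolates $\lambda=(d_1-d_2)/(s_1-s_2)$. Both statements (a) and (b) will follow from the same one-line algebraic manipulation, so I would treat them in parallel.

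For statement (a), I would start by fixing $j$ and assuming $b_{q,j-q}\ne 0$. By \eqref{8} applied to this monomial, one obtains the relation
\[
q s_1 + (j-q-1)s_2 = d_2-1.
\]
Now I would argue by contrapositive: suppose some $a_{i,j-i}\ne 0$ with $i\le j$. Then \eqref{7} yields
\[
(i-1)s_1 + (j-i)s_2 = d_1-1.
\]
Subtracting the $b$-equation from the $a$-equation eliminates $j$ cleanly, and a short simplification gives $(i-1-q)(s_1-s_2) = d_1 - d_2$. Since $s_1>s_2$ by our standing convention, I can divide to conclude $i = q+1+\lambda$. Contrapositively, whenever $i\ne q+1+\lambda$ one must have $a_{i,j-i}=0$, which is exactly (a).

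Statement (b) is entirely symmetric: assume $a_{p,j-p}\ne 0$, use \eqref{7} to record the exponent identity, and then subtract it from \eqref{8} applied to any nonzero $b_{i,j-i}$. The same cancellation produces $(p-1-i)(s_1-s_2)=d_1-d_2$, whence $i = p-1-\lambda$, and the contrapositive finishes the proof.

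There is really no serious obstacle here; the argument is just bookkeeping with two linear exponent equations. The only thing worth flagging is that the division by $s_1-s_2$ is legitimate precisely because we have excluded the $s_1=s_2$ case via Corollary \ref{co-8} and our Convention, and that $\lambda$ is therefore well-defined. It is also worth noting that the proposition does not claim $q+1+\lambda$ or $p-1-\lambda$ lies in $\{0,\dots,j\}$ or even is a nonnegative integer --- it only rules out every other index, which is exactly what the subtraction of the two exponent equations delivers.
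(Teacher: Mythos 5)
Your proof is correct and follows essentially the same route as the paper's: both apply \eqref{7} and \eqref{8} to the two nonzero coefficients, subtract the resulting exponent identities to obtain $(i-1-q)(s_1-s_2)=d_1-d_2$, and divide by $s_1-s_2>0$ to force $i=q+1+\lambda$, with part (b) handled symmetrically. The remark that the conclusion only excludes indices $i\ne q+1+\lambda$ (rather than asserting that such an index exists) is a fair and accurate reading of the statement.
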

\begin{proof}
Suppose that ${{b}_{q, j-q}}\ne 0$. If ${{a}_{i, j-i}}\neq0$,   then by applying \eqref{7} and \eqref{8},
we get that $\left( q-i+1 \right)\left( {{s}_{1}}-{{s}_{2}} \right)={{d}_{2}}-{{d}_{1}}$.
Since ${{s}_{1}}>{{s}_{2}}$,   $i=q+1+(d_1-d_2)/(s_1-s_2)=q+1+\lambda $, a contradiction.
  Therefore,  statement $a)$ holds.  By a similar way,  one can prove that statement $b)$ also holds.
\end{proof}

From Proposition \ref{pr-2} and Proposition \ref{pr-3}, we can grasp the rough  structure of  {\itshape PSQHPDS} \eqref{1}.
  For convenience,  denote the homogeneous parts of $P$ by $P_{n-t}$,  $t=0, 1, . . . , n$. According to Proposition \ref{pr-2},  if $P_{n-t}\not\equiv 0$, then there exist a unique nonnegative integer $i^{*}$ and $i^{*}\leq n-t$ such that the coefficient
  ${{a}_{{{i}^{*}}, n-t-{{i}^{*}}}}\ne 0$. Similarly,  if $Q_{n-t}\not\equiv0$, then  there exist a unique nonnegative integer $j^{*}$ and $j^{*}\leq n-t$
  such that the coefficient ${{b}_{{{j}^{*}}, n-t-{{j}^{*}}}}\ne 0$.
Let $X_{n-t}$ denote the homogeneous part of $X$ of degree $n-t$, where $X=(P, Q)$ is the vector field associated to  {\itshape PSQHPDS} \eqref{1}.
 In other words, $X_{n-t}=(P_{n-t}, Q_{n-t})$.  On account of the above discussion,  we obtain that $X_{n-t}$ is one of the following expressions:
\begin{itemize}
\item[$a)$]${{X}_{n-t}}\equiv 0$.
\item[$b)$]If $X_{n-t}\not\equiv0$, then $X_{n-t}$ is one of the following
\begin{enumerate}[$b. 1)$]
\item${{X}_{n-t}}=\left( {{a}_{{{i}^{*}}, n-t-{{i}^{*}}}}{{x}^{{{i}^{*}}}}{{y}^{n-t-{{i}^{*}}}}, 0 \right)$,
\item${{X}_{n-t}}=\left( 0, {{b}_{{{j}^{*}}, n-t-{{j}^{*}}}}{{x}^{{{j}^{*}}}}{{y}^{n-t-{{j}^{*}}}} \right)$,
\item${{X}_{n-t}}=\left({{a}_{{{i}^{*}}, n-t-{{i}^{*}}}}{{x}^{{{i}^{*}}}}{{y}^{n-t-{{i}^{*}}}}, {{b}_{{{j}^{*}}, n-t-{{j}^{*}}}}{{x}^{{{j}^{*}}}}{{y}^{n-t-{{j}^{*}}}} \right)$.
\end{enumerate}
\end{itemize}
\par Now define the following vector field:
\begin{align*}
  &A_{n-t}=\begin{cases}
&( 0, 0 )\quad \text{if the first component of} \;{{X}_{n-t}}\; \text{is zero},\\
&\left( {{a}_{{{i}^{*}}, n-t-{{i}^{*}}}}{{x}^{{{i}^{*}}}}{{y}^{n-t-{{i}^{*}}}}, 0 \right) \quad \text{if the first component of} \;{{X}_{n-t}}\; \text{is nonzero};
  \end{cases}\\
  &B_{n-t}=\begin{cases}
    &\left( 0, 0 \right)\quad \text{if the second component of} \;{{X}_{n-t}}\; \text{is zero},   \\
   &\left( 0, {{b}_{{{j}^{*}}, n-t-{{j}^{*}}}}{{x}^{{{j}^{*}}}}{{y}^{n-t-{{j}^{*}}}} \right)\quad \text{if the second component of} \;{{X}_{n-t}}\; \text{is nonzero}.
  \end{cases}
\end{align*}
From the definition of $A_{n-t}$ and $B_{n-t}$, $X_{n-t}$ can be represented by ${{X}_{n-t}}\text{=}{{A}_{n-t}}\text{+}{{B}_{n-t}}$.  By Proposition \ref{pr-3} we know that if ${{A}_{n-t}}\not\equiv0$ , ${{B}_{n-t}}\not\equiv0$, then $\lambda \in \mathbb{Z}$ and  ${{j}^{*}}={{i}^{*}}-1-\lambda $.
Thus, we can get the following conclusion, as a corollary of Proposition \ref{pr-3}, easily.
\begin{corollary}\label{co-2}
Suppose that  system \eqref{1} is a  {\itshape PSQHPDS} with weight vector $w=\left( s_{1}^{{}}, s_{2}, d_{1}, d_{2} \right)$ and that
  $\lambda=(d_1-d_2)/(s_1-s_2) \in \mathbb{Q}\backslash \mathbb{Z}$, then we have
\begin{itemize}
\item[a)] if ${{X}_{n-t}}={{A}_{n-t}}+{{B}_{n-t}}\not\equiv0$,
then one of ${{A}_{n-t}}$ and $B_{n-t}$ is zero;
\item[b)]  $l\ne m$ and  the degree of system \eqref{1} is $n\ge 2$.
\end{itemize}
\end{corollary}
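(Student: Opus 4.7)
The plan is to prove the two parts in order, using Proposition \ref{pr-3} as the main tool and arguing by contradiction in each case. Part (a) is the core claim; part (b) then follows by specializing (a) to the highest homogeneous component.

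For part (a), I would assume for contradiction that $X_{n-t}\not\equiv 0$ while both $A_{n-t}$ and $B_{n-t}$ are nonzero. Write the unique nonzero monomials guaranteed by Proposition \ref{pr-2} as $A_{n-t}=(a_{i^*,n-t-i^*}x^{i^*}y^{n-t-i^*},0)$ and $B_{n-t}=(0,b_{j^*,n-t-j^*}x^{j^*}y^{n-t-j^*})$. Since both components are nonzero we need $n-t\le \min\{l,m\}$, and the case $n-t=0$ is excluded by Proposition \ref{pr-1}(a) (which forces $P_0=Q_0=0$), so $n-t\in\{1,\dots,\min\{l,m\}\}$ and Proposition \ref{pr-3} applies. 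Applying part (a) of that proposition with $j=n-t$, $q=j^*$ and $i=i^*$ yields $i^*=j^*+1+\lambda$. Since $i^*,j^*\in\mathbb{Z}$, this forces $\lambda\in\mathbb{Z}$, contradicting the hypothesis $\lambda\in\mathbb{Q}\setminus\mathbb{Z}$. Hence at most one of $A_{n-t}$ and $B_{n-t}$ can be nonzero.

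For part (b), I would first derive $l\neq m$. Suppose $l=m$; then by the definition of degree both $P_l\not\equiv 0$ and $Q_l\not\equiv 0$, so by Proposition \ref{pr-2}(b) each contains a unique nonzero monomial, making both $A_l$ and $B_l$ nonzero simultaneously. This contradicts part (a) (with $t=n-l$), so $l\neq m$. To obtain $n\ge 2$, I would argue that $n=\max\{l,m\}\ge 1$ by Corollary \ref{co-1} and the Convention; if $n=1$, then from $l\neq m$ one of $l,m$ must be zero, meaning the corresponding polynomial lies in its degree-$0$ part, which by Proposition \ref{pr-1}(a) is identically zero. This contradicts Convention (iii) that $P\not\equiv 0$ and $Q\not\equiv 0$. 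Therefore $n\ge 2$.

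I do not anticipate any real obstacle: both parts are essentially direct bookkeeping once Proposition \ref{pr-3} is in hand. The mildly delicate point is to be careful that Proposition \ref{pr-3} requires the index $j\ge 1$, which is why the case $n-t=0$ must be handled separately via Proposition \ref{pr-1}(a) before invoking it; and that the degree-$1$ exclusion in part (b) must use Convention (iii) rather than Corollary \ref{co-1}, since Corollary \ref{co-1} only rules out the symmetric case $l=m=1$.
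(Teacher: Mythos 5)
Your proof is correct and follows essentially the same route as the paper: part (a) is exactly the observation, made in the discussion immediately preceding the corollary via Proposition \ref{pr-3}, that coexistence of nonzero monomials in $A_{n-t}$ and $B_{n-t}$ forces $\lambda=i^{*}-j^{*}-1\in\mathbb{Z}$, and part (b) derives $l\neq m$ by the same contradiction applied to the top-degree homogeneous component. You are in fact slightly more thorough than the paper, which dismisses (a) as ``obvious'' and does not spell out the $n\ge 2$ claim inside the proof; your exclusion of the degree-one case via Proposition \ref{pr-1}(a) together with Convention (iii) is a valid completion of that omitted step.
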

\begin{proof}
The conclusion $a)$ is obvious. Let's prove
$b)$.   Suppose that $l=m$, i.e.,  $n=l=m$. Then there exist two values $p$ and $q$ such that ${{a}_{p, n-p}}\ne 0$ and $b_{q, n-q}\ne 0$.  This implies
that ${A}_{l}={{A}_{n}}\not\equiv0$ and ${B}_{m}={{B}_{n}}\not\equiv0$, in contraction with conclusion $a)$.
The proof is finished.
\end{proof}

  Next,  let's   assume that both of ${{a}_{i, j-i}}$ and ${{b}_{i, j-i}}$ are nonzero. From \eqref{7} and \eqref{8}, we have
\begin{equation}\label{14}
  \begin{cases}
    (i-1)s_{1}+(j-i)s_{2}=d_{1}-1, \\
is_{1}+(j-i-1)s_{2}=d_{2}-1. \\
  \end{cases}
\end{equation}
Then ${{d}_{1}}-\lambda {{s}_{1}}-1={{d}_{2}}-\lambda {{s}_{2}}-1$,
 where $\lambda =(d_1-d_2)/(s_1-s_2)$. If we write $T={{d}_{1}}-\lambda {{s}_{1}}-1={{d}_{2}}-\lambda {{s}_{2}}-1$,
  then \eqref{14} is changed  into
\begin{equation}\label{15}
\begin{cases}
(i-1-\lambda)s_{1}+(j-i)s_{2}=T, \\
is_{1}+(j-i-1-\lambda)s_{2}=T. \\
\end{cases}
\end{equation}

\begin{corollary}\label{co-4} Suppose that  system \eqref{1} is a  {\itshape PSQHPDS} with weight vector $w=\left( s_{1}^{{}}, s_{2}, d_{1}, d_{2} \right)$ and that
  $\lambda=(d_1-d_2)/(s_1-s_2) \in   \mathbb{Z}$.
Then $X_{n-t}\not\equiv0$ $(t<n)$ if and only if there is a unique $i\in\mathbb{Z}$ and $min\{0, 1+\lambda\}\leq i\leq max\{n-t, n-t+1+\lambda\}$ such that
\begin{equation*}
a_{i, n-t-i}^{2}+b_{i-1-\lambda , n-t-i+1+\lambda }^{2}\ne 0\quad and \quad\left( i-1-\lambda  \right){{s}_{1}}+\left( n-t-i \right){{s}_{2}}=T.
\end{equation*}
Here, $T=d_1-\lambda s_1-1$, and we define
$${{a}_{i, n-t-i}}={{b}_{i-1-\lambda , n-t-i+1+\lambda }}=0$$
whenever $i\left( n-t-i \right)<0$ or $\left( i-1-\lambda  \right)\left( n-t-i+1+\lambda  \right)<0$.
\end{corollary}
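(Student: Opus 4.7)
My plan is to derive Corollary \ref{co-4} as an essentially immediate consequence of Propositions \ref{pr-2} and \ref{pr-3}, together with the identity $T = d_1 - \lambda s_1 - 1 = d_2 - \lambda s_2 - 1$ already recorded after \eqref{14}. This identity is the crux: it lets the two separate Diophantine relations \eqref{7} and \eqref{8} (for coefficients of $P_{n-t}$ and $Q_{n-t}$ respectively) be rewritten as the \emph{same} equation $(i-1-\lambda)s_1 + (n-t-i)s_2 = T$, once the indices of the two monomials are linked by the shift $q = i - 1 - \lambda$ enforced by Proposition \ref{pr-3}.

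For the ``only if'' direction, assume $X_{n-t}\not\equiv 0$ and decompose $X_{n-t}=A_{n-t}+B_{n-t}$ as in the discussion preceding the corollary. By Proposition \ref{pr-2}, $P_{n-t}$ has at most one nonzero coefficient $a_{p,n-t-p}$ and $Q_{n-t}$ at most one nonzero coefficient $b_{q,n-t-q}$. I would split into three cases. (i) If only $a_{p,n-t-p}\ne 0$, set $i=p$; then \eqref{7} reads $(p-1)s_1+(n-t-p)s_2=d_1-1$, and subtracting $\lambda s_1$ from both sides yields $(i-1-\lambda)s_1+(n-t-i)s_2=T$. (ii) If only $b_{q,n-t-q}\ne 0$, set $i=q+1+\lambda$; \eqref{8} reads $qs_1+(n-t-q-1)s_2=d_2-1$, and substituting $q=i-1-\lambda$ and subtracting $\lambda s_2$ again produces the same relation. (iii) If both are nonzero, Proposition \ref{pr-3} forces $q=p-1-\lambda$ and setting $i=p$ handles both equations simultaneously. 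In each case $a_{i,n-t-i}^{2}+b_{i-1-\lambda,n-t-i+1+\lambda}^{2}\ne 0$ by construction.

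Uniqueness of $i$ is immediate: if $i_1\ne i_2$ both satisfied $(i-1-\lambda)s_1+(n-t-i)s_2=T$, subtracting would give $(i_1-i_2)(s_1-s_2)=0$, contradicting $s_1>s_2$. The ``if'' direction is trivial: once such an $i$ is produced with $a_{i,n-t-i}^{2}+b_{i-1-\lambda,n-t-i+1+\lambda}^{2}\ne 0$, at least one of the two monomials is a genuine nonzero term of $X_{n-t}$.

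The only genuine obstacle I anticipate is the verification of the range $\min\{0,1+\lambda\}\le i\le \max\{n-t,n-t+1+\lambda\}$. The legitimate indices in $P_{n-t}$ lie in $[0,n-t]$, while the legitimate indices $q$ in $Q_{n-t}$ lie in $[0,n-t]$, so the corresponding $i=q+1+\lambda$ lies in $[1+\lambda,n-t+1+\lambda]$; the stated interval is precisely the union of these two. The convention that $a_{i,n-t-i}$ and $b_{i-1-\lambda,n-t-i+1+\lambda}$ are set to zero whenever $i(n-t-i)<0$ or $(i-1-\lambda)(n-t-i+1+\lambda)<0$ makes the union uniform and absorbs the boundary cases without separate argument, so this step reduces to a direct verification rather than a substantive difficulty.
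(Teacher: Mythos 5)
Your proposal is correct and follows essentially the same route as the paper: the paper likewise treats sufficiency as clear and proves necessity by splitting on whether $A_{n-t}\not\equiv 0$ or $B_{n-t}\not\equiv 0$, using the two equations of \eqref{15} and the substitution $i_{1}=i-1-\lambda$ to arrive at $(i-1-\lambda)s_{1}+(n-t-i)s_{2}=T$. Your additional explicit uniqueness argument (subtracting two instances of the linear relation to get $(i_{1}-i_{2})(s_{1}-s_{2})=0$) and the explicit range check are minor elaborations of details the paper leaves implicit.
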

\begin{proof}
The sufficiency is clear.  Thus we only need to  prove the necessity.

Since ${{X}_{n-t}}\not\equiv0$, we have  ${{A}_{n-t}}\not\equiv0$ or ${{B}_{n-t}}\not\equiv0$.  We will divide the proof into two cases.

$Case \;1. $ If ${{A}_{n-t}}\not\equiv0$, then there is a unique $i\in \left\{ 0, \cdots , n-t \right\}$ such that ${{a}_{i, n-t-i}}\ne 0$.  By the first equation of \eqref{15}, we have
\begin{equation}\label{16} \left( i-1-\lambda  \right){{s}_{1}}+\left( n-t-i \right){{s}_{2}}=T.
\end{equation}

 $Case \;2. $
 If ${{B}_{n-t}}\not\equiv0$, then there is a unique ${{i}_{1}}\in \left\{ 0, \cdots , n-t \right\}$ such that ${{b}_{{{i}_{1}}, n-t-{{i}_{1}}}}\ne 0$.  By the second equation of
 \eqref{15}, we have
 \begin{equation}\label{17}
 {{i}_{1}}{{s}_{1}}+\left( n-t-{{i}_{1}}-1-\lambda  \right){{s}_{2}}=T.
 \end{equation}
 Let ${{i}_{1}}=i-1-\lambda $,  then $i\in \left\{ 1+\lambda , \cdots , n-t+1+\lambda  \right\}$ and
  $$b_{i_1,n-t-i_1}={{b}_{i-1-\lambda, n-t-i+1+\lambda }}\ne 0,$$
  and \eqref{17} is changed into
  \[\left( i-1-\lambda  \right){{s}_{1}}+\left( n-t-i \right){{s}_{2}}=T. \]

  Therefore, the conclusion is true in both cases.
\end{proof}

Form Corollary \ref{co-2} and Corollary \ref{co-4}, we can see that  {\itshape PSQHPDS}  has evident different properties
  between $\lambda \in \mathbb{Q}\backslash \mathbb{Z}$ and $\lambda \in \mathbb{Z}$.  Thus we would like to introduce   the notion of
  index of the  {\itshape PSQHPDS}.

\begin{definition}\label{de-1}\textbf {\emph{(Index)}}
  \emph{If system \eqref{1} is a {\itshape PSQHPDS} with weight vector $w=( s_{1}^{{}}, s_{2}, d_{1}, d_{2} )$,
  then we will say that $\lambda =(d_1-d_2)/(s_1-s_2)$ is an} index \emph{of system \eqref{1} or the vector field $X$ associated to  \eqref{1}}
  (see Remark \ref{re-12} for the uniqueness of the index of any given {\itshape PSQHPDS}).
\end{definition}
Note that when index $\lambda=0$,   system \eqref{1} becomes a quasi-homogeneous polynomial differential system.

\section{Weight vector of PSQHPDS}\label{se-3}
The main purpose of this section is to obtain the weight vector and the minimal weight vector of {\itshape PSQHPDS}
(remember that we have excluded the   semihomogeneous polynomial system)   of degree $n\geq 2$.

Firstly,  from  Subsection 2, we know that a {\itshape PSQHPDS} \eqref{1} of degree $n\geq2$ can be represented by
$$X=\sum\limits_{t=0}^{n-1}{{{X}_{n-t}}}=\sum\limits_{t=0}^{n-1}{{{A}_{n-t}}}\text{+}\sum\limits_{t=0}^{n-1}{{{B}_{n-t}}}.$$
Moreover, there exists at least an integer $t$ $(0\leq t\leq n-1)$ such that ${{A}_{n-t}}\not\equiv0$ and at least an integer $\tilde{t}$ $(0\leq \tilde{t}\leq n-1)$
such that ${{B}_{n-\tilde{t}}}\not\equiv0$,  and $t\cdot \tilde{t}=0$. We define the following sets
\[A=\{A_{n-t}|A_{n-t}\not\equiv0, t\in\mathbb{N}, t< n\}\quad \text{and} \quad B=\{B_{n-t}|B_{n-t}\not\equiv0, t\in\mathbb{N}, t< n\}. \]
Denote the cardinal of the set $\mathcal{X}$ by $|\mathcal{X}|$.  Obviously,   $|A|\geq1$ and $|B|\geq1$.
Noting that {\itshape PSQHPDS}  \eqref{1}  is a  semihomogeneous system  provided $|A|=1$ and $|B|=1$.  Thus in what follows
it suffices to consider the cases that $|A|\geq 2$ or $|B|\geq 2$. We will divide the   discussion   into two cases:
\begin{itemize}
\item[$Case\;1. $]$|A|\geq2$ and $|B|\geq1$, that is, $X$ has at least two monomials in the first component;
\item[$Case\;2. $]$|A|=1$ and $|B|\geq2$, that is, $X$ contains only one monomial in the
first component and has at least two monomials in the second component.
\end{itemize}
\subsection{Weight vector for the case $|A|\geq 2$ and $|B|\geq1$}
\begin{proposition}\label{pr-4}
 Suppose that   system \eqref{1} is a  {\itshape PSQHPDS}   of degree $n\ge 2$ with weight vector
 $w=\left( s_{1}, s_{2}, d_{1}, d_{2} \right)$, and  that $|A|\geq2$, $|B|\geq1$. Let $t,\tilde{t}\in\{0, . . . , n-1\}$  and
 $\triangle t\in\{1, . . . , n-t-1\}$, such that $A_{n-t}A_{n-t-\triangle t}\neq0$ and $B_{n-\tilde{t}}\not\equiv0$.
 The coefficients of the  non-zero component of vector field $A_{n-t}$, $A_{n-t-\triangle t}$ and $B_{n-\tilde{t}}$ are respectively $a_{p, n-t-p}$, $a_{p_{1}, n-t-\triangle t-p_{1}}$
 and $b_{q, n-\tilde{t}-q}$, where $p\in\{0, . . . , n-t\}$, $p_{1}\in\{0, . . , n-t-\triangle t\}$ and $q\in\{0, . . . , n-\tilde{t}\}$.
 Then the following statements hold.
 \begin{itemize}
 \item[a)]$k:=p_{1}-p\geq1$ and $k\leq n-t-\triangle t-p$;
 \item[b)]If $d_{1}=1$, then
 \begin{itemize}
   \item[b.1)]The only coefficient  of the  polynomial $P$ that can be different from zero are $a_{0, n-t}$ and $a_{1, 0}$.
   \item[b.2)]$t\in\{0, 1, . . . , n-2\}$, $s_{1}=(n-t)s_{2}$ and $d_{2}=\big((n-t-1)q+n-\tilde{t}-1\big)s_{2}+1$.
   \item[b.3)]the minimal weight vector of system \eqref{1} is
   $$w_{m}=(n-t, 1, 1, (n-t-1)q+n-\tilde{t}\;).$$
 \end{itemize}
 \item[c)]If $d_{1}>1$, then
 \begin{itemize}
   \item[c.1)]$s_{1}=\frac{(\triangle t+k)(d_{1}-1)}{D}$, $s_{2}=\frac{k(d_{1}-1)}{D}$ and $d_{2}=\frac{\big((n-\tilde{t}-1)k+q\triangle t\big)(d_{1}-1)}{D}+1$,
    where $D=(n-t-1)k+(p-1)\triangle t>0$;
   \item[c.2)]the minimal weight vector of system \eqref{1} is
   $$w_{m}=\bigg(\frac{k+\triangle t}{s},\frac{k}{s},\frac{D}{s}+1,\frac{(n-\tilde{t}-1)+q\triangle t}{s}+1\bigg),$$
   where $s$ is the greatest common divisor of $\triangle t$ and $k$.
 \end{itemize}
 \end{itemize}
\end{proposition}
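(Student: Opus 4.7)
The overall strategy is to translate each nonzero-coefficient hypothesis into the corresponding instance of \eqref{7} or \eqref{8} and treat the resulting equalities as a small linear system in $s_1,s_2,d_1,d_2$. The three parts extract progressively more information from this system.

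For part (a), apply \eqref{7} to both $a_{p,n-t-p}\ne 0$ and $a_{p_1,n-t-\triangle t-p_1}\ne 0$ and subtract to obtain $k\,s_1=(k+\triangle t)s_2$, where $k:=p_1-p$. Since $s_1>s_2>0$ and $\triangle t\ge 1$, the identity forces $k$ to be a positive integer; the upper bound $k\le n-t-\triangle t-p$ is immediate from $p_1\le n-t-\triangle t$.

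For part (b), the assumption $d_1=1$ collapses \eqref{7} to $(i-1)s_1+(j-i)s_2=0$ for every nonzero $a_{i,j-i}$. Since $s_1,s_2>0$, the only non-negative solutions are $(i,j)=(1,1)$ and $(i,j)=(0,j)$ with $j=s_1/s_2$, so at most two homogeneous parts of $P$ can be nonzero. The hypothesis $|A|\ge 2$ then forces precisely $a_{1,0}\ne 0$ and $a_{0,n-t}\ne 0$, yielding (b.1) together with $p=0$, $p_1=1$, $\triangle t=n-t-1$. The identity $s_1=(n-t)s_2$ and the range $t\le n-2$ follow, and substituting $s_1=(n-t)s_2$ into \eqref{8} applied to $b_{q,n-\tilde t-q}$ gives the formula for $d_2$ in (b.2). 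For (b.3), Corollary~\ref{co-5} imposes $\gcd(s_1,s_2)=1$, which combined with $s_1=(n-t)s_2$ forces $s_2=1$ and hence the stated $w_m$.

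For part (c), use the ratio from (a) to parametrize $s_1=(k+\triangle t)c$, $s_2=kc$, substitute into $(p-1)s_1+(n-t-p)s_2=d_1-1$, and solve for the scalar $c=(d_1-1)/D$ with $D=(n-t-1)k+(p-1)\triangle t$. Feeding $s_1,s_2$ back into \eqref{8} yields the formula for $d_2$, proving (c.1). For (c.2), let $s:=\gcd(k,\triangle t)$; the coprimality of $(k+\triangle t)/s$ and $k/s$ together with Corollary~\ref{co-5} forces these to be the minimal $s_1,s_2$, and back-substitution then produces $d_1,d_2$ as stated. The main technical hurdle I anticipate is verifying $D>0$, which is implicitly required both to divide by $D$ and to keep $d_1,d_2$ in $\mathbb{N}^+$: when $p\ge 1$ both summands of $D$ are non-negative and one is strictly positive, while when $p=0$ the bound $k+\triangle t\le n-t$ from (a) leaves only the boundary case $k=1,\triangle t=n-t-1$, which would force $d_1=1$ and contradict the case hypothesis. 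Integrality of every entry of $w_m$ in (c.2) is then routine from $s\mid k$ and $s\mid\triangle t$.
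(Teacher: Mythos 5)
Your proposal is correct and follows essentially the same route as the paper: it translates the nonzero coefficients into the linear relations \eqref{23}--\eqref{24}, subtracts to obtain $ks_1=(k+\triangle t)s_2$, and then splits on $d_1=1$ versus $d_1>1$, including the same resolution of the delicate subcase $p=0$, $k=1$, $\triangle t=n-t-1$ when proving $D>0$. The only cosmetic differences are that you obtain b.1) by solving $(i-1)s_1+(j-i)s_2=0$ over the non-negative integers directly rather than via the vanishing determinant \eqref{27}, and you invoke Corollary \ref{co-5} for minimality where the paper runs an explicit B\'ezout computation; both give the same conclusions (and your back-substitution in c.2) yields $d_2^*=\frac{(n-\tilde t-1)k+q\triangle t}{s}+1$, which is the correct value --- the displayed $w_m$ in the statement is missing the factor $k$).
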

\begin{proof}
 $ a)$ If $A_{n-t}A_{n-t-\triangle t}\neq0$  ($\triangle t\in\{1, . . . , n-t-1\}$), then $A_{n-t}\not\equiv0$ and $A_{n-t-\triangle t}\not\equiv0$. Applying Proposition \ref{pr-2}, we know that there
 exist a unique $p\in \left\{ 0, \cdots , n-t \right\}$ such that ${{a}_{p, n-t-p}}\ne 0$ and a unique $p_{1}\in \left\{ 0, \cdots , n-t-\triangle t \right\}$
 such that ${{a}_{p_{1}, n-t-\triangle t-p_{1}}}\ne 0$. Similarly, if $B_{n-\tilde{t}}\not\equiv0$,  then there exists
 a unique $q\in \left\{ 0, \cdots , n-\tilde{t} \right\}$ such that ${{b}_{q, n-\tilde{t}-q}}\ne 0$. From \eqref{7} and \eqref{8}, we have
 \begin{equation}\label{23}
 \begin{cases}
 (p-1)s_{1}+(n-t-p)s_{2}=d_{1}-1, \\
 (p_{1}-1)s_{1}+(n-t-\triangle t-p_{1})s_{2}=d_{1}-1,
 \end{cases}
 \end{equation}
 and
 \begin{equation}\label{24}
 qs_{1}+(n-\tilde{t}-q-1)s_{2}=d_2-1.
 \end{equation}
Using \eqref{23} it is be clear that $(p_{1}-p)(s_{1}-s_{2})=s_{2}\triangle t$. Since $s_{1}-s_{2}$, $s_{2}$ and $\triangle t$ are positive numbers,
we get that  $p_{1}>p$.  Further, in view of $p_1\leq n-t-\triangle t$, it follows that $k=p_1-p\leq n-t-\triangle t-p$, and thus statement $a)$ holds.

Substituting ${{p}_{1}}=p+k$ into \eqref{23} gives:
\begin{equation}\label{25}
\begin{cases}
(p-1)s_{1}+(n-t-p)s_{2}=d_{1}-1, \\
(p+k-1)s_{1}+(n-t-\triangle t-p-k)s_{2}=d_{1}-1.
\end{cases}
\end{equation}

$b)$ If $d_{1}=1$, then
we can regard \eqref{25} as  homogeneous linear equations   with unknowns $s_{1}$ and $s_{2}$. In order to obtain the positive solution
it is necessary  that
\begin{equation}\label{27}
\begin{vmatrix}
p-1&n-t-p\\
p+k-1&n-t-\triangle t-p-k
\end{vmatrix}=0,
\end{equation}
that is,
\begin{equation}\label{28}
(n-t-1)k+(p-1)\triangle t=0.
\end{equation}
Since $n-t-1\geq0$, $k\geq1$ and $\Delta t\geq1$, \eqref{28} means that $p-1\leq0$, i.e.,
 $p=0$ or $p=1$. If $p=1$, then $(n-t-1)k=0$, that is, $t=n-1$. This in contraction with $\triangle t\in\{1, . . . , n-t-1\}$.
 Thus $p$ must be zero, and hence $(n-t-1)k=\triangle t$.  Since $\triangle t\in\{1, . . . , n-t-1\}$,
 we obtain that $k=1$ and $n-t-\triangle t=1$. This implies that the only coefficient
  of    polynomial $P$ that can be different from zero are $a_{0, n-t}$ and $a_{1, 0}$. Hence conclusion  $b. 1)$ is confirmed.

Substituting $p=0$  into the first equation of \eqref{25} (with $d_1=1$) gives:
\begin{equation}\label{29}
 (n-t)s_{2}=s_{1}.
\end{equation}
By $s_{1}>s_{2}$ and \eqref{29}, we get $t\in\{0, 1, . . . , n-2\}$. Substituting \eqref{29} into   equation \eqref{24}, it gives:
\begin{equation}\label{30}
\big((n-t-1)q+n-\tilde{t}-1\big)s_{2}=d_{2}-1.
\end{equation}
Equation \eqref{30} implies that $d_{2}=\big((n-t-1)q+n-\tilde{t}-1\big)s_{2}+1$. Hence, the weight vector is
$$w=\big((n-t)s_{2}, s_{2}, 1, \big((n-t-1)q+n-\tilde{t}-1\big)s_{2}+1\big).$$
We get conclusion  $b. 2)$.

From the definition of minimal weight vector, taking $s_{2}=1$,  we obtain that
$${{w}_{m}}=\big( n-t, 1, 1, (n-t-1)q+n-\tilde{t}\;\big).$$
Therefore statement $b. 3)$ is proved.

$c)$ If $d_{1}>1$, we can also regard \eqref{25} as linear equations with unknowns $s_{1}$ and $s_{2}$.
 This linear equations has a unique solution if and only if its coefficient matrix is invertible,
  that is, $D=(n-t-1)k+(p-1)\triangle t\neq0$.  We claim that $D>0$.

Indeed, since $\triangle t\in\{1, . . . , n-t-1\}$, we get $t\in\{0, 1, . . . , n-2\}$.
If $p>1$, then by $n-1\geq t$, it follows that $D>0$. If $p=1$, then by $t\leq n-2$ we obtain $D=(n-t-1)k\geq k>0$. If $p=0$, we assume to the contrary that $D\leq0$.
Since $\triangle t\in\{1, . . . , n-t-1\}$ and $t\in\{0, 1, . . . , n-2\}$, it follows
 that $(n-t-1)k\leq\triangle t\leq n-t-1$. This means that $k=1$
and $\triangle t=n-t-1$. Then,  the second equation of \eqref{25}  becomes $0=d_{1}-1$ in contraction with $d_{1}>1$. So the claim is confirmed.

Thus,  the solution of   system \eqref{25} with unknowns $s_{1}$ and $s_{2}$ is
\begin{equation}\label{31}
  \begin{cases}
    s_{1}=\frac{(\triangle t+k)(d_{1}-1)}{D}, \\
    s_{2}=\frac{k(d_{1}-1)}{D}.
  \end{cases}
\end{equation}
Substituting \eqref{31} into \eqref{24} gives:
\begin{equation}\label{32}
  d_{2}=\frac{\big((n-\tilde{t}-1)k+q\triangle t\big)(d_{1}-1)}{D}+1.
\end{equation}
This completes the proof of statement $c.1)$.

$c.2)$ Suppose that $s$ is the greatest common divisor of $\triangle t$ and $k$. Then,  $D$ can be represented by $D=su$, where $u$ is a non-negative integer.
From the number theory, we get that there exist two integers $v$ and $h$ such that $v\triangle t/s+kh/s=1$. Hence
$$\frac{\triangle t\left( {{d}_{1}}-1 \right)}{su}v+\frac{k\left( {{d}_{1}}-1 \right)}{su}h=\frac{{{d}_{1}}-1}{u},$$
that is, $$\left( {{s}_{1}}-{{s}_{2}} \right)v+{{s}_{2}}h=\frac{{{d}_{1}}-1}{u}.$$
 Moreover, we can write $${{d}_{1}}-1=ru,\ {{s}_{1}}=\frac{r\left( \triangle t+k \right)}{s},\ {{s}_{2}}=\frac{rk}{s}$$ and
 $$ {{d}_{2}}=\frac{\big(( n-\tilde{t}-1)k+q\triangle t \big)r}{s}+1,$$
  where $r$ is non-negative integer. Clearly, all of  $s_{1}^{{}}, s_{2}^{{}} ,  d_{1}^{{}}$ and $d_{2}^{{}}$ are increasing with $r$. From the definition of
  minimal weight vector, taking $r=1$, we complete the proof of statement $c. 2)$. 	
\end{proof}
\begin{remark}\label{re-3}
From  the definition of index, we obtain that
\begin{enumerate}[(i)]
  \item if $d_{1}=1$, then $\lambda=-q-\frac{n-\tilde{t}-1}{n-t-1}$;
  \item if $d_{1}>1$, then $\lambda=p-q-1-\frac{(t-\tilde{t})k}{\triangle t}$.
\end{enumerate}
\end{remark}
\subsection{Weight vector for the case $|A|=1$ and $|B|\geq 2$}
\begin{proposition}\label{pr-8}
Suppose that    system \eqref{1} is a  {\itshape PSQHPDS}  of degree $n\ge 2$ with
weight vector $w=\left( s_{1}, s_{2}, d_{1}, d_{2} \right)$, and that   $|A|=1$, $|B|\geq2$.
Let $\tilde{t},t\in\{0, . . . , n-1\}$ and $\triangle \tilde{t}\in\{1, . . . , n-\tilde{t}-1\}$, such that $B_{n-\tilde{t}}B_{n-\tilde{t}-\triangle \tilde{t}}\neq0$
and $A_{n-t}\not\equiv0$. The coefficients of the non-zero component of vector field $B_{n-\tilde{t}}$, $B_{n-\tilde{t}-\triangle \tilde{t}}$ and $A_{n-t}$
are respectively $b_{q, n-\tilde{t}-q}$, $b_{q_{1}, n-\tilde{t}-\triangle \tilde{t}-q_{1}}$ and $a_{p, n-t-p}$, where $q\in\{0, . . . , n-\tilde{t}\}$, $q_{1}\in\{0, . . , n-\tilde{t}-\triangle \tilde{t}\}$
and $p\in\{0, . . . , n-t\}$. Then the following statements hold.
\begin{itemize}
  \item[a)]$k:={{q}_{1}}-q\ge 1$ and $k\le n-\tilde{t}-\triangle\tilde{t}-q$;
  \item[b)]$d_{2}>1$;
  \item[c)]$s_{1}=\frac{(k+\triangle\tilde{t})(d_{2}-1)}{D}$, $s_{2}=\frac{k(d_{2}-1)}{D}$ and $d_{1}=\frac{\big((n-t-1)k+(p-1)\triangle\tilde{t}\;\big)(d_{2}-1)}{D}+1$, where $D=(n-\tilde{t}-1)k+\triangle\tilde{t}q>0$.
      Moreover, if $p=0$, then $(n-t-1)k\geq\triangle\tilde{t}$.
  \item[d)]the minimal weight vector of system \eqref{1} is
  $$w_{m}=\bigg(\frac{k+\triangle\tilde{t}}{s}, \frac{k}{s}, \frac{(n-t-1)k+(p-1)\triangle\tilde{t}}{s}+1, \frac{D}{s}+1\bigg),$$
   where $s$ is the greatest common divisor of $\triangle\tilde{t}$ and $k$.
\end{itemize}
\end{proposition}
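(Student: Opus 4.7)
The plan is to mirror the approach of Proposition \ref{pr-4}, interchanging the roles played by $P$ and $Q$. Recall that the vanishing relations \eqref{7} and \eqref{8} translate the hypotheses $a_{p,n-t-p}\neq 0$, $b_{q,n-\tilde{t}-q}\neq 0$, and $b_{q_1,n-\tilde{t}-\triangle\tilde{t}-q_1}\neq 0$ into three linear equations in $(s_1,s_2,d_1-1,d_2-1)$:
\begin{align*}
(p-1)s_1 + (n-t-p)s_2 &= d_1-1,\\
q s_1 + (n-\tilde{t}-q-1)s_2 &= d_2-1,\\
(q+k)s_1 + (n-\tilde{t}-\triangle\tilde{t}-q-k-1)s_2 &= d_2-1,
\end{align*}
where $k=q_1-q$. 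Statement $a)$ drops out by subtracting the two $Q$-equations: one obtains $k(s_1-s_2)=s_2\,\triangle\tilde{t}$, and since $s_1>s_2>0$ and $\triangle\tilde{t}\geq 1$ this forces $k\geq 1$; the upper bound $k\leq n-\tilde{t}-\triangle\tilde{t}-q$ comes from $q_1\leq n-\tilde{t}-\triangle\tilde{t}$.

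For statement $c)$, I would view the two $Q$-equations as a $2\times 2$ linear system in $(s_1,s_2)$. A direct determinant computation gives $-D = -(n-\tilde{t}-1)k-q\,\triangle\tilde{t}$, so solvability for positive $(s_1,s_2)$ requires $D\neq 0$. The positivity $D>0$ follows from the fact that $\triangle\tilde{t}\in\{1,\ldots,n-\tilde{t}-1\}$ forces $\tilde{t}\leq n-2$, whence $n-\tilde{t}-1\geq 1$ and $(n-\tilde{t}-1)k\geq 1$. Cramer's rule then yields the asserted closed forms for $s_1$ and $s_2$, and substituting into the $P$-equation produces the formula for $d_1$. For the ``moreover'' clause when $p=0$, the requirement $d_1\in\mathbb{N}^+$ immediately forces $(n-t-1)k\geq\triangle\tilde{t}$. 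Once part $c)$ is in hand, statement $b)$ follows by contradiction: if $d_2=1$, the formulas of $c)$ would give $s_1=s_2=0$, which is impossible.

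The most delicate step is statement $d)$, where I would copy the number-theoretic idea from the proof of Proposition \ref{pr-4}$c.2)$. Setting $s=\gcd(\triangle\tilde{t},k)$ so that $\triangle\tilde{t}/s$ and $k/s$ are coprime positive integers, Bezout supplies integers $v,h$ with $v(\triangle\tilde{t}/s)+h(k/s)=1$. Multiplying this identity by $(d_2-1)/(D/s)$ and using the formulas for $s_1-s_2=\triangle\tilde{t}(d_2-1)/D$ and $s_2=k(d_2-1)/D$ from $c)$, I obtain
\[
(s_1-s_2)v + s_2 h = \frac{d_2-1}{D/s}.
\]
The left-hand side is an integer, so $(D/s)\mid(d_2-1)$; write $d_2-1=r(D/s)$ with $r\in\mathbb{N}^+$. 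Substituting back presents $s_1,s_2,d_1,d_2$ as strictly increasing linear functions of $r$, and the minimum is reached at $r=1$, giving precisely the weight vector stated in $d)$. This Bezout-and-minimization step is the principal technical obstacle; however, because the underlying algebra is entirely parallel to Proposition \ref{pr-4}, I expect it to go through without surprises, the only additional care being to verify that each of $(k+\triangle\tilde{t})/s$, $k/s$, $((n-t-1)k+(p-1)\triangle\tilde{t})/s$ and $D/s$ is genuinely an integer (which it is, since $s$ divides both $k$ and $\triangle\tilde{t}$, and hence divides $D$).
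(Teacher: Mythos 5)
Your proposal is correct and follows essentially the same route as the paper: the same three linear relations from \eqref{7}--\eqref{8}, the same subtraction argument for $a)$, Cramer's rule for $c)$, and the same Bezout-and-monotonicity argument (borrowed from Proposition \ref{pr-4}$c.2)$) for $d)$, which the paper itself only sketches by reference. The only deviation is that you establish $c)$ first and deduce $b)$ from the resulting formulas (if $d_2=1$ then $s_1=s_2=0$), while the paper argues $b)$ directly via the vanishing-determinant condition; since $D>0$ holds unconditionally here (as you correctly observe, more simply than in Proposition \ref{pr-4}), both orderings are sound.
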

\begin{proof}
$a)$ If $B_{n-\tilde{t}}B_{n-\tilde{t}-\triangle\tilde{t}}\neq0$, where $\triangle \tilde{t}\in\{1, . . . , n-\tilde{t}-1\}$,  then $B_{n-\tilde{t}}\not\equiv0$ and $B_{n-\tilde{t}-\triangle\tilde{t}}\not\equiv0$.
Using  Proposition \ref{pr-2}, we have that there exists a unique $q\in \left\{ 0, \cdots , n-\tilde{t} \right\}$ such that ${{b}_{q, n-\tilde{t}-q}}\ne 0$
and a unique $q_{1}\in \left\{ 0, \cdots , n-\tilde{t}-\triangle \tilde{t} \right\}$
such that ${{b}_{q_{1}, n-\tilde{t}-\triangle \tilde{t}-q_{1}}}\ne 0$.
 Similarly, if $A_{n-t}\not\equiv0$,  then there exists a unique $p\in \left\{ 0, \cdots , n-t \right\}$
such that ${{a}_{p, n-t-p}}\ne 0$. From \eqref{7} and \eqref{8}, we have
\begin{equation}\label{39}
\begin{cases}
qs_{1}+(n-\tilde{t}-q-1)s_{2}=d_{2}-1, \\
q_{1}s_{1}+(n-\tilde{t}-\triangle\tilde{t}-q_{1}-1)s_{2}=d_{2}-1,
\end{cases}
\end{equation}
and
\begin{equation}\label{40}
(p-1)s_{1}+(n-t-p)s_{2}=d_{1}-1.
\end{equation}
Using \eqref{39} it is   clear that $(q_{1}-q)(s_{1}-s_{2})=s_{2}\triangle \tilde{t}$. Since $s_{1}-s_{2}$, $s_{2}$ and $\triangle \tilde{t}$ are positive numbers, we get that  $q_{1}>q$, and statement $a)$ holds.

 Substituting ${{q}_{1}}=q+k$ into \eqref{39} gives:
\begin{equation}\label{41}
\begin{cases}
qs_{1}+(n-\tilde{t}-q-1)s_{2}=d_{2}-1, \\
(q+k)s_{1}+(n-\tilde{t}-\triangle\tilde{t}-q-k-1)s_{2}=d_{2}-1.
\end{cases}
\end{equation}

Assume that $d_{2}=1$. Then
the necessary condition for \eqref{41} to have positive solution is
\begin{equation}\label{43}
\begin{vmatrix}
  q&n-\tilde{t}-q-1\\
  q+k&n-\tilde{t}-\triangle\tilde{t}-q-k-1
\end{vmatrix}=0,
\end{equation}
that is,
\begin{equation}\label{44}
(n-\tilde{t}-1)k+\triangle\tilde{t}q=0.
\end{equation}
Since $n-\tilde{t}-1\geq0$, $k\geq1$ and $\Delta \tilde{t}\geq1$, \eqref{44} implies that $q=0$ and $n-\tilde{t}-1=0$, which   contradicts
 with $1\leq\triangle\tilde{t}\leq n-\tilde{t}-1$. Thus we have  confirmed   statement $b)$.

$c)$ Since $d_{2}\neq1$, we can also view \eqref{41} as linear equations with unknowns $s_{1}$ and $s_{2}$. It
 has a unique solution if and only if $D=(n-\tilde{t}-1)k+\triangle\tilde{t}q\neq0$.  Analogously to the proof
of Proposition \ref{pr-4} we can prove that  $D>0$.

Thus,  the solution of the linear system \eqref{41} with unknowns $s_{1}$ and $s_{2}$ is
\begin{equation}\label{45}
\begin{cases}
  s_{1}=\frac{(k+\triangle\tilde{t}\;)(d_{2}-1)}{D}, \\
  s_{2}=\frac{k(d_{2}-1)}{D}.
\end{cases}
\end{equation}
Substituting \eqref{45} into \eqref{40} gives:
\begin{equation}\label{46}
d_{1}=\frac{\big((n-t-1)k+(p-1)\triangle\tilde{t}\;\big)(d_{2}-1)}{D}+1.
\end{equation}
If $p=0$, then $(n-t-1)k+(p-1)\triangle\tilde{t}\geq 0$ due to the fact that $d_{2}>1, d_{1}\geq 1$  and $D>0$.
This prove the statement  $c)$.

$d)$ The proof  can be done  in a way completely analogous to the   proof of statement $d)$ of Proposition \ref{pr-4}, and hence is omitted.

The proof of Proposition \ref{pr-8} is finished.
\end{proof}
\begin{remark}\label{re-5}
 From the definition of index, we obtain $\lambda=p-q-1-(t-\tilde{t}\;)k/(\triangle\tilde{t})$.
\end{remark}

\section{Algorithm of PSQHPDS}\label{se-4}
Our main aim of this section is to obtain the algorithm of the semi-quasi homogeneous but non-semihomogeneous polynomial vector field of degree $n\geq2$.
This algorithm  consists of three parts.  Everyone can applying this algorithm
step by step to obtain directly all the semi-quasi homogeneous planar polynomial vector fields with a given degree.
We will illustrate the application of this algorithm in the next section.
\subsection{The algorithm of $|A|\geq2$ and $|B|\geq1$ with $d_{1}=1$}\label{sub-4.1}

If $B_{n-\tilde{t}}\not\equiv0$ and $B_{n-\tilde{t}_{1}}\not\equiv0$ (the definition of $B_{n-t}$ can be  seen in section \ref{se-2}),
  with $\tilde{t}\neq\tilde{t}_{1}$, then by  applying Proposition \ref{pr-2},
  we know that  there exist a unique $q\in\{0, . . . , n-\tilde{t}\}$ such that $b_{q, n-\tilde{t}-q}\neq0$
  and a unique $q_{1}\in\{0, . . . , n-\tilde{t}_{1}\}$ such that $b_{q_{1}, n-\tilde{t}_{1}-q_{1}}\neq0$.
  From   equation \eqref{8}, one can get that $\tilde{t}$,
   $\tilde{t}_{1}\in\{0, . . . , n-1\}$, $q\in\{0, . . . , n-\tilde{t}\}$ and $q_{1}\in\{0, . . . , n-\tilde{t}_{1}\}$
   satisfy the following equations
\begin{equation}\label{54}
\begin{split}
\pi_{q}^{\tilde{t}, 0}[0]&:\ qs_{1}+(n-\tilde{t}-q-1)s_{2}+1-d_{2}=0, \\
\pi_{q_{1}}^{\tilde{t}_{1}, 0}[0]&:\ q_{1}s_{1}+(n-\tilde{t}_{1}-q_{1}-1)s_{2}+1-d_{2}=0.
\end{split}
\end{equation}

In the following,  we consider the relation between the equations $\pi_{q}^{\tilde{t}, 0}[0]$ and $\pi_{q_{1}}^{\tilde{t}_{1}, 0}[0]$.
\begin{proposition}\label{pr-11}
Suppose that  system \eqref{1} is a  {\itshape PSQHPDS}   of degree $n\ge 2$
 with weight vector $w=\left( s_{1}, s_{2}, d_{1}, d_{2} \right)$, and that $d_{1}=1$.
  Let $t\in\{0, . . . , n-2\}$ and $\tilde{t}$, $\tilde{t}_{1}\in\{0, . . . , n-1\}$  such that $s_{1}=(n-t)s_{2}$.
  If the two equations $\pi_{q}^{\tilde{t}, 0}[0]$ and $\pi_{q_{1}}^{\tilde{t}_{1}, 0}[0]$ is compatible , then
\begin{equation}\label{62}
 q_{1}-q=\frac{\tilde{t}_{1}-\tilde{t}}{n-t-1}
\end{equation}
\end{proposition}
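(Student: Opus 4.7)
The plan is to derive the stated relation by linear elimination. Since both equations $\pi_q^{\tilde{t},0}[0]$ and $\pi_{q_1}^{\tilde{t}_1,0}[0]$ contain the same unknown $d_2$ with coefficient $-1$, the natural first move is to subtract them; this immediately yields
\[
(q_1-q)s_1 + \bigl((n-\tilde{t}_1-q_1-1)-(n-\tilde{t}-q-1)\bigr)s_2 = 0,
\]
which simplifies to $(q_1-q)(s_1-s_2) = (\tilde{t}_1-\tilde{t})s_2$.

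Next I would use the standing hypothesis $s_1 = (n-t)s_2$ to substitute $s_1 - s_2 = (n-t-1)s_2$ into the above identity, obtaining
\[
(q_1-q)(n-t-1)s_2 = (\tilde{t}_1-\tilde{t})s_2.
\]
Because $s_2 \in \mathbb{N}^{+}$ (in particular $s_2 \neq 0$), this cancels to $(q_1-q)(n-t-1) = \tilde{t}_1 - \tilde{t}$. The hypothesis $t \in \{0,\ldots, n-2\}$ guarantees $n-t-1 \geq 1$, so dividing gives exactly \eqref{62}.

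The argument is purely algebraic, with no serious obstacle; the only thing to be mindful of is tracking signs correctly in the subtraction and verifying that $n-t-1$ is nonzero so that the division at the end is legitimate — both of which are handled by the stated hypothesis $t \leq n-2$. I would therefore present the proof as the three-line computation above, ending with the explicit note that $n-t-1 \geq 1$ ensures we may divide.
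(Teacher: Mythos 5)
Your proof is correct and reaches the conclusion by essentially the same linear-algebra route as the paper: the paper packages the three relations $s_1=(n-t)s_2$, $\pi_q^{\tilde t,0}[0]$ and $\pi_{q_1}^{\tilde t_1,0}[0]$ into a homogeneous $3\times 3$ system in the unknowns $s_1$, $s_2$, $d_2-1$ and sets its determinant to zero, whereas you eliminate $d_2$ by direct subtraction and then substitute $s_1=(n-t)s_2$. Your version is, if anything, slightly cleaner, since it invokes $s_2>0$ and $n-t-1\ge 1$ explicitly instead of appealing to the nontrivial-solution criterion.
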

\begin{proof}
Since $s_{1}=(n-t)s_{2}$ and the two equations $\pi_{q}^{\tilde{t}, 0}[0]$ and $\pi_{q_{1}}^{\tilde{t}_{1}, 0}[0]$ is compatible,  then the following
linear equations
\begin{equation}\label{63}
  \begin{cases}
    s_{1}-(n-t)s_{2}=0, \\
    qs_{1}+(n-\tilde{t}-q-1)s_{2}+1-d_{2}=0, \\
    q_{1}s_{1}+(n-\tilde{t}_{1}-q_{1}-1)s_{2}+1-d_{2}=0,
  \end{cases}
\end{equation}
with unknowns $s_{1}$, $s_{2}$ and $d_{2}-1$ exist nontrivial solution only if its coefficient matrix is non-invertible,
that is, $(n-t-1)(q_{1}-q)-(\tilde{t}_{1}-\tilde{t})=0$.
This completes the proof of the Proposition.
\end{proof}
\begin{remark}\label{re-10}
  By Remark \ref{re-3},  one can easily find that
  \begin{equation}\label{liangadd1}\lambda=-q-(n-\tilde{t}-1)/(n-t-1)=-q_{1}-(n-\tilde{t}_{1}-1)/(n-t-1)\end{equation}
   with $d_{1}=1$. If $t=0$, then
we can change \eqref{62} into
\begin{equation}\label{79}
  q_{1}-q=\frac{\tilde{t}_{1}-\tilde{t}}{n-1}.
\end{equation}
Since $q_{1}-q\in\mathbb{Z}$, $q\leq n-\tilde{t}$, $q_{1}\leq n-\tilde{t}_{1}$, $-(n-1)\leq\tilde{t}_{1}-\tilde{t}\leq n-1$
and $\tilde{t}_{1}\neq\tilde{t}$, in order to satisfy \eqref{79} it is necessary that
$\tilde{t}_{1}=n-1$, $q_{1}=1$, $\tilde{t}=0$, $q=0$ or $\tilde{t}_{1}=0$, $q_{1}=0$, $\tilde{t}=n-1$, $q=1$.

What is more, from \eqref{liangadd1}, we can see that $\lambda$ is neither dependent on $q, \tilde{t}$, nor on $s_1,s_2,d_2$.
\end{remark}

Using the above results, we will establish the first part of the
algebraic algorithm for {\itshape PSQHPDS}.
If $t=0$, then by   statement $b)$ of Proposition \ref{pr-4} and Remark \ref{re-10},
the  corresponding {\itshape PSQHPDS} is
\begin{equation}\label{80}
\begin{split}
  &\dot{x}=a_{0, n}y^{n}+a_{1, 0}x, \\
  &\dot{y}=b_{0, n}y^{n}+b_{1, 0}x, \\
  \end{split}
\end{equation}
with $w=(ns_{2}, s_{2}, 1, (n-1)s_{2}+1)$ and $w_{m}=(n, 1, 1, n)$, and we denote the corresponding vector field  by $X_{0,0,0}$.

Next we turn to the case that $t>0$. Since  $t\in\{1, . . . , n-2\}$, it
 is nature to assume that $n\geq3$.  Noting that  $\deg(X)=n$, which implies $t\cdot\tilde{t}=0$, it follows that $\tilde{t}=0$.

 Now we fix $n\geq3$, $t\in\{1, . . . , n-2\}$, $q\in\{0, 1,. . . , n\}$, and give the algorithm to obtain the
 corresponding  {\itshape PSQHPDS} as follows.
\begin{algorithm}\label{al-4}
  \mbox{}\par
  \begin{enumerate}[(step 1). ]
  \item\emph{We calculate the index $\lambda$ by} (i) \emph{of Remark \ref{re-3}. }
   \item\emph{We calculate the following set}
   \begin{equation}\label{64}
     I_{t, 0, q}:=\{(\tilde{t}^{*}, q^{*}):\tilde{t}^{*}=(q^{*}-q)(n-t-1), \tilde{t}^{*}\in\{1, . . . , n-1\}\;and\;q^{*}\in\{0, . . . , n-\tilde{t}^{*}\}\}.
   \end{equation}
   \emph{ By   equation \eqref{24}, we know that each ordered pair $(\tilde{t}^{*}, q^{*})\in I_{t, 0, q}$
    determines a vector field $B_{n-\tilde{t}^{*}}$. For convenience,
    using $B_{n-\tilde{t}^{*}}^{\tilde{t}^{*}, q^{*}}$ to represent $B_{n-\tilde{t}^{*}}$, we define $B_{n-\tilde{t}^{*}}^{\tilde{t}^{*}, q^{*}}$ as}
 \[B_{n-\tilde{t}^{*}}^{\tilde{t}^{*}, q^{*}}=(0, b_{q^{*}, n-\tilde{t}^{*}-q^{*}}x^{q^{*}}y^{n-\tilde{t}^{*}-q^{*}}). \]
 \emph{The ordered pair $(0, q)$  determines a vector field $B_{n}^{0, q}$, i.e.,  }
 \[B_{n}^{0, q}=(0, b_{q, n-q}x^{q}y^{n-q}). \]
 \emph{Denoted by $Q_{t, 0, q}$ the semi-quasi homogeneous polynomial determined by the  set $I_{t, 0, q}$.  Moreover, we get that}
    \begin{equation}\label{65}
     (0, Q_{t, 0, q})=B_{n}^{0, q}+\sum\limits_{(\tilde{t}^{*}, q^{*})\in I_{t, 0, q}}B_{n-\tilde{t}^{*}}^{\tilde{t}^{*}, q^{*}}.
    \end{equation}
    \emph{Together with statement} b.1) \emph{of Proposition \ref{pr-4}, we obtain the
     semi-quasi homogeneous but non-semihomogeneous vector field $X_{t, 0, q}$ as follows}
    \begin{equation}\label{67}
      X_{t, 0, q}=(a_{0, n-t}y^{n-t}+a_{1, 0}x, Q_{t, 0, q}).
    \end{equation}
    \emph {For semi-quasi homogenous vector field $X_{t, 0, q}$, the weight vector $w$ and the minimal weight vector $w_{m}$ are given in statement} b) \emph{of Proposition \ref{pr-4}. }\\
    \emph{We note that if index $\lambda=0$, then the system is quasi-homogenous. }
   \item\emph{This algorithm is ended. }
  \end{enumerate}
\end{algorithm}

\begin{remark} If we take the value of $t$ successively from $1$ to $n-2$, and then choose in turn  $q=0, . . . , n$ for each $t$
(keep $\lambda\ne 0$),
then  together with the \eqref{80} and the Algorithm Part \ref{al-4}, we can get all the
   {\itshape PSQHPDS} of degree $n$ and $|A|\geq2$, $|B|\geq1$ with weight degree $d_{1}=1$.
 \end{remark}

\subsection{The algorithm of $|A|\geq2$ and $|B|\geq1$ with $d_{1}>1$}\label{sub-4.2}
 In view of Proposition \ref{pr-4} and Remark \ref{re-3}, one can get that $t\in\{0, . . . , n-2\}$, $\tilde{t}\in\{0, . . . , n-1\}$, $q\in\{0, . . . , n-\tilde{t}\;\}$, $p\in\{0, . . . , n-t-2\}$, $\triangle t\in\{1, . . . , n-t-p-1\}$ and $k\in\{1, . . . , n-t-\triangle t-p\}$ satisfy the following equations
 \begin{equation}\label{49}
 \begin{split}
 t\cdot\tilde{t}&=0, \\
    e_{p}^{t, 0}\left[ 0 \right]&:\left( p-1 \right){{s}_{1}}+\left( n-t-p \right){{s}_{2}}+1-{{d}_{1}}=0, \\
   e_{p}^{t, \triangle t}\left[ k \right]&:\left( k+p-1 \right){{s}_{1}}+\left( n-\triangle t-t-k-p \right){{s}_{2}}+1-{{d}_{1}}=0, \\
    \lambda=&p-q-1-\frac{(t-\tilde{t})k}{\triangle t}.
 \end{split}
 \end{equation}

 For every $t\in\{0, . . . , n-2\}$,  $p\in\{0, . . . , n-t-2\}$ and $\triangle t\in\{1, . . . , n-t-p-1\}$, we define the set of equations $A_{p}^{t}(\triangle t)$ as
\[{A^{t}_{p}}(\triangle t)=\{e_{p}^{t, \triangle t}[k]:k=1, . . . , n-t-\triangle t-p\}. \]
\par Fixed $t\in\{0, . . . , n-2\}$ and $p\in\{0, . . . , n-t-2\}$, we define the following set of equations
 \begin{equation}\label{50}
 E_{p}^{t}=\{e_{p}^{t, 0}[0]\}\bigcup A_{p}^{t}(1)\bigcup. . . \bigcup A_{p}^{t}(n-t-p-1).
 \end{equation}

In order to determine the {\itshape PSQHPDS},
we need to find a set of linear equations that contains the equation $e_{p}^{t, 0}$ and at most one equation of each set
of equations $A_{p}^{t}(\triangle t)$ and such that the set of all these equations define a compatible linear system
 being $s_{1}$ and $s_{2}$ the unknowns, and satisfying that if we add some other equation the new
  linear system be incompatible. As was done in \cite{1}, we say that such linear systems is the \emph{maximal linear systems} of $E_{p}^{t}$.  Each maximal linear systems will determine  semi-quasi homogeneous polynomial $P$.
\begin{remark}\label{re-6}
We note that $A_{0}^{t}(n-t-1)=\{e_{0}^{t, n-t-1}[1]\}$, that   is, $0=d_{1}-1$, in contraction with $d_{1}>1$. Thus, we remove from $E_{0}^{t}$ the set $\{e_{0}^{t, n-t-1}[1]\}$. We denote
\[\varepsilon_{p}^{t}=E_{p}^{t}\;if\;p>0, \;and\;\varepsilon_{0}^{t}=E_{0}^{t}\backslash \{e_{0}^{t, n-t-1}[1]\}. \]
\end{remark}

 If $A_{n-\triangle t_{1}-t}\not\equiv0$ and $A_{n-\triangle t_{2}-t}\not\equiv0$ (the definition of $A_{i}$ can be seen in
  Section \ref{se-2}), we fix $\triangle t_{1}$ and $\triangle t_{2}$ with $\triangle t_{1}\neq\triangle t_{2}$.
  In the next proposition, we shall consider the relation  between
   equations $e_{p}^{t, 0}[0]$, $e_{p}^{t, \triangle t_{1}}[k_{1}]$ and $e_{p}^{t, \triangle t_{2}}[k_{2}]$.
\begin{proposition}\label{pr-10}
Suppose that     system \eqref{1} is a  {\itshape PSQHPDS}
  of degree $n\ge 2$ with weight vector $w=\left( s_{1}, s_{2}, d_{1}, d_{2} \right)$,
 where $d_{1}>1$. Let $t\in\{0, . . . , n-2\}$, $p\in\{0, . . . , n-t\}$ and $ \triangle t_{1}, \triangle t_{2}\in\{1, . . . , n-t-1\}$.
  If the three equations $e_{p}^{t, 0}[0]$, $e_{p}^{t, \triangle t_{1}}[k_{1}]$ and $e_{p}^{t, \triangle t_{2}}[k_{2}]$ are compatible, then
\begin{equation}\label{51}
  k_{1}\triangle t_{2}=k_{2}\triangle t_{1}.
\end{equation}
\end{proposition}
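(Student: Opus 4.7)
The plan is to reduce the three linear equations $e_p^{t,0}[0]$, $e_p^{t,\triangle t_1}[k_1]$, $e_p^{t,\triangle t_2}[k_2]$ to a pair of elimination identities relating $s_1$ and $s_2$ to $\triangle t_1, k_1$ and $\triangle t_2, k_2$ respectively, and then observe that both identities force the same ratio.

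First I would write the three equations explicitly. Treating $d_1-1$ as a single unknown together with $s_1,s_2$, the system is
\begin{align*}
(p-1)s_1+(n-t-p)s_2-(d_1-1)&=0,\\
(k_1+p-1)s_1+(n-\triangle t_1-t-k_1-p)s_2-(d_1-1)&=0,\\
(k_2+p-1)s_1+(n-\triangle t_2-t-k_2-p)s_2-(d_1-1)&=0.
\end{align*}
Subtracting the first equation from the second eliminates both the constant $p-1$ in the coefficient of $s_1$ and the $d_1-1$ term, leaving
\[ k_1 s_1 - (\triangle t_1 + k_1)s_2 = 0, \qquad \text{i.e.,} \qquad k_1(s_1-s_2)=\triangle t_1\, s_2.\]
The analogous subtraction between the first and third equations yields
\[ k_2(s_1-s_2)=\triangle t_2\, s_2.\]

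Since $w=(s_1,s_2,d_1,d_2)$ is a weight vector, $s_2\in\mathbb{N}^{+}$ and by our standing convention $s_1>s_2$, so both $s_2$ and $s_1-s_2$ are strictly positive. Dividing the two relations (or cross-multiplying) gives
\[ \frac{k_1}{\triangle t_1}=\frac{s_2}{s_1-s_2}=\frac{k_2}{\triangle t_2},\]
which is precisely $k_1\triangle t_2=k_2\triangle t_1$.

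There is no real obstacle here: the argument is a two-line elimination, and the only place one has to be careful is justifying that one may divide by $s_1-s_2$ and by $s_2$. Both are guaranteed by the convention $s_1>s_2\geq 1$ at the top of Section 2. Thus the proof reduces to the explicit computation above, and no further case analysis on $p$, $t$ or the degree $n$ is required.
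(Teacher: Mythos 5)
Your proof is correct and follows essentially the same route as the paper: both treat $e_{p}^{t,0}[0]$, $e_{p}^{t,\triangle t_{1}}[k_{1}]$, $e_{p}^{t,\triangle t_{2}}[k_{2}]$ as a linear system in $s_{1}$, $s_{2}$, $d_{1}-1$ and extract the compatibility condition by linear algebra, the paper via vanishing of the $3\times 3$ coefficient determinant and you via explicit row elimination. Your version has the small added benefit of producing the intermediate identities $k_{i}(s_{1}-s_{2})=\triangle t_{i}\,s_{2}$ (the same relation that appears in the proof of Proposition \ref{pr-4}), but the underlying argument is the same.
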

\begin{proof}
Since the three equations $e_{p}^{t, 0}[0]$, $e_{p}^{t, \triangle t_{1}}[k_{1}]$ and $e_{p}^{t, \triangle t_{2}}[k_{2}]$ are compatible,
 then the following linear equations
\begin{equation}\label{52}
  \begin{cases}
    \left( p-1 \right){{s}_{1}}+\left( n-t-p \right){{s}_{2}}+1-{{d}_{1}}=0, \\
    \left( k_{1}+p-1 \right){{s}_{1}}+\left( n-\triangle t_{1}-t-k_{1}-p \right){{s}_{2}}+1-{{d}_{1}}=0, \\
    \left( k_{2}+p-1 \right){{s}_{1}}+\left( n-\triangle t_{2}-t-k_{2}-p \right){{s}_{2}}+1-{{d}_{1}}=0,
  \end{cases}
\end{equation}
with unknowns $s_{1}$, $s_{2}$ and $d_{1}$ have  nontrivial solution only if its
coefficient matrix is non-invertible. That is, $ k_{1}\triangle t_{2}-k_{2}\triangle t_{1}=0$. The proof is finished.
\end{proof}

 If $B_{n-\tilde{t}}\not\equiv0$ and $B_{n-\tilde{t}_{1}}\not\equiv0$, then after  fixing  $\tilde{t}$ and $\tilde{t}_{1}$ with $\tilde{t}\neq\tilde{t}_{1}$,
we know from Proposition \ref{pr-2}  that,  there exist a unique $q\in\{0, . . . , n-\tilde{t}\}$
such that $b_{q, n-\tilde{t}-q}\neq0$ and a unique $q_{1}\in\{0, . . . , n-\tilde{t}_{1}\}$
such that $b_{q_{1}, n-\tilde{t}_{1}-q_{1}}\neq0$. From   equation \eqref{8},
 one can get that $\tilde{t}$, $\tilde{t}_{1}\in\{0, . . . , n-1\}$, $q\in\{0, . . . , n-\tilde{t}\}$
  and $q_{1}\in\{0, . . . , n-\tilde{t}_{1}\}$ satisfy the equations $\pi_{q}^{\tilde{t}, 0}[0]$ and $\pi_{q_{1}}^{\tilde{t}_{1}, 0}[0]$.

Now we shall consider the relation  between the equations $e_{p}^{t, 0}[0]$, $e_{p}^{t, \triangle t_{1}}[k_{1}]$, $\pi_{q}^{\tilde{t}, 0}[0]$ and $\pi_{q_{1}}^{\tilde{t}_{1}, 0}[0]$.
\begin{proposition}\label{pr-12}
Suppose that   system \eqref{1} is a    {\itshape PSQHPDS} of degree $n\ge 2$
with weight vector $w=\left( s_{1}, s_{2}, d_{1}, d_{2} \right)$, where $d_{1}>1$.
Let $t\in\{0, . . . , n-2\}$, $p\in\{0, . . . , n-t\}$, $\triangle t_{1}\in\{1, . . . , n-t-1\}$,
 $k_{1}\in\{1, . . . , n-t-\triangle t_{1}-p\}$, $\tilde{t}\in\{0, . . . , n-1\}$ and $q\in\{0, . . . , n-\tilde{t}\;\}$.
  If the four equations $e_{p}^{t, 0}[0]$, $e_{p}^{t, \triangle t_{1}}[k_{1}]$, $\pi_{q}^{\tilde{t}, 0}[0]$ and
  $\pi_{q_{1}}^{\tilde{t}_{1}, 0}[0]$ are compatible, then
\begin{equation}\label{55}
  q-q_{1}=\frac{(\tilde{t}-\tilde{t}_{1})k_{1}}{\triangle t_{1}}
\end{equation}
\end{proposition}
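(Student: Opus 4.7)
The plan is to view the hypothesised compatibility as a linear system in the four unknowns $s_1, s_2, d_1, d_2$ and exploit its block structure: the two equations $e_p^{t,0}[0]$ and $e_p^{t,\triangle t_1}[k_1]$ involve $d_1$ but not $d_2$, while $\pi_q^{\tilde{t},0}[0]$ and $\pi_{q_1}^{\tilde{t}_1,0}[0]$ involve $d_2$ but not $d_1$. Rather than expanding a full $4 \times 4$ determinant in the spirit of the proof of Proposition \ref{pr-10}, I would carry out two independent pairwise eliminations, each producing a scalar linear relation between $s_1$ and $s_2$, and then combine them.

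Subtracting $e_p^{t,0}[0]$ from $e_p^{t,\triangle t_1}[k_1]$ cancels both the $d_1$-term and the additive constant, leaving $k_1 s_1 - (\triangle t_1 + k_1) s_2 = 0$, which by positivity of $s_2$ rearranges to $s_1 - s_2 = (\triangle t_1 / k_1)\, s_2$. Analogously, subtracting $\pi_q^{\tilde{t},0}[0]$ from $\pi_{q_1}^{\tilde{t}_1,0}[0]$ cancels $d_2$ and the constant, leaving $(q_1 - q) s_1 + \bigl[(\tilde{t} - \tilde{t}_1) - (q_1 - q)\bigr] s_2 = 0$, i.e.\ $(q - q_1)(s_1 - s_2) = (\tilde{t} - \tilde{t}_1)\, s_2$.

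To finish, I would substitute the first relation into the second to eliminate $s_1 - s_2$, obtaining $(q - q_1)(\triangle t_1 / k_1) s_2 = (\tilde{t} - \tilde{t}_1) s_2$, and divide through by the positive quantity $s_2$ to extract the claimed identity \eqref{55}. There is no substantive obstacle here; the only care needed is bookkeeping of signs across the two subtractions, and using that positivity of $s_2$, built into the definition of a weight vector, licenses the final cancellation. One could alternatively mimic Proposition \ref{pr-10} by setting the $4\times 4$ coefficient determinant to zero and verifying that it factors as $s_2\,\bigl[(q-q_1)\triangle t_1 - (\tilde{t}-\tilde{t}_1)k_1\bigr]$ up to sign, but the pairwise-elimination route is shorter and makes the structural reason for \eqref{55} transparent.
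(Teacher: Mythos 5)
Your proof is correct. The paper's own argument assembles the four equations into the $4\times4$ homogeneous linear system \eqref{53} in the unknowns $s_{1}$, $s_{2}$, $d_{1}-1$, $d_{2}-1$ and invokes the vanishing of the coefficient determinant as a necessary condition for a nontrivial solution, which expands to $(q-q_{1})\triangle t_{1}-(\tilde{t}-\tilde{t}_{1})k_{1}=0$. Your pairwise elimination reaches the same identity by a slightly different and arguably cleaner route: you use the fact that an actual weight vector with $s_{2}>0$ satisfies all four equations, subtract within each $d_{i}$-block to obtain $k_{1}(s_{1}-s_{2})=\triangle t_{1}s_{2}$ and $(q-q_{1})(s_{1}-s_{2})=(\tilde{t}-\tilde{t}_{1})s_{2}$, and then cancel $s_{2}$. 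This is exactly the row reduction one would perform to evaluate the paper's determinant, but phrased as a direct derivation it sidesteps the (mildly delicate) ``nontrivial solution only if non-invertible'' framing and makes explicit where positivity of $s_{2}$ enters. One small slip in your closing aside: the $4\times4$ coefficient determinant is a pure number in $p,k_{1},\triangle t_{1},q,q_{1},\tilde{t},\tilde{t}_{1}$ and equals $\pm\bigl[(q-q_{1})\triangle t_{1}-(\tilde{t}-\tilde{t}_{1})k_{1}\bigr]$ with no factor of $s_{2}$; this does not affect your main argument, which you carried out by elimination rather than by that determinant.
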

\begin{proof}
Since  equations $e_{p}^{t, 0}[0]$, $e_{p}^{t, \triangle t_{1}}[k_{1}]$,
 $\pi_{q}^{\tilde{t}, 0}[0]$ and $\pi_{q_{1}}^{\tilde{t}_{1}, 0}[0]$ are compatible,  the following linear equations
\begin{equation}\label{53}
\begin{cases}
\left( p-1 \right){{s}_{1}}+\left( n-t-p \right){{s}_{2}}+1-{{d}_{1}}=0, \\
    \left( k_{1}+p-1 \right){{s}_{1}}+\left( n-\triangle t_{1}-t-k_{1}-p \right){{s}_{2}}+1-{{d}_{1}}=0, \\
    qs_{1}+(n-\tilde{t}-q-1)s_{2}+1-d_{2}=0, \\
    q_{1}s_{1}+(n-\tilde{t}_{1}-q_{1}-1)s_{2}+1-d_{2}=0,
\end{cases}
\end{equation}
with unknowns $s_{1}$, $s_{2}$, $d_{1}-1$ and $d_{2}$ have nontrivial solution only if its coefficient matrix is non-invertible,
that is, $(q-q_{1})\triangle t_{1}-(\tilde{t}-\tilde{t}_{1})k_{1}=0$. Hence Proposition \ref{pr-12} is proved.
\end{proof}
\begin{remark}\label{re-7}
Using Remak \ref{re-3} and Proposition \ref{pr-10}, one can easily get that $\lambda=p-q-1-\frac{(t-\tilde{t})k}{\triangle t}=p-q_{1}-1-\frac{(t-\tilde{t}_{1})k_{1}}{\triangle t_{1}}$ with $d_{1}>1$.
Therefore,  $\lambda$ is neither dependent on $q,\tilde{t}$, nor on $s_1,s_2,d_1, d_2$.\end{remark}

 By applying   Proposition \ref{pr-12}, we can determine the semi-quasi homogeneous polynomial $Q$.
  Moreover, we can obtain a semi-quasi homogeneous  polynomial vector field $X=(P, Q)$ of degree $n\geq2$
  with weight vector $w=(s_{1}, s_{2}, d_{1}, d_{2})$,   $d_{1}>1$.

 Based on the above discussion, we will provide an algorithm for determining  {\itshape PSQHPDS}
  with $|A|\geq2$, $|B|\geq1$ and with weight degree $d_{1}>1$. We fix $n\geq2$, $t\in\{0, . . . , n-2\}$ and $p\in\{0, . . . , n-t-2\}$.
For convenience,  we define the following set
\begin{equation}\label{101}
\tilde{ I}= \bigcup\limits_{\tilde{t}\in\{0, . . . , n-1\}\;\text{such that}\;t\cdot \tilde{t}=0}\tilde{I}(\tilde{t}),
\end{equation}
for each  fixed $t\in\{0, . . . , n-2\}$, where $\tilde{I}(\tilde{t})=\{\tilde{t}\}\times\{0, . . . , n-\tilde{t}\}$.
\begin{algorithm}\label{al-1}
 \mbox{}\par
  \begin{enumerate}[(step 1). ]
  \item\emph{We take the equation $e_{p}^{t, 0}[0]$ as the first equation. }
  \item\emph{We fix $\triangle t\in\{1, . . . , n-t-p-1\}$ and an equation of $A_{p}^{t}(\triangle t)\bigcap \varepsilon_{p}^{t}$ , that is, an
equation of the form $e_{p}^{t, \triangle t}[k]$ with a $k\in\{1, . . . , n-t-\triangle t-p\}$. }
  \item\emph{For each $\triangle t^{*}\in\{1, . . . , n-t-p-1\}\backslash\{\triangle t\}$,
   if there exist $k_{\triangle t^{*}}\in\{1, . . . , n-t-\triangle t^{*}-p\}$ satisfying \eqref{51}, i.e.,
    $k_{\triangle t^{*}}\triangle t=k\triangle t^{*}$, then we have an equation $e_{p}^{t, \triangle t^{*}}[k_{\triangle t^{*}}]$. }
  \item\emph{Working in} step 1, 2 \emph{and} 3, \emph{we can get the following set of equations}
  \begin{equation}\label{56}
    \varepsilon_{p}^{t, \triangle t, k}:=\bigcup\limits_{\triangle t^{*}\in\{1, . . . , n-t-p-1\}\backslash\{\triangle t\}}\{e_{p}^{t, \triangle t^{*}}[k_{\triangle t^{*}}]:k_{\triangle t^{*}}\triangle t=k\triangle t^{*}\}\bigcup\{e_{p}^{t, 0}[0], e_{p}^{t, \triangle t}[k]\}.
  \end{equation}
  \emph{Form   equation \eqref{25}, we know that each equation $e_{p}^{t, \triangle t^{*}}[k_{\triangle t^{*}}]$ determines
a vector field $A_{n-\triangle t^{*}-t}$. For convenience, using $A_{n-\triangle t^{*}-t}^{\triangle t^{*}, k_{\triangle t^{*}}}$
to represent $A_{n-\triangle t^{*}-t}$, we define $A_{n-\triangle t^{*}-t}^{\triangle t^{*}, k_{\triangle t^{*}}}$ as}
\[A_{n-\triangle t^{*}-t}^{\triangle t^{*}, k_{\triangle t^{*}}}=
\left(a_{p+k_{\triangle t^{*}}, n-\triangle t^{*}-t-p-k_{\triangle t^{*}}}x^{p+k_{\triangle t^{*}}}y^{n-\triangle t^{*}-t-p-k_{\triangle t^{*}}}, 0\right). \]
\emph{The equation $e_{p}^{t, \triangle t}[k]$ and equation $e_{p}^{t, 0}[0]$ determine vector field $A_{n-\triangle t-t}$ and $A_{n-t}$,
respectively, that is, }
\[A_{n-\triangle t-t}^{\triangle t, k}=\left(a_{p+k, n-\triangle t-t-p-k}x^{p+k}y^{n-\triangle t-t-p-k}, 0\right)\] \emph{and}
\[A_{n-t}=(a_{p, n-t-p}x^{p}y^{n-t-p}, 0). \]
\emph{Denoted by $P_{t, p, \triangle t, k}$ the semi-quasi homogeneous polynomial
determined by the equations set $\varepsilon_{p}^{t, \triangle t, k}$. We have}
\begin{equation}\label{57}
  (P_{t, p, \triangle t, k}, 0)=A_{n-t}+A_{n-\triangle t-t}^{\triangle t, k}
  +\sum\limits_{\triangle t^{*}\in\{1, . . . , n-t-p-1\}\backslash\{\triangle t\}\;\mbox{and} \;k_{\triangle t^{*}}\triangle t=k\triangle t^{*}}A_{n-\triangle t^{*}-t}^{\triangle t^{*}, k_{\triangle t^{*}}}.
\end{equation}
  \item\emph{Chose an ordered pair $(\tilde{t}, q)\in \tilde{I}$,  we calculate the following set}
  \begin{small}
  \begin{equation}\label{58}
    I_{\tilde{t}, q, \triangle t, k}:=\{(\tilde{t}^{*}, q^{*}):(q-q^{*})\triangle t=(\tilde{t}-\tilde{t}^{*})k,
     \tilde{t}^{*}\in\{0, . . . , n-1\}\backslash\{\tilde{t}\}\;\mbox{and} \;q^{*}\in\{0, . . . , n-\tilde{t}^{*}\}\}.
  \end{equation}
  \end{small}

  \item\emph{We remove from $\tilde{I}$ the set $I_{\tilde{t}, q, \triangle t, k}$, and then  go back to} step 5 \emph{and repeat this process until the set $\tilde{I}=\varnothing$. }
 \item\emph{ From  equation \eqref{24}, we know that each ordered pair $(\tilde{t}^{*}, q^{*})\in I_{\tilde{t}, q, \triangle t, k}$
  determines a vector field $B_{n-\tilde{t}^{*}}$. For convenience,  using $B_{n-\tilde{t}^{*}}^{\tilde{t}^{*}, q^{*}}$
  to represent $B_{n-\tilde{t}^{*}}$, that is, we define $B_{n-\tilde{t}^{*}}^{\tilde{t}^{*}, q^{*}}$ as}
 \[B_{n-\tilde{t}^{*}}^{\tilde{t}^{*}, q^{*}}=\left(0, b_{q^{*}, n-\tilde{t}^{*}-q^{*}}x^{q^{*}}y^{n-\tilde{t}^{*}-q^{*}}\right). \]
 \emph{The ordered pair $(\tilde{t}, q)$  determines a vector field $B_{n-\tilde{t}}^{\tilde{t}, q}$, i.e.,  }
 \[B_{n-\tilde{t}}^{\tilde{t}, q}=(0, b_{q, n-\tilde{t}-q}x^{q}y^{n-\tilde{t}-q}). \]
 \emph{Denoted by $Q_{\tilde{t}, q, \triangle t, k}$ the semi-quasi homogeneous polynomial determined by the set $I_{\tilde{t}, q, \triangle t, k}$ .
 Moreover, we get that}
 \begin{equation}\label{59}
  (0, Q_{\tilde{t}, q, \triangle t, k})=B_{n-\tilde{t}}^{\tilde{t}, q}+\sum\limits_{(\tilde{t}^{*}, q^{*})\in I_{\tilde{t}, q, \triangle t, k}}B_{n-\tilde{t}^{*}}^{\tilde{t}^{*}, q^{*}}.
 \end{equation}
 \emph{Together with \eqref{57}, we   obtain the semi-quasi homogeneous   vector field $X_{t, \tilde{t}, p, q, \triangle t, k}$ as follows}
 \begin{equation}\label{61}
   X_{t, \tilde{t}, p, q, \triangle t, k}=(P_{t, p, \triangle t, k}, Q_{\tilde{t}, q, \triangle t, k}).
 \end{equation}
 \emph{Using statement $c)$ of Proposition \ref{pr-4} and Remark \ref{re-3},
 we can obtain the vector field $X_{t, \tilde{t}, p, q, \triangle t, k}$ corresponding to
 weight vector $w$, the minimal weight vector $w_{m}$ and the index $\lambda$. }\\
 \emph{We note that if index $\lambda=0$, then system is quasi-homogenous and should be excluded. }
  \item\emph{We remove from $\varepsilon_{p}^{t}$ the equations $\varepsilon_{p}^{t, \triangle t, k}\backslash\{e_{p}^{t, 0}[0]\}$. }
  \item\emph{We return to} step 1. \emph{We repeat } step 1 \emph{to} step 8 \emph{until $\varepsilon_{p}^{t}=\{e_{p}^{t, 0}[0]\} $. }
  \end{enumerate}
\end{algorithm}
\begin{remark}\label{re-8}Fixed $t$ and $p$, one can use the equation $e_{p}^{t, 0}[0]$ and
the set $I_{\tilde{t}, q, \triangle t, k}$ to obtain all the semi-quasi homogeneous
  vector field $X_{t, \tilde{t}, p, q, \triangle t, k}$ with $d_{1}>1$.
  Firstly,  take the value of $t=0, . . . , n-2$ in proper order, and then
   successively take the value of $p=0, . . . , n-t-2$ for each $t$.
   In this way,  everyone can   obtain all the  {\itshape PSQHPDS} of degree $n$ for the case that $|A|\geq2$, $|B|\geq1$ and $d_{1}>1$.
\end{remark}
\begin{remark}\label{re-9}
In particular, if $\lambda=0$, then the function of Algorithm Part \ref{al-1} provides quasi homogeneous but non-homogeneous polynomial vector field the same as algorithm given by \cite{1}.
\end{remark}
\subsection{The algorithm of $|A|=1$ and $|B|\geq2$}
 From Proposition \ref{pr-8} and Remark \ref{re-5},
 we can obtain that $\tilde{t}\in\{0, . . . , n-2\}$, $t\in\{0, . . . , n-1\}$, $p\in\{0, . . . , n-t\}$,
  $q\in\{0, . . . , n-\tilde{t}-2\}$, $\triangle\tilde{t}\in\{1, . . . , n-\tilde{t}-q-1\}$
   and $k\in\{1, . . . , n-\tilde{t}-\triangle\tilde{t}-q\}$  satisfy the following equations
\begin{equation}\label{68}
 \begin{split}
 t\cdot\tilde{t}&=0, \\
    \pi _{q}^{\tilde{t}, 0}\left[ 0 \right]&:q{{s}_{1}}+\left( n-\tilde{t}-q-1 \right){{s}_{2}}+1-{{d}_{2}}=0, \\
   \pi _{q}^{\tilde{t}, \triangle\tilde{t}}\left[ k \right]&:\left( k+q \right){{s}_{1}}+\left( n-\triangle\tilde{t}-\tilde{t}-k-q-1 \right){{s}_{2}}+1-{{d}_{2}}=0, \\
   (n-t&-1)k\geq\triangle\tilde{t}\;\text{if}\;p=0, \\
    \lambda=&p-q-1-\frac{(t-\tilde{t}\;)k}{\triangle\tilde{t}}.
 \end{split}
 \end{equation}

 For every $\tilde{t}\in\{0, . . . , n-2\}$,  $q\in\{0, . . . , n-\tilde{t}-2\}$ and $\triangle \tilde{t}\in\{1, . . . , n-\tilde{t}-q-1\}$, we define the set of equations $B_{q}^{\tilde{t}}(\triangle \tilde{t})$ as
\[B_{q}^{\tilde{t}}\left( \triangle\tilde{t} \right)=\left\{ \pi _{q}^{\tilde{t}, \triangle\tilde{t}}\left[ k \right]:k=1, \ldots , n-\triangle\tilde{t}-\tilde{t}-q \right\}. \]

Fixed $\tilde{t}\in\{0, . . . , n-2\}$ and $q\in\{0, . . . , n-\tilde{t}-2\}$, we define the following set of equations
\begin{equation}\label{69}
 E_{q}^{\tilde{t}}=\{ \pi _{q}^{\tilde{t}, 0}\left[ 0 \right]\}\bigcup B_{q}^{\tilde{t}}\left( 1 \right)\bigcup \cdots \bigcup B_{q}^{\tilde{t}}\left( n-\tilde{t}-q-1 \right).
\end{equation}

 In the same way as in the Subsection \ref{sub-4.2}, we can also define the \emph{maximal linear systems} of $E_{q}^{\tilde{t}}$.
 Each maximal linear system  will determine a  semi-quasi homogeneous polynomial $Q$.

 Similar to Proposition \ref{pr-10} and Proposition \ref{pr-12}, we have the following Proposition \ref{pr-13} and \ref{pr-14}, respectively.
\begin{proposition}\label{pr-13}
  Suppose that  system \eqref{1} is a   {\itshape PSQHPDS} of degree $n\ge 2$
  with weight vector $w=\left( s_{1}, s_{2}, d_{1}, d_{2} \right)$.
  Let $\tilde{t}\in\{0, . . . , n-2\}$, $q\in\{0, . . . , n-\tilde{t}\}$
  and $ \triangle \tilde{t}_{1}, \triangle \tilde{t}_{2}\in\{1, . . . , n-\tilde{t}-1\}$.
   If the three equations $ \pi _{q}^{\tilde{t}, 0}\left[ 0 \right]$,
   $\pi _{q}^{\tilde{t}, \triangle\tilde{t}_{1}}\left[ k_{1} \right]$
   and $\pi _{q}^{\tilde{t}, \triangle\tilde{t}_{2}}\left[ k_{2} \right]$ are compatible, then
\begin{equation}\label{70}
  k_{1}\triangle\tilde{ t}_{2}=k_{2}\triangle \tilde{t}_{1}.
\end{equation}
\end{proposition}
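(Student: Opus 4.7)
The plan is to follow the same strategy used in the proof of Proposition \ref{pr-10}: if the three linear equations $\pi_q^{\tilde{t},0}[0]$, $\pi_q^{\tilde{t},\triangle\tilde{t}_1}[k_1]$ and $\pi_q^{\tilde{t},\triangle\tilde{t}_2}[k_2]$ admit a common solution with $s_1,s_2\in\mathbb{N}^+$ and $d_2\in\mathbb{N}^+$, then the associated homogeneous system in the unknowns $s_1$, $s_2$, $d_2-1$ has a nontrivial solution, and therefore its $3\times 3$ coefficient matrix must be singular. This is the same device that drove Propositions \ref{pr-10} and \ref{pr-12}, and it works here essentially unchanged because each of the three $\pi$-equations has the common form $\alpha s_1+\beta s_2-(d_2-1)=0$.

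To carry this out, I would write the coefficient matrix explicitly, noting that its last column is $(-1,-1,-1)^{\top}$, and then apply elementary row operations: subtracting the first row (coming from $\pi_q^{\tilde{t},0}[0]$) from the second and third rows kills the $(d_2-1)$-column and leaves the two rows $(k_1,\,-\triangle\tilde{t}_1-k_1,\,0)$ and $(k_2,\,-\triangle\tilde{t}_2-k_2,\,0)$. Expanding along the last column reduces the vanishing condition to
\[
\det\begin{pmatrix} k_1 & -\triangle\tilde{t}_1-k_1 \\ k_2 & -\triangle\tilde{t}_2-k_2 \end{pmatrix} = k_2\,\triangle\tilde{t}_1 - k_1\,\triangle\tilde{t}_2 = 0,
\]
which is exactly the claimed identity $k_1\triangle\tilde{t}_2=k_2\triangle\tilde{t}_1$.

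There is no real obstacle: the argument is structurally identical to the proof of Proposition \ref{pr-10}, with the roles of the first and second components of the vector field interchanged (so $(t,\triangle t,k,p,d_1)$ is replaced by $(\tilde{t},\triangle\tilde{t},k,q,d_2)$). The only small point worth verifying is that the compatibility in question is genuinely the compatibility of a homogeneous system in three unknowns rather than two; this is justified because $d_2-1$ is an unknown shared by all three equations, and it is precisely this shared unknown that we eliminate in the row subtraction above.
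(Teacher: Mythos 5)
Your proposal is correct and follows essentially the same route as the paper: the paper proves Proposition \ref{pr-13} by declaring it "completely analogous to the proof of Proposition \ref{pr-10}", which is exactly the singular-coefficient-matrix argument you carry out (and your explicit row reduction confirming $k_2\triangle\tilde{t}_1-k_1\triangle\tilde{t}_2=0$ is accurate). If anything, you supply more detail than the paper does, in particular the observation that one should treat $d_2-1$ as the third unknown so that the system is genuinely homogeneous.
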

\begin{proof}
  The proof is completely analogous to the proof of Proposition \ref{pr-10} and is omitted for the sake of brevity.
\end{proof}
\begin{proposition}\label{pr-14}
  Suppose that   system \eqref{1} is a  {\itshape PSQHPDS} of degree $n\ge 2$
   with weight vector $w=\left( s_{1}, s_{2}, d_{1}, d_{2} \right)$.
    Let $\tilde{t}\in\{0, . . . , n-2\}$, $q\in\{0, . . . , n-\tilde{t}\}$,
     $\triangle \tilde{t}_{1}\in\{1, . . . , n-\tilde{t}-1\}$,
$k_{1}\in\{1, . . . , n-\tilde{t}-\triangle \tilde{t}_{1}-q\}$, $t\in\{0, . . . , n-1\}$
 and $p\in\{0, . . . , n-t\}$.
If the four equations $ \pi _{q}^{\tilde{t}, 0}\left[ 0 \right]$,
 $\pi _{q}^{\tilde{t}, \triangle\tilde{t}_{1}}\left[ k_{1} \right]$ , $e_{p}^{t, 0}[0]$ and $e_{p_{1}}^{t_{1}, 0}[0]$  are compatible, then
\begin{equation}\label{71}
  p-p_{1}=\frac{(t-t_{1})k_{1}}{\triangle \tilde{t}_{1}}
\end{equation}
\end{proposition}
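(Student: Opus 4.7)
The plan is to mirror the proof of Proposition \ref{pr-12} with the roles of $P$ and $Q$ (equivalently, of the $e$-equations and the $\pi$-equations) interchanged. First I would rewrite the four hypothesized equations as a homogeneous linear system in the four unknowns $s_{1}$, $s_{2}$, $d_{1}-1$, $d_{2}-1$. Observe that $\pi_{q}^{\tilde{t},0}[0]$ and $\pi_{q}^{\tilde{t},\triangle\tilde{t}_{1}}[k_{1}]$ each carry a $0$ coefficient on $d_{1}-1$, while $e_{p}^{t,0}[0]$ and $e_{p_{1}}^{t_{1},0}[0]$ each carry a $0$ coefficient on $d_{2}-1$. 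Compatibility together with the requirement that $s_{1},s_{2}>0$ forces the existence of a nontrivial solution, hence the $4\times 4$ coefficient matrix must be singular.

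Next I would exploit the block structure just noted to compute this determinant cleanly. Subtracting $\pi_{q}^{\tilde{t},0}[0]$ from $\pi_{q}^{\tilde{t},\triangle\tilde{t}_{1}}[k_{1}]$ kills the $d_{2}-1$ entry of the second row and leaves the differences $(k_{1},\,-\triangle\tilde{t}_{1}-k_{1},\,0,\,0)$; subtracting $e_{p}^{t,0}[0]$ from $e_{p_{1}}^{t_{1},0}[0]$ kills the $d_{1}-1$ entry of the fourth row and leaves $(p_{1}-p,\,t-t_{1}+p-p_{1},\,0,\,0)$. Expanding successively along the third and fourth columns (each of which now has exactly one nonzero entry) reduces the determinant to the $2\times 2$ minor
\[
\det\begin{pmatrix} k_{1} & -\triangle\tilde{t}_{1}-k_{1} \\ p_{1}-p & t-t_{1}+p-p_{1} \end{pmatrix}.
\]
Setting this to zero and simplifying gives $k_{1}(t-t_{1})+\triangle\tilde{t}_{1}(p_{1}-p)=0$, which is exactly the desired relation \eqref{71}.

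Conceptually there is no new obstacle: the argument is entirely parallel to Propositions \ref{pr-10}--\ref{pr-13}. The only place requiring genuine care is the $4\times 4$ determinant expansion, where sign errors in cofactor expansion or in the final $2\times 2$ step would produce a spurious factor; I would track signs explicitly by doing the column expansion twice in succession rather than expanding directly, precisely as above, so that the symmetry between $(k_{1},\triangle\tilde{t}_{1})$ on the $Q$-side and $(p-p_{1},t-t_{1})$ on the $P$-side is manifest and the final identity emerges without sign ambiguity.
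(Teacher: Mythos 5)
Your proposal is correct and takes essentially the same route as the paper: the authors omit this proof, stating it is analogous to that of Proposition \ref{pr-12}, which is exactly the argument you give — compatibility of the four equations forces the $4\times 4$ coefficient matrix of the homogeneous system in $s_{1}$, $s_{2}$, $d_{1}-1$, $d_{2}-1$ to be singular. Your explicit row reduction and cofactor expansion correctly reduce the determinant to $k_{1}(t-t_{1})-\triangle\tilde{t}_{1}(p-p_{1})=0$, which is precisely \eqref{71}.
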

\begin{proof}
  The proof is analogous to the proof of Proposition \ref{pr-12} and is omitted.
\end{proof}
\begin{remark}\label{re-11}
  Using Remak \ref{re-5} and Proposition \ref{pr-13}, one can easily get that
  \begin{equation}\label{liangadd2}\lambda=p-q-1-\frac{(t-\tilde{t}\;)k}{\triangle\tilde{t}}=p_{1}-q-1-\frac{(t_{1}-\tilde{t}\;)k_{1}}{\triangle\tilde{t}_{1}}.\end{equation}
   Obviously,   $|A|=1$ and $|B|\geq2$ if and only if for the equation $(p-p_{1})\triangle \tilde{t}_{1}=(t-t_{1})k_{1}$
    with unknowns $p_{1}$ and $t_{1}$  has a unique non-negative integer solution $t_{1}=t$, $p_{1}=p$.
Moreover, from \eqref{liangadd2}
it is clear that  $\lambda$ is neither dependent on $p,t$, nor on $s_1,s_2,d_1, d_2$.
\end{remark}

 Using Proposition \ref{pr-14}, we can determine the semi-quasi homogeneous polynomial $P$.
  Moreover, we can obtain a semi-quasi homogeneous   polynomial vector field $X=(P, Q)$ of degree $n\geq2$
  with weight vector $w=(s_{1}, s_{2}, d_{1}, d_{2})$
  as follows.

 For every $\triangle \tilde{t}\in\{1, . . . , n-\tilde{t}-q-1\}$, we define the set  $I_{\tilde{t}, q}(\triangle t)$ as
\begin{equation}\label{75}
  I_{\tilde{t}, q}(\triangle \tilde{t})=\{\triangle \tilde{t}\;\}\times\{1, . . . , n-\tilde{t}-\triangle \tilde{t}-q\},
\end{equation}
where $\times$ is  the Cartesian product.
Furthermore, fixed $\tilde{t}\in\{0, . . . , n-2\}$ and $q\in\{0, . . . , n-\tilde{t}-2\}$, we  introduce the following set
\begin{equation}\label{76}
  I_{\tilde{t}, q}= I_{\tilde{t}, q}(1)\bigcup. . . \bigcup I_{\tilde{t}, q}(n-\tilde{t}-q-1).
\end{equation}

On the other hand, for each fixed $\tilde{t}\in\{0, . . . , n-2\}$, we define the following set
\begin{equation}\label{89}
 I= \bigcup\limits_{t\in\{0, . . . , n-1\}\;\text{such that}\;t\cdot \tilde{t}=0}I(t),
\end{equation}
where $I(t)=\{t\}\times\{0, . . . , n-t\}$.

Similar to Algorithm Part \ref{al-1}, fixed $n\geq2$, $\tilde{t}\in\{0, . . . , n-2\}$ and $q\in\{0, . . . , n-\tilde{t}-2\}$,
we can construct Algorithm Part \ref{al-2}.
\begin{algorithm}\label{al-2}
\mbox{}\par
\begin{enumerate}[(step 1). ]
\item\emph{We choose ordered pair $(\triangle\tilde{t}, k)\in I_{\tilde{t}, q}$. }
\item\emph{For every ordered pair $(t, p)\in I$, we calculate the following set}
  \begin{small}
     \begin{equation}\label{74}
    I_{t, p, \triangle \tilde{t}, k}:=\{(t^{*}, p^{*}):(p-p^{*})\triangle \tilde{t}=(t-t^{*})k, t^{*}\in\{0, . . . , n-1\}\backslash\{t\}\;\mbox{and}\;p^{*}\in\{0, . . . , n-t^{*}\}\}.
  \end{equation}
  \end{small}
  \emph{Furthermore, we can get the following set}
  \begin{equation}\label{90}
    S_{\triangle \tilde{t}, k}:=\{(t, p):(t, p)\in I\;\mbox{such \ that}\;I_{t, p, \triangle \tilde{t}, k}=\varnothing\;and\;(n-t-1)k\geq\triangle\tilde{t}\;if\;p=0\}.
  \end{equation}
  \emph{If the set $S_{\triangle \tilde{t}, k}\neq\varnothing$, then go to }step 3; \emph{if not then jump to} step 6.
\item\emph{We take   equation $ \pi _{q}^{\tilde{t}, 0}\left[ 0 \right]$ as the first equation. The ordered pair $(\triangle\tilde{t}, k)$ determines an equation $\pi_{q}^{\tilde{t}, \triangle \tilde{t}}[k]$. }
\item\emph{If there exist a ordered pair $(\triangle \tilde{t}^{*}, k_{\triangle \tilde{t}^{*}})\in I_{\tilde{t}, q}\backslash\{(\triangle\tilde{t}, k)\}$ satisfying \eqref{70}, i.e.,  $k_{\triangle \tilde{t}^{*}}\triangle \tilde{t}=k\triangle \tilde{t}^{*}$, then we have an equation $\pi_{q}^{\tilde{t}, \triangle \tilde{t}^{*}}[k_{\triangle \tilde{t}^{*}}]$. }
\item\emph{Working in} step 3 \emph{and} 4, \emph{we can get the following set of equations}
  \begin{equation}\label{72}
    \tau_{q}^{\tilde{t}, \triangle \tilde{t}, k}:=\bigcup\limits_{I_{\tilde{t}, q}\backslash\{(\triangle\tilde{t}, k)\}}\{\pi_{q}^{\tilde{t}, \triangle \tilde{t}^{*}}[k_{\triangle \tilde{t}^{*}}]:k_{\triangle \tilde{t}^{*}}\triangle\tilde{ t}=k\triangle \tilde{t}^{*}\}\bigcup\{\pi_{q}^{\tilde{t}, 0}[0], \pi_{q}^{\tilde{t}, \triangle \tilde{t}}[k]\}.
  \end{equation}
  \emph{Form   equation \eqref{41}, we know that each equation $\pi_{q}^{\tilde{t}, \triangle \tilde{t}^{*}}[k_{\triangle \tilde{t}^{*}}]$ determines
a vector field $B_{n-\triangle \tilde{t}^{*}-\tilde{t}}$. For convenience, we use $B_{n-\triangle \tilde{t}^{*}-\tilde{t}}^{\triangle \tilde{t}^{*}, k_{\triangle \tilde{t}^{*}}}$ to represent $B_{n-\triangle \tilde{t}^{*}-\tilde{t}}$. That is, we define $B_{n-\triangle \tilde{t}^{*}-\tilde{t}}^{\triangle \tilde{t}^{*}, k_{\triangle \tilde{t}^{*}}}$ as}
\[B_{n-\triangle \tilde{t}^{*}-\tilde{t}}^{\triangle \tilde{t}^{*}, k_{\triangle \tilde{t}^{*}}}=(0, b_{q+k_{\triangle \tilde{t}^{*}}, n-\triangle \tilde{t}^{*}-\tilde{t}-q-k_{\triangle \tilde{t}^{*}}}x^{q+k_{\triangle \tilde{t}^{*}}}y^{n-\triangle \tilde{t}^{*}-\tilde{t}-q-k_{\triangle\tilde{ t}^{*}}}). \]
\emph{The equation $\pi_{q}^{\tilde{t}, \triangle \tilde{t}}[k]$ and equation $\pi_{q}^{\tilde{t}, 0}[0]$ determine vector field $B_{n-\triangle \tilde{t}-\tilde{t}}$ and $B_{n-\tilde{t}}$,  respectively, that is, }
\[B_{n-\triangle \tilde{t}-\tilde{t}}^{\triangle \tilde{t}, k}=(0, b_{q+k, n-\triangle \tilde{t}-\tilde{t}-q-k}x^{q+k_{\triangle \tilde{t}}}y^{n-\triangle \tilde{t}-\tilde{t}-q-k})\] \emph{and}
\[B_{n-\tilde{t}}=(0, b_{q, n-\tilde{t}-q}x^{q}y^{n-\tilde{t}-q}). \]
\emph{Denoted by $Q_{\tilde{t}, q, \triangle \tilde{t}, k}$ the semi-quasi homogeneous polynomial
determined by the  equations set $\tau_{q}^{\tilde{t}, \triangle \tilde{t}, k}$. Moreover, we have}
\begin{equation}\label{73}
  (0, Q_{\tilde{t}, q, \triangle \tilde{t}, k})=B_{n-\tilde{t}}+B_{n-\triangle \tilde{t}-\tilde{t}}^{\triangle \tilde{t}, k}+\sum\limits_{(\triangle \tilde{t}^{*}, k_{\triangle \tilde{t}^{*}})\in I_{\tilde{t}, q}\backslash\{(\triangle\tilde{t}, k)\}\;and\;k_{\triangle \tilde{t}^{*}}\triangle \tilde{t}=k\triangle \tilde{t}^{*}}B_{n-\triangle \tilde{t}^{*}-\tilde{t}}^{\triangle \tilde{t}^{*}, k_{\triangle \tilde{t}^{*}}}.
\end{equation}
\emph{Each  ordered pair $(t, p)\in S_{\triangle \tilde{t}, k}$ determines a vector field $A_{n-t}$, i.e.,  }
\begin{equation}\label{77}
  A_{n-t}=(a_{p, n-t-p}x^{p}y^{n-t-p}, 0).
\end{equation}
\emph{Together with \eqref{73}, we obtain the semi-quasi homogeneous   vector field $X_{t, \tilde{t}, p, q, \triangle t, k}$ as follows}
\begin{equation}\label{78}
  X_{t, \tilde{t}, p, q, \triangle t, k}=(a_{p, n-t-p}x^{p}y^{n-t-p}, Q_{\tilde{t}, q, \triangle \tilde{t}, k}).
\end{equation}
\emph {Applying Remark \ref{re-5},  one can get the index $\lambda$. For semi-quasi homogenous vector field $X_{t, \tilde{t}, p, q, \triangle t, k}$ , the weight vector $w$ and the minimal weight vector $w_{m}$ are given in statement $c)$ of Proposition \ref{pr-8}. }
\emph{Note that we will next  go to  } step 7 \emph{directly}. \\
\emph{We observe that if index $\lambda=0$, then the system is quasi-homogenous and should be excluded. }
\item\emph{We remove from $I_{\tilde{t}, q}$ the ordered pair $(\triangle\tilde{t}, k)$ and go back to} step 1.
\item\emph{We remove from $I_{\tilde{t}, q}$ the set $\{(\triangle\tilde{t}, k)\}\bigcup\{(\triangle \tilde{t}^{*}, k_{\triangle \tilde{t}^{*}}):k_{\triangle \tilde{t}^{*}}\triangle\tilde{ t}=k\triangle \tilde{t}^{*}\}$. }
\item\emph{We return to} step 1. \emph{We repeat } step 1 \emph{to} step 7 \emph{until $I_{\tilde{t}, q}=\varnothing $. }
\end{enumerate}
\end{algorithm}

If we take the value of $\tilde{t}$ form $0$ to $n-2$ in turn, and then successively take the value of $q=0, . . . , n-\tilde{t}-2$ for each $\tilde{t}$, we will obtain   all the   {\itshape PSQHPDS} of degree $n$ with $|A|=1$ and $|B|\geq2$.

 In a word, based on the above Algorithm which include Part \ref{al-4}, Part  \ref{al-1}, Part  \ref{al-2},  everyone can
  obtain directly all the {\itshape PSQHPDS}  with a given degree.

  In the end of this section,   we would like to point out that the concept "the index of a given {\itshape PSQHPDS}" is well-defined.

\begin{remark}\label{re-12}
For any given {\itshape PSQHPDS} (don't forget that we have exclude the semihomogeneous systems),  the index is unique, although
the weight vector is not unique. This conclusion  can be confirmed  by remark \ref{re-10}, \ref{re-7}, \ref{re-11}, where we show that in any case,
 $\lambda$ is independent of $s_1,s_2,d_1,d_2$.   \label{re-12}
\end{remark}
\section{Application of the algorithm}\label{se-5}

 The main purpose of this section is to illustrate the application of  the algorithm described in the   Section \ref{se-4}.
 To this end, we will calculate
    all the {\itshape PSQHPDS} of degree $2$ and $3$ by employing our algorithm.

\subsection{The case $n=2$}

 By   Algorithm Part \ref{al-4}, we   find that this part can  only provides one {\itshape PSQHPDS} of degree $2$, i.e., $X_{0,0,0}$:
\begin{equation}\label{81}
  \begin{split}
  &\dot{x}=a_{0, 2}y^{2}+a_{1, 0}x, \\
  &\dot{y}=b_{0, 2}y^{2}+b_{1, 0}x, \\
  \end{split}
\end{equation}
with $w=(2s_{2}, s_{2}, 1, s_{2}+1)$ and thus $w_{m}=(2, 1, 1, 2)$.

 In Algorithm Part \ref{al-1}, we have $t=0$, $p=0$, $\triangle t=1$, $k=1$. This means that the equations set is very simple: $\varepsilon_{0}^{0}=\{e_{0}^{0, 0}[0]\}$. Hence, Algorithm Part \ref{al-1} is ended.

 In order to apply Algorithm Part \ref{al-2}, we  introduce the following matrix:
\begin{equation*}
{\displaystyle\tilde{R}(n, \tilde{t}, \triangle\tilde{t}, k)}=\begin{pmatrix}
  (t, p)&I_{t, p, \triangle \tilde{t}, k}\\
  (0, 0)&I_{0, 0, \triangle \tilde{t}, k}\\
  (0, 1)&I_{0, 1, \triangle \tilde{t}, k}\\
  . . . &. . . \\
   (0, n)&I_{0, n, \triangle \tilde{t}, k}\\
   (t_{i}, p_{i})& I_{t_{i}, p_{i}, \triangle \tilde{t}, k}
  \end{pmatrix},
\end{equation*}
where, $(t_{i}, p_{i})\in I$ (definition of $I$ see \eqref{89}), $I_{t_{i}, p_{i}, \triangle \tilde{t}, k}=\{(t^{*}, p^{*}):(p_{i}-p^{*})\triangle \tilde{t}=(t_{i}-t^{*})k, t^{*}\in\{0, . . . , n-1\}\backslash\{t\}\;and\;p^{*}\in\{0, . . . , n-t^{*}\}\}$.

 For Algorithm Part \ref{al-2}, since we can  only choose $\tilde{t}=0$, $q=0$ and $\triangle \tilde{t}=1$,  then the set $I_{\tilde{t}, q}=I_{0, 0}=I_{0, 0}(1)=\{(1, 1)\}$ and $I=I(0)\bigcup I(1)=\{(0, 0), (0, 1), (0, 2), (1, 0), (1, 1)\}$. In \emph{step 1}, we choose $(\triangle \tilde{t}, k)=(1, 1)\in I_{0, 0}$. We can write the matrix $\tilde{R}(n, \tilde{t}, \triangle\tilde{t}, k)$ as
\begin{equation}\label{129}
\tilde{R}(2, 0, 1, 1)=
  \begin{psmallmatrix}
  (t, p)&I_{t, p, 1, 1}\\
  (0, 0)&I_{0, 0, 1, 1}=\{(t^{*}, p^{*}):&p^{*}=t^{*}, t^{*}\in\{0, 1\}\backslash\{0\}\;and\;p^{*}\in\{0, . . . , 2-t^{*}\}\}=\{(1, 1)\}\\
  (0, 1)&I_{0, 1, 1, 1}=\{(t^{*}, p^{*}):&p^{*}-1=t^{*}, t^{*}\in\{0, 1\}\backslash\{0\}\;and\;p^{*}\in\{0, . . . , 2-t^{*}\}\}=\varnothing\\
  (0, 2)&I_{0, 2, 1, 1}=\{(t^{*}, p^{*}):&p^{*}-2=t^{*}, t^{*}\in\{0, 1\}\backslash\{0\}\;and\;p^{*}\in\{0, . . . , 2-t^{*}\}=\varnothing\\
  (1, 0)&I_{1, 0, 1, 1}=\{(t^{*}, p^{*}):&p^{*}=t^{*}-1, t^{*}\in\{0, 1\}\backslash\{1\}\;and\;p^{*}\in\{0, . . . , 2-t^{*}\}\}=\varnothing\\
  (1, 1)&I_{1, 1, 1, 1}=\{(t^{*}, p^{*}):&p^{*}=t^{*}, t^{*}\in\{0, 1\}\backslash\{1\}\;and\;p^{*}\in\{0, . . . , 2-t^{*}\}\}=\{(0, 0)\}
  \end{psmallmatrix}.
\end{equation}
In \emph{step 2}, by \eqref{129}, we have $S_{1, 1}=\{(0, 1), (0, 2)\}$.
By \emph{step 3 } and \emph{4} of Algorithm Part \ref{al-2}, we can get the equations set
   \begin{equation}\label{87}
    \tau_{0}^{0, 1, 1}=\{\pi_{0}^{0, 0}[0], \pi_{0}^{0, 1}[1]\}.
   \end{equation}
 In \emph{step 5}, the equations set $\tau_{0}^{0, 1, 1}$ determines the  semi-quasi homogeneous polynomial
 \begin{equation}\label{88}
   (0, Q_{0, 0, 1, 1})=B_{2}+B_{1}^{1, 1}=(0, b_{0, 2}y^{2}+b_{1, 0}x),
 \end{equation}
 that is,   $Q_{0, 0, 1, 1}=b_{0, 2}y^{2}+b_{1, 0}x$.

 Since each ordered pair $(t, p)\in S_{\triangle \tilde{t}, k}$ determines a vector field $A_{n-t}$, we have
 \begin{align*}
&(t, p)=(0, 1):\mspace{125mu}(a_{1, 1}xy, 0);\\
&(t, p)=(0, 2):\mspace{125mu}(a_{2, 0}x^{2}, 0).
 \end{align*}
Therefore,  its corresponding semi-quasi homogeneous   vector field $X_{t, 0, p, 0, 1, 1}$ is\\
\begin{equation}\label{128}
\begin{split}
&X_{0, 0, 1, 0, 1, 1}:\dot{x}=a_{1, 1}xy, \dot{y}=b_{0, 2}y^{2}+b_{1, 0}x, \\
&\lambda=0, w=({2\, d_{{2}}-2, d_{{2}}-1, d_{{2}}, d_{{2}}})\;with\;w_{{m}}=({2, 1, 2, 2}) \;(remove\; this\;system)\;;\\
&X_{0, 0, 2, 0, 1, 1}:\dot{x}=a_{2, 0}x^{2}, \dot{y}=b_{0, 2}y^{2}+b_{1, 0}x, \\
&\lambda=1, w=({2\, d_{{2}}-2, d_{{2}}-1, 2\, d_{{2}}-1, d_{{2}}})\;with\;w_{{m}}=({2, 1, 3, 2});\\
\end{split}
\end{equation}
respectively, where the weight vector $w$ and the minimal weight vector $w_{m}$ are given in statement $c)$ of Proposition \ref{pr-8}, and index $\lambda$ is given in Remark \ref{re-5}. In \emph{step 7}, $I_{\tilde{t}, q}=\{(1, 1)\}\backslash\{(1, 1)\}=\varnothing$,
the Algorithm Part \ref{al-2} has ended.

   Consequently,   we have the  following result.
\begin{proposition}\label{pr-15}
  A   {\itshape PSQHPDS} of degree $2$ (with $s_{1}>s_{2}$) is one of the following systems:
  {\allowdisplaybreaks\begin{align*}
&X_{0,0,0}:\dot{x}=a_{0, 2}y^{2}+a_{1, 0}x, \dot{y}=b_{0, 2}y^{2}+b_{1, 0}x, \\
&\lambda=-1, w=(2s_{2}, s_{2}, 1, s_{2}+1) \;with \;w_{m}=(2, 1, 1, 2);\\
&X_{0, 0, 2, 0, 1, 1}:\dot{x}=a_{2, 0}x^{2}, \dot{y}=b_{0, 2}y^{2}+b_{1, 0}x, \\
&\lambda=1, w=({2\, d_{{2}}-2, d_{{2}}-1, 2\, d_{{2}}-1, d_{{2}}})\;with\;w_{{m}}=({2, 1, 3, 2}).
  \end{align*}}
\end{proposition}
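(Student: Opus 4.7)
The plan is to apply, in order, the three parts of the algorithm from Section \ref{se-4} to the special case $n=2$, discarding in each part the quasi-homogeneous candidates (those with index $\lambda=0$), and then collect the survivors.

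First I would carry out Algorithm Part \ref{al-4} (Subsection \ref{sub-4.1}). For $n=2$ the family indexed by $t\in\{1,\ldots,n-2\}$ is empty, so only the subcase $t=0$ treated by \eqref{80} contributes, and it delivers the system with minimal weight vector $(2,1,1,2)$. Applying formula (i) of Remark \ref{re-3} with $t=\tilde{t}=q=0$, the index is $\lambda=-q-(n-\tilde{t}-1)/(n-t-1)=-1\neq 0$, so this is precisely the first system $X_{0,0,0}$ of the statement.

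Next I would show that Algorithm Part \ref{al-1} (Subsection \ref{sub-4.2}) yields nothing for $n=2$. The index ranges $t\in\{0,\ldots,n-2\}$ and $p\in\{0,\ldots,n-t-2\}$ force $t=p=0$, hence $\triangle t=1$ and $k=1$. Therefore $E_0^0=\{e_0^{0,0}[0],\,e_0^{0,1}[1]\}$; by Remark \ref{re-6} one removes $\{e_0^{0,1}[1]\}$, so $\varepsilon_0^0=\{e_0^{0,0}[0]\}$ contains only the trivial equation and cannot provide the second monomial of $P$ required by $|A|\geq 2$. Hence Part \ref{al-1} contributes no system.

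Finally I would execute Algorithm Part \ref{al-2} for $n=2$. Only $\tilde{t}=q=0$ and $\triangle\tilde{t}=k=1$ are admissible, giving $I_{0,0}=\{(1,1)\}$ and $I=\{(0,0),(0,1),(0,2),(1,0),(1,1)\}$. Computing $\tilde{R}(2,0,1,1)$ as in \eqref{129} identifies which sets $I_{t,p,1,1}$ are empty and yields $S_{1,1}=\{(0,1),(0,2)\}$. The equation set $\tau_0^{0,1,1}=\{\pi_0^{0,0}[0],\pi_0^{0,1}[1]\}$ produces $Q_{0,0,1,1}=b_{0,2}y^{2}+b_{1,0}x$; pairing this with the monomials attached to $(t,p)=(0,1)$ and $(t,p)=(0,2)$ produces the two candidate systems $X_{0,0,1,0,1,1}$ and $X_{0,0,2,0,1,1}$. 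By Remark \ref{re-5} the former has $\lambda=1-0-1-0=0$, so it is quasi-homogeneous and is excluded, while the latter has $\lambda=2-0-1-0=1$; its minimal weight vector $(2,1,3,2)$ then follows from statement $c)$ of Proposition \ref{pr-8}.

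The argument is essentially bookkeeping and the main subtleties are (a) to verify that the set $\varepsilon_0^0$ of Part \ref{al-1} is genuinely trimmed to a single trivial equation so that every candidate with $d_1>1$ is ruled out for $n=2$, and (b) to discard the $\lambda=0$ candidate $(t,p)=(0,1)$ in Part \ref{al-2}, ensuring that exactly the two systems announced in the proposition (and no duplicates across the three parts) remain.
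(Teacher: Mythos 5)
Your proposal is correct and follows essentially the same route as the paper: Part \ref{al-4} yields only $X_{0,0,0}$ via \eqref{80}, Part \ref{al-1} is vacuous because $\varepsilon_0^0$ reduces to the single equation $e_0^{0,0}[0]$ after Remark \ref{re-6} removes $e_0^{0,1}[1]$, and Part \ref{al-2} produces the two candidates attached to $S_{1,1}=\{(0,1),(0,2)\}$, of which the $\lambda=0$ one is discarded. The index and weight-vector computations you cite agree with the paper's.
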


\subsection{The case $n=3$}
Firstly, from \eqref{80}, we obtain a   {\itshape PSQHPDS} as follows
\begin{equation}\label{91}
  \begin{split}
  &\dot{x}=a_{0, 3}y^{3}+a_{1, 0}x, \\
  &\dot{y}=b_{0, 3}y^{3}+b_{1, 0}x, \\
  \end{split}
\end{equation}
with $w=(3s_{2}, s_{2}, 1, 2s_{2}+1)$ and $w_{m}=(3, 1, 1, 3)$.
\par For   Algorithm Part \ref{al-4}, we define the following matrix:
\begin{equation*}
  R(n, t)=\begin{pmatrix}
    q&I_{t, 0, q}\\
    0&I_{t, 0, 0}\\
    1&I_{t, 0, 1}\\
    . . . &. . . \\
    n&I_{t, 0, n}
  \end{pmatrix},
\end{equation*}
where $I_{t, 0, q}=\{(\tilde{t}^{*}, q^{*}):\tilde{t}^{*}=(q^{*}-q)(n-t-1), \tilde{t}^{*}\in\{1, . . . , n-1\}\;and\;q^{*}\in\{0, . . . , n-\tilde{t}^{*}\}\}$.

 Since $n=3$, we can only choose $t=1$ in this part. We must consider the values of $q=0, 1, 2, 3$. Substituting $n=3$ and $t=1$ into the matrix $R(n, t)$ gives:
\begin{equation*}
   R(3,1)=
   \begin{psmallmatrix}
     q&I_{1, 0, q}&\\
      0& I_{1, 0, 0}=\{(\tilde{t}^{*}, q^{*}):&\tilde{t}^{*}=q^{*}, \tilde{t}^{*}\in\{1, 2\}\;and\;q^{*}\in\{0, . . . , 3-\tilde{t}^{*}\}\}=\{(1, 1)\}\\
      1&I_{1, 0, 1}=\{(\tilde{t}^{*}, q^{*}):&\tilde{t}^{*}=q^{*}-1, \tilde{t}^{*}\in\{1, 2\}\;and\;q^{*}\in\{0, . . . , 3-\tilde{t}^{*}\}\}=\{(1, 2)\}\\
      2&I_{1, 0, 2}=\{(\tilde{t}^{*}, q^{*}):&\tilde{t}^{*}=q^{*}-2, \tilde{t}^{*}\in\{1, 2\}\;and\;q^{*}\in\{0, . . . , 3-\tilde{t}^{*}\}\}=\varnothing\\
      3&I_{1, 0, 3}=\{(\tilde{t}^{*}, q^{*}):&\tilde{t}^{*}=q^{*}-3, \tilde{t}^{*}\in\{1, 2\}\;and\;q^{*}\in\{0, . . . , 3-\tilde{t}^{*}\}\}=\varnothing
   \end{psmallmatrix}.
\end{equation*}
In \emph{step 2}, for $q=0$,  the set $I_{1, 0, 0}$ determines the semi-quasi homogeneous polynomial
\begin{equation*}
 (0, Q_{1, 0, 0})=B_{3}^{0, 0}+B_{2}^{1, 1}=(0, b_{0, 3}y^{3}+b_{1, 1}xy),
\end{equation*}
that is, $Q_{1, 0, 0}=b_{0, 3}y^{3}+b_{1, 1}xy$.
\par Therefore,  the  corresponding semi-quasi homogeneous   vector field $X_{1, 0, 0}$ is
\begin{equation}\label{96}
  X_{1, 0, 0}=(a_{0, 2}y^{2}+a_{1, 0}x, Q_{1, 0, 0})=(a_{0, 2}y^{2}+a_{1, 0}x, b_{0, 3}y^{3}+b_{1, 1}xy).
\end{equation}
Applying Remark \ref{re-3} and statement $b)$ of Proposition \ref{pr-4}, we obtain $\lambda=-2$, $w=(2\, s_{{2}}, s_{{2}}, 1, 1+2\, s_{{2}})$ and $w_{{m}}=({2, 1, 1, 3})$.
\par In \emph{step 2}, for $q=1$, we get $(0, Q_{1, 0, 1})=B_{3}^{0, 1}+B_{2}^{1, 2}=(0, b_{1, 2}xy^{2}+b_{2, 0}x^{2})$.
\par So,  the corresponding   vector field $X_{1, 0, 1}$ is
\begin{equation}\label{97}
  X_{1, 0, 1}=(a_{0, 2}y^{2}+a_{1, 0}x, Q_{1, 0, 1})=(a_{0, 2}y^{2}+a_{1, 0}x, b_{1, 2}xy^{2}+b_{2, 0}x^{2}).
\end{equation}
By Remark \ref{re-3} and statement $b)$ of Proposition \ref{pr-4}, we get $\lambda=-3$, $w=({2\, s_{{2}}, s_{{2}}, 1, 3\, s_{{2}}+1})$ and $w_{{m}}=({2, 1, 1, 4})$.
\par In \emph{step 2}, for $q=2$, we have $(0, Q_{1, 0, 2})=B_{3}^{0, 2}=(0, b_{2, 1}x^{2}y)$.
\par So,  the corresponding   vector field $X_{1, 0, 2}$ is
\begin{equation}\label{98}
 X_{1, 0, 2}=(a_{0, 2}y^{2}+a_{1, 0}x, Q_{1, 0, 2})=(a_{0, 2}y^{2}+a_{1, 0}x, b_{2, 1}x^{2}y).
\end{equation}
Using Remark \ref{re-3} and statement $b)$ of Proposition \ref{pr-4}, we have $\lambda=-4$, $w=({2\, s_{{2}}, s_{{2}}, 1, 4\, s_{{2}}+1})$ and $w_{{m}}=({2, 1, 1, 5})$.
\par In \emph{step 2}, for  $q=3$, we obtain $(0, Q_{1, 0, 3})=B_{3}^{0, 3}=(0, b_{3, 0}x^{3})$.
\par Thus, the corresponding   vector field $X_{1, 0, 3}$ is
\begin{equation}\label{99}
  X_{1, 0, 3}=(a_{0, 2}y^{2}+a_{1, 0}x, Q_{1, 0, 3})=(a_{0, 2}y^{2}+a_{1, 0}x, b_{3, 0}x^{3}).
\end{equation}
Applying Remark \ref{re-3} and statement $b)$ of Proposition \ref{pr-4}, we obtain $\lambda=-5$, $w=(2\, s_{{2}}, s_{{2}}, 1, 5\, s_{{2}}$\\$ +1)$ and $w_{{m}}=({2, 1, 1, 6})$. Algorithm Part \ref{al-4} is ended.

 In order to apply Algorithm Part \ref{al-1} expediently, for each $t=0, . . . , n-2$ and each $p=0, . . . , n-t-2$, we construct a matrix $M(n, t, p)$ as following
\begin{equation*}
  M(n, t, p)=\begin{pmatrix}
    \triangle t&k&\varepsilon_{p}^{t}&(P_{t, p, \triangle t, k}, 0)\\
    -&-&e_{p}^{t, 0}[0]&A_{n-t}\\
    1&1&e_{p}^{t, 1}[1]&A_{n-t-1}^{1, 1}\\
    . . . &. . . &. . . &. . . \\
    1&n-t-1-p&e_{p}^{t, 1}[n-t-1-p]&A_{n-t-1}^{1, n-t-1-p}\\
2&1&e_{p}^{t, 2}[1]&A_{n-t-2}^{2, 1}\\
. . . &. . . &. . . &. . . \\
2&n-t-2-p&e_{p}^{t, 2}[n-t-2-p]&A_{n-t-2}^{2, n-t-1-p}\\
 \triangle t_{i}&k_{i}&e_{p}^{t, \triangle t_{i}}[k_{i}]& A_{n-\triangle t_{i}-t}^{\triangle t_{i}, k_{i}}
  \end{pmatrix},
\end{equation*}
where, $\triangle t_{i}\in\{\triangle t:\triangle t=1, . . . , n-t-p-1\}$, $k_{i}\in\{k:k=1, . . . , n-t-\triangle t-p\}$, $\varepsilon_{p}^{t}=\{e_{p}^{t, 0}[0], e_{p}^{t, \triangle t_{i}}[k_{i}]\}$, $(P_{t, p, \triangle t, k}, 0)=\{A_{n-t}, A_{n-\triangle t_{i}-t}^{\triangle t_{i}, k_{i}}\}$. We remove from $\varepsilon_{0}^{t}$ the equation $e_{0}^{t, n-t-1}[1]$.
\par Now, we employ   Algorithm Part \ref{al-1} for the case $n=3$. In this case it suffices to  consider the values of $t=0, 1$.
\\\textbf{(i)} If chooses $t=0$, we must take the values of $p=0, 1$. \\
\;\textbf{(i.1)} For $p=0$, we can write the matrix $M(n, t, p)$ as
\begin{equation*}
 M(3, 0, 0)=\begin{pmatrix}
    \triangle t&k&\varepsilon_{0}^{0}&&(P_{0, 0, \triangle t, k}, 0)\\
    -&-&e_{0}^{0, 0}[0]:&-s_{1}+3s_{2}=d_{1}-1&A_{3}=(a_{0, 3}y^{3}, 0)\\
    1&1&e_{0}^{0, 1}[1]:&s_{2}=d_{1}-1&A_{2}^{1, 1}=(a_{1, 1}xy, 0)\\
1&2&e_{0}^{0, 1}[2]:&s_{1}=d_{1}-1&A_{2}^{1, 2}=(a_{2, 0}x^2, 0)
  \end{pmatrix}.
\end{equation*}
In \emph{step 1}, we take   $e_{0}^{0, 0}[0]$  as the first equation. For \emph{step 2}, we choose   $e_{0}^{0, 1}[1]$ as the second equation from the matrix $ M(3, 0, 0)$, i.e.,  $ \triangle t=1$ and $k=1$. In \emph{step 3} we cannot choose the equation $e_{0}^{0, 1}[2]$ because it does not satisfy   equation \eqref{51}. By \emph{step 4} we have $\varepsilon_{0}^{0, 1, 1}=\{e_{0}^{0, 0}[0], e_{0}^{0, 1}[1]\}$. From   equation \eqref{57}, we obtain that
\begin{equation}\label{100}
  (P_{0, 0, 1, 1}, 0)=A_{3}+A_{2}^{1, 1}=(a_{0, 3}y^{3}+a_{1, 1}xy, 0).
\end{equation}
\par In \emph{step 5}, we define the following matrix:
\begin{equation*}
 R(n, t, \triangle t, k)= \begin{pmatrix}
  (\tilde{t}, q)&I_{\tilde{t}, q, \triangle t, k}&(0, Q_{\tilde{t}, q, \triangle t, k})\\
  (0, 0)&I_{0, 0, \triangle t, k}&(0, Q_{0, 0, \triangle t, k})\\
  (0, 1)&I_{0, 1, \triangle t, k}&(0, Q_{0, 1, \triangle t, k})\\
  . . . &. . . &. . . \\
  (0, n)&I_{0, n, \triangle t, k}&(0, Q_{0, n, \triangle t, k})\\
  (\tilde{t}_{i}, q_{i})&I_{\tilde{t}_{i}, q_{i}, \triangle t, k}&(0, Q_{\tilde{t}_{i}, q_{i}, \triangle t, k})
  \end{pmatrix},
\end{equation*}
where, $(\tilde{t}_{i}, q_{i})\in\tilde{I}$ (the definition of $\tilde{I}$ can be seen in  \eqref{101}), $I_{\tilde{t}_{i}, q_{i}, \triangle t, k}=\{(\tilde{t}^{*}, q^{*}):(q_{i}-q^{*})\triangle t=(\tilde{t}_{i}-\tilde{t}^{*}\;)k, \tilde{t}^{*}\in\{0, . . . , n-1\}\backslash\{\;\tilde{t}_{i}\;\}\;and\;q^{*}\in\{0, . . . , n-\tilde{t}^{*}\}\}$.
\par Since $t=0$,   the set $\tilde{I}=\tilde{I}(0)\bigcup\tilde{I}(1)\bigcup\tilde{I}(2)=\{(0, 0), (0, 1), (0, 2), (0, 3), (1, 0), (1, 1),\\(1, 2), (2, 0), (2, 1)\}$. Substituting  $t=0$, $\triangle t=1$ and $k=1$ into the matrix  $R(n, t, \triangle t, k)$ gives:
{\allowdisplaybreaks\begin{eqnarray}\label{102}
  R(3,0,1,1)=\begin{psmallmatrix}
    (\tilde{t}, q)&I_{\tilde{t}, q, 1, 1}&(0, Q_{\tilde{t}, q, 1, 1})\\
    (0, 0)&I_{0, 0, 1, 1}=\{(1, 1)\}&(0, Q_{0, 0, 1, 1})\\
    (0, 1)&I_{0, 1, 1, 1}=\{(1,  2)\}&(0, Q_{0, 1, 1, 1})\\
    (0, 2)&I_{0, 2, 1, 1}=\varnothing&(0, Q_{0, 2, 1, 1})\\
    (0, 3)&I_{0, 3, 1, 1}=\varnothing&(0, Q_{0, 3, 1, 1})\\
    (1, 0)&I_{1, 0, 1, 1}=\{(2,  1)\}&(0, Q_{1, 0, 1, 1})\\
    (1, 1)&I_{1, 1, 1, 1}=\{(0,  0)\}&(0, Q_{1, 1, 1, 1})\\
    (1, 2)&I_{1, 2, 1, 1}=\{(0,  1)\}&(0, Q_{1, 2, 1, 1})\\
    (2, 0)&I_{2, 0, 1, 1}=\varnothing&(0, Q_{2, 0, 1, 1})\\
    (2, 1)&I_{2, 1, 1, 1}=\{(1,  0)\}&(0, Q_{2, 1, 1, 1})
  \end{psmallmatrix}.
\end{eqnarray}}
Doing \emph{step 6}, we obtain the new matrix
\begin{eqnarray}\label{130}
  \begin{pmatrix}
    (\tilde{t}, q)&I_{\tilde{t}, q, 1, 1}&(0, Q_{\tilde{t}, q, 1, 1})\\
    (0, 0)&I_{0, 0, 1, 1}=\{(1, 1)\}&(0, Q_{0, 0, 1, 1})\\
    (0, 1)&I_{0, 1, 1, 1}=\{(1,  2)\}&(0, Q_{0, 1, 1, 1})\\
    (0, 2)&I_{0, 2, 1, 1}=\varnothing&(0, Q_{0, 2, 1, 1})\\
    (0, 3)&I_{0, 3, 1, 1}=\varnothing&(0, Q_{0, 3, 1, 1})\\
    (1, 0)&I_{1, 0, 1, 1}=\{(2,  1)\}&(0, Q_{1, 0, 1, 1})\\
    (2, 0)&I_{2, 0, 1, 1}=\varnothing&(0, Q_{2, 0, 1, 1})\\
  \end{pmatrix}.
\end{eqnarray}
In \emph{step 7}, by   equation \eqref{59}, we have
\begin{equation}\label{108}
   \begin{split}
     &Q_{{0, 0, 1, 1}}=b_{{0, 3}}{y}^{3}+b_{{1, 1}}xy;\\
     &Q_{{0, 1, 1, 1}}=b_{{1, 2}}x{y}^{2}+b_{{2, 0}}{x}^{2};\\
     &Q_{{0, 2, 1, 1}}=b_{{2, 1}}{x}^{2}y;\\
     &Q_{{0, 3, 1, 1}}=b_{{3, 0}}{x}^{3};\\
     &Q_{{1, 0, 1, 1}}=b_{{0, 2}}{y}^{2}+b_{{1, 0}}x;\\
     &Q_{{2, 0, 1, 1}}=b_{{0, 1}}y.
   \end{split}
\end{equation}
Therefore,  together with \eqref{100}, the  corresponding semi-quasi homogeneous   vector field \\$X_{0, \tilde{t}, 0, q, 1, 1}$ is
\begin{equation}\label{104}
\begin{split}
 &{\it X}_{{0, 0, 0, 0, 1, 1}}:\dot{x}=a_{0, 3}y^{3}+a_{1, 1}xy, \dot{y}=b_{{0, 3}}{y}^{3}+b_{{1, 1}}xy, \\
 &\lambda=-1, w=({2\, d_{{1}}-2, d_{{1}}-1, d_{{1}}, 2\, d_{{1}}-1})\;with\;w_{{m}}=({2, 1, 2, 3});\\
 &{\it X}_{{0, 0, 0, 1, 1, 1}}:\dot{x}=a_{0, 3}y^{3}+a_{1, 1}xy, \dot{y}=b_{{1, 2}}x{y}^{2}+b_{{2, 0}}{x}^{2}, \\
 &\lambda=-2, w=({2\, d_{{1}}-2, d_{{1}}-1, d_{{1}}, 3\, d_{{1}}-2})\;with\;w_{{m}}=({2, 1, 2, 4});\\
 &{\it X}_{{0, 0, 0, 2, 1, 1}}: \dot{x}=a_{0, 3}y^{3}+a_{1, 1}xy, \dot{y}=b_{{2, 1}}{x}^{2}y, \\
 &\lambda=-3, w=({2\, d_{{1}}-2, d_{{1}}-1, d_{{1}}, 4\, d_{{1}}-3})\;with\;w_{{m}}=({2, 1, 2, 5});\\
 &{\it X}_{{0, 0, 0, 3, 1, 1}}: \dot{x}=a_{0, 3}y^{3}+a_{1, 1}xy, \dot{y}=b_{{3, 0}}{x}^{3}, \\
 &\lambda=-4, w=({2\, d_{{1}}-2, d_{{1}}-1, d_{{1}}, 5\, d_{{1}}-4})\;with\;w_{{m}}=({2, 1, 2, 6});\\
 &{\it X}_{{0, 1, 0, 0, 1, 1}}:\dot{x}=a_{0, 3}y^{3}+a_{1, 1}xy, \dot{y}=b_{{0, 2}}{y}^{2}+b_{{1, 0}}x, \\
 &\lambda=0, w=({2\, d_{{1}}-2, d_{{1}}-1, d_{{1}}, d_{{1}}})\;with\;w_{{m}}=({2, 1, 2, 2})\;(remove\; this\; system)\;;\\
 &{\it X}_{{0, 2, 0, 0, 1, 1}}:\dot{x}=a_{0, 3}y^{3}+a_{1, 1}xy, \dot{y}=b_{{0, 1}}y, \\
 &\lambda=1, w=({2\, d_{{1}}-2, d_{{1}}-1, d_{{1}}, 1})\;with\;w_{{m}}=({2, 1, 2, 1});
\end{split}
\end{equation}
 respectively, where  the weight vector $w$ and  minimal weight vector $w_{m}$ are given in statement $c)$ of Proposition \ref{pr-4}, and
 the index $\lambda$ is given in Remark \ref{re-3}.
 \par In \emph{step 8} we remove from $\varepsilon_{0}^{0}$ the equations $\varepsilon_{0}^{0, 1, 1}\backslash\{e_{0}^{0, 0}[0]\}$. Hence the matrix $ M(3, 0, 0)$ is changed  into
 \begin{equation*}
 \begin{pmatrix}
    \triangle t&k&\varepsilon_{0}^{0}&&(P_{0, 0, \triangle t, k}, 0)\\
    -&-&e_{0}^{0, 0}[0]:&-s_{1}+3s_{2}=d_{1}-1&A_{3}=(a_{0, 3}y^{3}, 0)\\
    1&2&e_{0}^{0, 1}[2]:&s_{1}=d_{1}-1&A_{2}^{1, 2}=(a_{2, 0}x^2, 0)
  \end{pmatrix}.
 \end{equation*}
   We begin to choose   $e_{0}^{0, 0}[0]$ as the first equation in \emph{step 1}. In \emph{step 2} we can only take $\triangle t=1$ and $k=2$, that is, the equation $e_{0}^{0, 1}[2]$. No other equation can be chose in \emph{step 3}. In \emph{step 4} we have $\varepsilon_{0}^{0, 1, 2}=\{e_{0}^{0, 0}[0], e_{0}^{0, 1}[2]\}$. Using  equation \eqref{57}, we get that
   \begin{equation}\label{105}
     (P_{0, 0, 1, 2}, 0)=A_{3}+A_{2}^{1, 2}=(a_{0, 3}y^{3}+a_{2, 0}x^2, 0).
   \end{equation}
  \par In \emph{step 5},  we calculate the  matrix $ R(3, 0, 1, 2)$ as following
\begin{equation}\label{103}
  R(3, 0, 1, 2)=\begin{pmatrix}
    (\tilde{t}, q)&I_{\tilde{t}, q, 1, 2}&(0, Q_{\tilde{t}, q, 1, 2})\\
    (0, 0)&I_{0, 0, 1, 2}=\{(1,  2)\}&(0, Q_{0, 0, 1, 2})\\
    (0, 1)&I_{0, 1, 1, 2}=\varnothing&(0, Q_{0, 1, 1, 2})\\
    (0, 2)&I_{0, 2, 1, 2}=\varnothing&(0, Q_{0, 2, 1, 2})\\
    (0, 3)&I_{0, 3, 1, 2}=\varnothing&(0, Q_{0, 3, 1, 2})\\
    (1, 0)&I_{1, 0, 1, 2}=\varnothing&(0, Q_{1, 0, 1, 2})\\
    (1, 1)&I_{1, 1, 1, 2}=\varnothing&(0, Q_{1, 1, 1, 2})\\
    (1, 2)&I_{1, 2, 1, 2}=\{(0,  0)\}&(0, Q_{1, 2, 1, 2})\\
    (2, 0)&I_{2, 0, 1, 2}=\varnothing&(0, Q_{2, 0, 1, 2})\\
    (2, 1)&I_{2, 1, 1, 2}=\varnothing&(0, Q_{2, 1, 1, 2})
  \end{pmatrix}.
\end{equation}
Doing \emph{step 6}, the matrix $R(3, 0, 1, 2)$ changes into
\begin{equation}\label{131}
  R(3, 0, 1, 2)=\begin{pmatrix}
    (\tilde{t}, q)&I_{\tilde{t}, q, 1, 2}&(0, Q_{\tilde{t}, q, 1, 2})\\
    (0, 0)&I_{0, 0, 1, 2}=\{(1,  2)\}&(0, Q_{0, 0, 1, 2})\\
    (0, 1)&I_{0, 1, 1, 2}=\varnothing&(0, Q_{0, 1, 1, 2})\\
    (0, 2)&I_{0, 2, 1, 2}=\varnothing&(0, Q_{0, 2, 1, 2})\\
    (0, 3)&I_{0, 3, 1, 2}=\varnothing&(0, Q_{0, 3, 1, 2})\\
    (1, 0)&I_{1, 0, 1, 2}=\varnothing&(0, Q_{1, 0, 1, 2})\\
    (1, 1)&I_{1, 1, 1, 2}=\varnothing&(0, Q_{1, 1, 1, 2})\\
    (2, 0)&I_{2, 0, 1, 2}=\varnothing&(0, Q_{2, 0, 1, 2})\\
    (2, 1)&I_{2, 1, 1, 2}=\varnothing&(0, Q_{2, 1, 1, 2})
  \end{pmatrix}.
\end{equation}
From   equation \eqref{59}, we obtain that
\begin{align*}
   &Q_{{0, 0, 1, 2}}=b_{{0, 3}}{y}^{3}+b_{{2, 0}}{x}^{2};\\
    &Q_{{0, 1, 1, 2}}=b_{{1, 2}}x{y}^{2};\\
     &Q_{{0, 2, 1, 2}}=b_{{2, 1}}{x}^{2}y;\\
     &Q_{{0, 3, 1, 2}}=b_{{3, 0}}{x}^{3};\\
     &Q_{{1, 0, 1, 2}}=b_{{0, 2}}{y}^{2};\\
     &Q_{{1, 1, 1, 2}}=b_{{1, 1}}xy;\\
     &Q_{{2, 0, 1, 2}}=b_{{0, 1}}y;\\
     &Q_{{2, 1, 1, 2}}=b_{{1, 0}}x.
\end{align*}
So,  together with \eqref{105}, the  corresponding semi-quasi homogeneous vector field  $X_{0, \tilde{t}, 0, q, 1, 2}$ is
\begin{equation}\label{106}
  \begin{split}
&X_{{0, 0, 0, 0, 1, 2}}:\dot{x}=a_{0, 3}y^{3}+a_{2, 0}x^2;\dot{y}=b_{{0, 3}}{y}^{3}+b_{{2, 0}}{x}^{2}, \\
&\lambda=-1, w=(d_{{1}}-1, 2(d_1-1)/3, d_{{1}},(4d_1-1)/3)\;with\;w_{{m}}=({3, 2, 4, 5}), \\
&X_{{0, 0, 0, 1, 1, 2}}:\dot{x}=a_{0, 3}y^{3}+a_{2, 0}x^2;\dot{y}=b_{{1, 2}}x{y}^{2}, \\
&\lambda=-2, w=({d_{{1}}-1, 2(d_1-1)/3, d_{{1}}, (5d_1-2)/3}), \;with\;w_{{m}}=({3, 2, 4, 6}), \\
&X_{{0, 0, 0, 2, 1, 2}}:\dot{x}=a_{0, 3}y^{3}+a_{2, 0}x^2;\dot{y}=b_{{2, 1}}{x}^{2}y, \\
&\lambda=-3, w=({d_{{1}}-1, 2(d_1-1)/3, d_{{1}}, 2\, d_{{1}}-1})\;with\;w_{{m}}=({3, 2, 4, 7});\\
&X_{{0, 0, 0, 3, 1, 2}}:\dot{x}=a_{0, 3}y^{3}+a_{2, 0}x^2;\dot{y}=b_{{3, 0}}{x}^{3}, \\
&\lambda=-4, w=({d_{{1}}-1, 2(d_1-1)/3, d_{{1}}, (7d_1-4)/3})\;with\;w_{{m}}=({3, 2, 4, 8});\\
&X_{{0, 1, 0, 0, 1, 2}}:\dot{x}=a_{0, 3}y^{3}+a_{2, 0}x^2;\dot{y}=b_{{0, 2}}{y}^{2}, \\
&\lambda=1, w=({d_{{1}}-1, 2(d_1-1)/3, d_{{1}}, (2d_1+1)/3})\;with\;w_{{m}}=({3, 2, 4, 3});\\
&X_{{0, 1, 0, 1, 1, 2}}:\dot{x}=a_{0, 3}y^{3}+a_{2, 0}x^2;\dot{y}=b_{{1, 1}}xy, \\
&\lambda=0, w=({d_{{1}}-1, 2(d_1-1)/3\, d_{{1}}, d_{{1}}, d_{{1}}})\;with\;w_{{m}}=({3, 2, 4, 4})\;(remove \; this \; system)\;;\\
&X_{{0, 2, 0, 0, 1, 2}}:\dot{x}=a_{0, 3}y^{3}+a_{2, 0}x^2;\dot{y}=b_{{0, 1}}y, \\
&\lambda=3, w=({d_{{1}}-1, 2(d_1-1)/3, d_{{1}}, 1})\;with\;w_{{m}}=({3, 2, 4, 1});\\
&X_{{0, 2, 0, 1, 1, 2}}:\dot{x}=a_{0, 3}y^{3}+a_{2, 0}x^2;\dot{y}=b_{{1, 0}}x, \\
&\lambda=2, w=({d_{{1}}-1, 2(d_1-1)/3, d_{{1}}, (d_1+2)/3})\;with\;w_{{m}}=({3, 2, 4, 2});\\
  \end{split}
\end{equation}
respectively, where  weight vector $w$ and  minimal weight vector $w_{m}$ are given in statement $c)$ of Proposition \ref{pr-4}, and the index $\lambda$ is given in Remark \ref{re-3}.

 In the end we remove from $\varepsilon_{0}^{0}$ the equations $\varepsilon_{0}^{0, 1, 2}\backslash \{e_{0}^{0, 0}[0]\}$. This implies
that $\varepsilon_{0}^{0}=\{e_{0}^{0, 0}[0]\}$.  The  Algorithm Part \ref{al-1} has finished for $p=0$. \\
\textbf{(i.2)} $p=1$. Substituting  $n=3$, $t=0$ and $p=1$ into the matrix  $M(n, t, p)$ gives:
\begin{equation*}
  M(3, 0, 1)=\begin{pmatrix}
    \triangle t&k&\varepsilon_{1}^{0}&&(P_{0, 1, \triangle t, k}, 0)\\
    -&-&e_{1}^{0, 0}[0]:&2s_{2}=d_{1}-1&A_{3}=(a_{1, 2}xy^{2}, 0)\\
    1&1&e_{1}^{0, 1}[1]:&s_{1}=d_{1}-1&A_{2}^{1, 1}=(a_{2, 0}x^{2}, 0)
 \end{pmatrix}.
\end{equation*}
\par We start choosing the equation $e_{1}^{0, 0}[0]$ in \emph{step 1}. In \emph {step 2} we can only select $\triangle t=1$ and $k=1$, i.e.,  the equation $e_{1}^{0, 1}[1]$. For \emph{step 3}, no other equation can be chosen. By \emph{step 4} we have $\varepsilon_{1}^{0, 1, 1}=\{e_{1}^{0, 0}[0], e_{1}^{0, 1}[1]\}$. Applying    equation \eqref{57}, we have that
\begin{equation}\label{107}
  (P_{0, 1, 1, 1}, 0)=A_{3}+A_{2}^{1, 1}=(a_{1, 2}xy^{2}+a_{2, 0}x^{2}, 0).
\end{equation}
\par In \emph{step 5}, substituting  $t=0$, $\triangle t=1$ and $k=1$ into the matrix  $R(n, t, \triangle t, k)$, we can find that the matrix  $R(n, t, \triangle t, k)$ is the same as \eqref{102}. From   equation \eqref{59}, we also have \eqref{108}.
\par Therefore,  together with \eqref{107}, the corresponding semi-quasi homogeneous  vector field $X_{0, \tilde{t}, 1, q, 1, 1}$ is
\begin{equation}\label{109}
  \begin{split}
 &X_{{0, 0, 1, 0, 1, 1}}:\dot{x}=a_{1, 2}xy^{2}+a_{2, 0}x^{2}, \dot{y}=b_{{0, 3}}{y}^{3}+b_{{1, 1}}xy, \\
 &\lambda=0, w=({d_{{1}}-1, (d_1-1)/2, d_{{1}}, d_{{1}}})\;with\;w_{{m}}=({2, 1, 3, 3})\;(remove \; this \; system)\;;\\
 &X_{{0, 0, 1, 1, 1, 1}}:\dot{x}=a_{1, 2}xy^{2}+a_{2, 0}x^{2}, \dot{y}=b_{{1, 2}}x{y}^{2}+b_{{2, 0}}{x}^{2}, \\
 &\lambda=-1, w=({d_{{1}}-1, (d_1-1)/2, d_{{1}}, (3d_1-1)/2}), \;with\;w_{{m}}=({2, 1, 3, 4});\\
 &X_{{0, 0, 1, 2, 1, 1}}: \dot{x}=a_{1, 2}xy^{2}+a_{2, 0}x^{2}, \dot{y}=b_{{2, 1}}{x}^{2}y, \\
 &\lambda=-2, w=({d_{{1}}-1, (d_1-1)/2, d_{{1}}, 2\, d_{{1}}-1})\;with\;w_{{m}}=({2, 1, 3, 5});\\
 &X_{{0, 0, 1, 3, 1, 1}}: \dot{x}=a_{1, 2}xy^{2}+a_{2, 0}x^{2}, \dot{y}=b_{{3, 0}}{x}^{3}, \\
 &\lambda=-3, w=({d_{{1}}-1, (d_1-1)/2, d_{{1}}, (5d_1-3)/2})\;with\;w_{{m}}=({2, 1, 3, 6});\\
 &X_{{0, 1, 1, 0, 1, 1}}:\dot{x}=a_{1, 2}xy^{2}+a_{2, 0}x^{2}, \dot{y}=b_{{0, 2}}{y}^{2}+b_{{1, 0}}x, \\
 &\lambda=1, w=({d_{{1}}-1, (d_1-1)/2, d_{{1}}, (d_1+1)/2})\;with\;w_{{m}}=({2, 1, 3, 2});\\
 &X_{{0, 2, 1, 0, 1, 1}}:\dot{x}=a_{1, 2}xy^{2}+a_{2, 0}x^{2}, \dot{y}=b_{{0, 1}}y, \\
 &\lambda=2, w=({d_{{1}}-1, (d_1-1)/2, d_{{1}}, 1})\;with\;w_{{m}}=({2, 1, 3, 1});
\end{split}
\end{equation}
respectively, where  the weight vector $w$ and  the minimal weight vector $w_{m}$ are given in statement $c)$ of Proposition \ref{pr-4}, and the index $\lambda$ is given in Remark \ref{re-3}. Finally, we remove from $\varepsilon_{1}^{0}$ the equations $\varepsilon_{1}^{0, 1, 1}\backslash\{e_{1}^{0, 0}[0]\}$. This means that $\varepsilon_{1}^{0}=\{e_{1}^{0, 0}[0]\}$. The Algorithm Part \ref{al-1} has ended for $p=1$.
\\\textbf{(ii)} If chooses $t=1$, we can only choose $p=0$ in this case. One can easily compute that $\varepsilon_{0}^{1}=\{e_{0}^{1, 0}[0]\}$. In \emph{step 9} the Algorithm Part \ref{al-1} has finished for $t = 1$.

 Now we apply   Algorithm Part \ref{al-2} to get all  {\itshape PSQHPDS}
 of degree $3$ with $|A|=1$ and $|B|\geq2$. In this case we must take the values of $\tilde{t} = 0, 1$. \\
\textbf{(i)} If chooses $\tilde{t}=0$, then  $q$ can be $0$ and $1$. \\
\;\textbf{(i.1)} $q=0$. We have
$$I_{\tilde{t}, q}=I_{0, 0}=\{(1,1), (1, 2), (2, 1)\}$$
and
$$I=\{(0,0), (0,1), (0,2), (0,3),(1,0),(1,1),(1,2), (2,0), ( 2,1)\}.$$
In \emph{step 1} we choose $(\triangle t, k)=(1, 1)\in I_{0, 0}$. We can write the matrix $\tilde{R}(n, \tilde{t}, \triangle\tilde{t}, k)$ as
\begin{equation}\label{110}
  \tilde{R}(3, 0, 1, 1)=\begin{pmatrix}
  (t, p)&I_{t, p, 1, 1}\\
  (0, 0)&I_{0, 0, 1, 1}=\{(1, 1)\}\\
    (0, 1)&I_{0, 1, 1, 1}=\{(1,  2)\}\\
    (0, 2)&I_{0, 2, 1, 1}=\varnothing\\
    (0, 3)&I_{0, 3, 1, 1}=\varnothing\\
    (1, 0)&I_{1, 0, 1, 1}=\{(2,  1)\}\\
    (1, 1)&I_{1, 1, 1, 1}=\{(0,  0)\}\\
    (1, 2)&I_{1, 2, 1, 1}=\{(0,  1)\}\\
    (2, 0)&I_{2, 0, 1, 1}=\varnothing\\
    (2, 1)&I_{2, 1, 1, 1}=\{(1,  0)\}
  \end{pmatrix}.
\end{equation}
In \emph{step 2} we have $S_{1, 1}=\{(0, 2), (0, 3)\}$. By \emph{step 3} and \emph{ 4} of  Algorithm Part \ref{al-2}, we can get the equations set
\begin{equation}\label{111}
  \tau_{0}^{0, 1, 1}=\{\pi_{0}^{0, 0}[0], \pi_{0}^{0, 1}[1]\}.
\end{equation}
In \emph{step 5} the equations set $\tau_{0}^{0, 1, 1}$ determines a  semi-quasi homogeneous polynomial
 \begin{equation}\label{112}
   (0, Q_{0, 0, 1, 1})=B_{3}+B_{2}^{1, 1}=(0, b_{0, 3}y^{3}+b_{1, 1}xy),
 \end{equation}
 that is,  $Q_{0, 0, 1, 1}=b_{0, 3}y^{3}+b_{1, 1}xy$.
 \par Since each ordered pair $(t, p)\in S_{\triangle \tilde{t}, k}$ determines a vector field $A_{n-t}$, we have\\
 \begin{equation}\label{123}
 \begin{split}
 &(t, p)=(0,  2):\mspace{125mu}(a_{{2, 1}}{x}^{2}y, 0);\\
&(t, p)=(0, 3):\mspace{125mu}(a_{{3, 0}}{x}^{3}, 0).
 \end{split}
 \end{equation}
Therefore,  the  corresponding semi-quasi homogeneous  vector field $X_{t, 0, p, 0, 1, 1}$ is
\begin{equation}\label{124}
  \begin{split}
  &X_{{0, 0, 2, 0, 1, 1}}:\dot{x}=a_{{2, 1}}{x}^{2}y, \dot{y}=b_{0, 3}y^{3}+b_{1, 1}xy, \\
  &\lambda=1, w=({d_{{2}}-1, 1/2\, d_{{2}}-1/2, 3/2\, d_{{2}}-1/2, d_{{2}}})\;with\;w_{{m}}=({2, 1, 4, 3});\\
  &X_{{0, 0, 3, 0, 1, 1}}:\dot{x}=a_{{3, 0}}{x}^{3}, \dot{y}=b_{0, 3}y^{3}+b_{1, 1}xy, \\
  &\lambda=2, w=({d_{{2}}-1, 1/2\, d_{{2}}-1/2, 2\, d_{{2}}-1, d_{{2}}})\;with\;w_{{m}}=({2, 1, 5, 3});
  \end{split}
\end{equation}
respectively, where the weight vector $w$ and the  minimal weight vector $w_{m}$ are given in statement $c)$ of Proposition \ref{pr-8},
 and the index $\lambda$ is given in Remark \ref{re-5}.

In \emph{step 7} we remove from $I_{0, 0}$ the set $\{(1, 1)\}$. Hence  the set $I_{0, 0}$ changes into $I_{0, 0}=\{(1,  2), (2,  1)\}$.

We go back to \emph{step 1} and select $(\triangle\tilde{t}, k)=(1,  2)\in I_{0, 0}$.

In \emph{step 2},  we calculate the  matrix $ \tilde{R}(3, 0, 1, 2)$ as following
\begin{equation}\label{113}
  \tilde{R}(3, 0, 1, 2)=\begin{psmallmatrix}
    (t, p)&I_{t, p, 1, 2}\\
    (0, 0)&I_{0, 0, 1, 2}=\{(1,  2)\}\\
    (0, 1)&I_{0, 1, 1, 2}=\varnothing\\
    (0, 2)&I_{0, 2, 1, 2}=\varnothing\\
    (0, 3)&I_{0, 3, 1, 2}=\varnothing\\
    (1, 0)&I_{1, 0, 1, 2}=\varnothing\\
    (1, 1)&I_{1, 1, 1, 2}=\varnothing\\
    (1, 2)&I_{1, 2, 1, 2}=\{(0,  0)\}\\
    (2, 0)&I_{2, 0, 1, 2}=\varnothing\\
    (2, 1)&I_{2, 1, 1, 2}=\varnothing
  \end{psmallmatrix}.
\end{equation}
So, $S_{1, 2}=\{ (0, 1), (0, 2), (0, 3), (1, 0), (1, 1), (2, 1)\}$. From \emph{step 3} and \emph{4} of Algorithm Part \ref{al-2}, we can obtain the equations set
\begin{equation}\label{114}
  \tau_{0}^{0, 1, 2}=\{\pi_{0}^{0, 0}[0], \pi_{0}^{0, 1}[2]\}.
\end{equation}
In \emph{step 5} the semi-quasi homogeneous  polynomial corresponding to equations set $\tau_{0}^{0, 1, 2}$ is
\begin{equation}\label{115}
  (0, Q_{0, 0, 1, 2})=B_{3}+B_{2}^{1, 2}=(0, b_{0, 3}y^{3}+b_{2, 0}x^{2}),
\end{equation}
i.e.,    $Q_{0, 0, 1, 2}=b_{0, 3}y^{3}+b_{2, 0}x^{2}$. Each ordered pair $(t, p)\in S_{1, 2}$ determines vector field $A_{n-t}$ as following
\begin{align*}
&(t, p)=(0,  1):\mspace{125mu}(a_{{1, 2}}x{y}^{2}, 0);\\
&(t, p)=(0,  2):\mspace{125mu}(a_{{2, 1}}{x}^{2}y, 0);\\
&(t, p)=(0,  3):\mspace{125mu}(a_{{3, 0}}{x}^{3}, 0);\\
&(t, p)=(1,  0):\mspace{125mu}(a_{{0, 2}}{y}^{2}, 0);\\
&(t, p)=(1,  1):\mspace{125mu}(a_{{1, 1}}xy, 0);\\
&(t, p)=(2,  1):\mspace{125mu}(a_{{1, 0}}x, 0).
\end{align*}
Consequently, the  corresponding semi-quasi homogeneous   vector field $X_{t, 0, p, 0, 1, 2}$ is
\begin{equation}\label{116}
  \begin{split}
   &X_{{0, 0, 1, 0, 1, 2}}:\dot{x}=a_{{1, 2}}x{y}^{2}, \dot{y}=b_{0, 3}y^{3}+b_{2, 0}x^{2}, \\
   &\lambda=0,  w=({3(d_2-1)/4, (d_2-1)/2, d_{{2}}, d_{{2}}})\;with\;w_{{m}}=({3, 2, 5, 5})\;(remove\; this\;system)\;;\\
   &X_{{0, 0, 2, 0, 1, 2}}:\dot{x}=a_{{2, 1}}{x}^{2}y, \dot{y}=b_{0, 3}y^{3}+b_{2, 0}x^{2}, \\
   &\lambda=1, w=({3(d_2-1)/4, (d_2-1)/2, (5d_2-1)/4, d_{{2}}})\;with\;w_{{m}}=({3, 2, 6, 5});\\
   &X_{{0, 0, 3, 0, 1, 2}}:\dot{x}=a_{{3, 0}}{x}^{3}, \dot{y}=b_{0, 3}y^{3}+b_{2, 0}x^{2}, \\
   &\lambda=2, w=({3(d_2-1)/4, (d_2-1)/2, (3d_2-1)/2, d_{{2}}})\;with\;w_{{m}}=({3, 2, 7, 5});\\
   &X_{{1, 0, 0, 0, 1, 2}}:\dot{x}=a_{{0, 2}}{y}^{2}, \dot{y}=b_{0, 3}y^{3}+b_{2, 0}x^{2}, \\
   &\lambda=-3, w=({3(d_2-1)/4, (d_2-1)/2, (d_2+3)/4, d_{{2}}})\;with\;w_{{m}}=({3, 2, 2, 5});\\
   &X_{{1, 0, 1, 0, 1, 2}}:\dot{x}=a_{{1, 1}}xy, \dot{y}=b_{0, 3}y^{3}+b_{2, 0}x^{2}, \\
   &\lambda=-2, w=({3(d_2-1)/4, (d_2-1)/2, (d_2+1)/2, d_{{2}}})\;with\;w_{{m}}=({3, 2, 3, 5});\\
   &X_{{2, 0, 1, 0, 1, 2}}:\dot{x}=a_{{1, 0}}x, \dot{y}=b_{0, 3}y^{3}+b_{2, 0}x^{2}, \\
   &\lambda=-4, w=({3(d_2-1)/4, (d_2-1)/2, 1, d_{{2}}})\;with\;w_{{m}}=({3, 2, 1, 5});
  \end{split}
\end{equation}
respectively, where the  weight vector $w$ and the  minimal weight vector $w_{m}$ are given in statement $c)$ of Proposition \ref{pr-8}, and
the index $\lambda$ is given in Remark \ref{re-5}.

Doing \emph{step 7}, we obtain the set $I_{0, 0}=\{(2, 1)\}$.

We once again go back to \emph{step 1} and choose $(\triangle \tilde{t}, k)=(2, 1)$.
Substituting $\tilde{t} = 0$, $\triangle\tilde{t} = 2 $ and $k = 1$ into the matrix $\tilde{R}(n, \tilde{t}, \triangle\tilde{t}, k)$ gives:
\begin{equation}\label{117}
  \tilde{R}(3, 0, 2, 1)=\begin{pmatrix}
    (t, p)&I_{t, p, 2, 1}\\
    (0, 0)&I_{0, 0, 2, 1}=\{(2,  1)\}\\
    (0, 1)&I_{0, 1, 2, 1}=\varnothing\\
    (0, 2)&I_{0, 2, 2, 1}=\varnothing\\
    (0, 3)&I_{0, 3, 2, 1}=\varnothing\\
    (1, 0)&I_{1, 0, 2, 1}=\varnothing\\
    (1, 1)&I_{1, 1, 2, 1}=\varnothing\\
    (1, 2)&I_{1, 2, 2, 1}=\varnothing\\
    (2, 0)&I_{2, 0, 2, 1}=\varnothing\\
    (2, 1)&I_{2, 1, 2, 1}=\{(0,  0)\}
  \end{pmatrix}.
\end{equation}
In \emph{step 2} we have $S_{2, 1}=\{(0, 1), (0, 2), (0, 3), (1, 1), (1, 2)\}$. By \emph{step 3} and \emph{4} of Algorithm Part \ref{al-2}, we can get the equations set
\begin{equation}\label{118}
  \tau_{0}^{0, 2, 1}=\{\pi_{0}^{0, 0}[0], \pi_{0}^{0, 2}[1]\}.
\end{equation}
In \emph{step 5} the equations set $\tau_{0}^{0, 2, 1}$ determines a semi-quasi homogeneous polynomial
\begin{equation}\label{119}
   (0, Q_{0, 0, 2, 1})=B_{3}+B_{1}^{2, 1}=(0, b_{0, 3}y^{3}+b_{1, 0}x),
 \end{equation}
 that is,  $Q_{0, 0, 2, 1}=b_{0, 3}y^{3}+b_{1, 0}x$.
\par Since each ordered pair $(t, p)\in S_{\triangle \tilde{t}, k}$ determines a vector field $A_{n-t}$, we have
\begin{align*}
&(t, p)=(0, 1):\mspace{125mu}(a_{{1, 2}}x{y}^{2}, 0);\\
&(t, p)=(0, 2):\mspace{125mu}(a_{{2, 1}}{x}^{2}y, 0);\\
&(t, p)=(0, 3):\mspace{125mu}(a_{{3, 0}}{x}^{3}, 0);\\
&(t, p)=(1, 1):\mspace{125mu}(a_{{1, 1}}xy, 0);\\
&(t, p)=(1, 2):\mspace{125mu}(a_{{2, 0}}{x}^{2}, 0).
\end{align*}
Therefore, the  corresponding semi-quasi homogeneous vector field  $X_{t, 0, p, 0, 2, 1}$ is
\begin{equation}\label{120}
\begin{split}
  &X_{{0, 0, 1, 0, 2, 1}}:\dot{x}=a_{{1, 2}}x{y}^{2}, \dot{y}=b_{0, 3}y^{3}+b_{1, 0}x, \\
  &\lambda=0, w=({3(d_2-1)/2, (d_2-1)/2, d_{{2}}, d_{{2}}})\;with\;w_{{m}}=({3, 1, 3, 3})\;(remove\; this\;system)\;;\\
  &X_{{0, 0, 2, 0, 2, 1}}:\dot{x}=a_{{2, 1}}{x}^{2}y, \dot{y}=b_{0, 3}y^{3}+b_{1, 0}x, \\
  &\lambda=1, w=({3(d_2-1)/2, (d_2-1)/2, 2\, d_{{2}}-1, d_{{2}}})\;with\;w_{{m}}=({3, 1, 5, 3});\\
  &X_{{0, 0, 3, 0, 2, 1}}:\dot{x}=a_{{3, 0}}{x}^{3}, \dot{y}=b_{0, 3}y^{3}+b_{1, 0}x, \\
  &\lambda=2, w=({3(d_2-1)/2, (d_2-1)/2, 3d_{{2}}-2, d_{{2}}})\;with\;w_{{m}}=({3, 1, 7, 3});\\
  &X_{{1, 0, 1, 0, 2, 1}}:\dot{x}=a_{{1, 1}}xy, \dot{y}=b_{0, 3}y^{3}+b_{1, 0}x, \\
  &\lambda=-1/2, w=({3(d_2-1)/2, (d_2-1)/2, (d_2+1)/2, d_{{2}}})\;with\;w_{{m}}=({3, 1, 2, 3});\\
  &X_{{1, 0, 2, 0, 2, 1}}:\dot{x}=a_{{2, 0}}{x}^{2}, \dot{y}=b_{0, 3}y^{3}+b_{1, 0}x, \\
  &\lambda=1/2, w=({3(d_2-1)/2, (d_2-1)/2, (3d_2-1)/2, d_{{2}}})\;with\;w_{{m}}=({3, 1, 4, 3});
\end{split}
\end{equation}
respectively, where the weight vector $w$ and the minimal weight vector $w_{m}$ are given in statement $c)$ of Proposition \ref{pr-8}, and the index $\lambda$ is given in Remark \ref{re-5}. In the end we remove from $I_{0, 0}$ the set $\{(2, 1)\}$. This implies that $I_{0, 0}=\varnothing$. The Algorithm Part \ref{al-2} has finished for $q=0$. \\
\textbf{(i.2)} $q=1$. In this case we have $I_{0, 1}=\{(1, 1)\}$ and
 $I=\{(0,  0), (0,  1), (0,  2), (0,  3), (1,  0),\\ (1,  1), (1,  2),(2,  0), ( 2,  1)\}$.

In \emph{step 2}, substituting  $\tilde{t}=0$, $\triangle \tilde{t}=1$ and $k=1$ into the matrix  $\tilde{R}(n, \tilde{t}, \triangle \tilde{t}, k)$, we can find that the matrix $\tilde{R}(n, \tilde{t}, \triangle \tilde{t}, k)$ is the same as \eqref{110}.  We also have $S_{1, 1}=\{(0, 2), (0, 3)\}$. From \emph{step 3} and  \emph{4} of  Algorithm Part \ref{al-2}, we can obtain the equations set
\begin{equation}\label{121}
  \tau_{1}^{0, 1, 1}=\{\pi_{1}^{0, 0}[0], \pi_{1}^{0, 1}[1]\}.
\end{equation}
In \emph{step 5} the semi-quasi homogeneous polynomial corresponding  to the equations set $\tau_{1}^{0, 1, 1}$ is
\begin{equation}\label{122}
  (0, Q_{0, 1, 1, 1})=B_{3}+B_{2}^{1, 1}=(0, b_{1, 2}xy^{2}+b_{2, 0}x^{2}),
\end{equation}
i.e.,    $Q_{0, 1, 1, 1}=b_{1, 2}xy^{2}+b_{2, 0}x^{2}$.
By \eqref{123},  the corresponding semi-quasi homogeneous  vector field $X_{t, 0, p, 1, 1, 1}$ is
\begin{equation}
  \begin{split}
  &X_{{0, 0, 2, 1, 1, 1}}:\dot{x}=a_{{2, 1}}{x}^{2}y, \dot{y}=b_{1, 2}xy^{2}+b_{2, 0}x^{2}, \\
  &\lambda=0, w=({2(d_2-1)/3, (d_2-1)/3, d_{{2}}, d_{{2}}})\;with\;w_{{m}}=({2, 1, 4, 4})\;(remove\; this\;system)\;;\\
  &X_{{0, 0, 3, 1, 1, 1}}:\dot{x}=a_{{3, 0}}{x}^{3}, \dot{y}=b_{1, 2}xy^{2}+b_{2, 0}x^{2}, \\
  &\lambda=1, w=({2(d_2-1)/3, (d_2-1)/3, (4d_2-1)/3, d_{{2}}})\;with\;w_{{m}}=({2, 1, 5, 4});\\
  \end{split}
\end{equation}
respectively, where the weight vector $w$ and the minimal weight vector $w_{m}$ are given in statement $c)$ of Proposition \ref{pr-8}, and the
index $\lambda$ is given in Remark \ref{re-5}. Finally, we remove from $I_{0, 1}$ the set $\{(1, 1)\}$ and we get $I_{0, 1}=\varnothing$. This process has ended for $q=1$.
\\\textbf{(ii)} If chooses $\tilde{t}=1$, we can only choose $q=0$ in this case. One can easily calculate that $I_{1, 0}=\{(1,  1)\}$ and $I_{0, 1}=\{(1, 1)\}$ and $I=\{(0,  0), (0,  1), (0,  2), (0,  3)\}$. In \emph{step 1} we can only select $(\triangle\tilde{t}, k)=(1, 1)\in I_{1, 0}$. Substituting $\tilde{t}=1$, $\triangle\tilde{t}=1$ and $k=1$ into the matrix $\tilde{R}(n, \tilde{t}, \triangle\tilde{t}, k)$ gives:
\begin{equation}\label{125}
  \tilde{R}(3, 1, 1, 1)=\begin{pmatrix}
  (t, p)&I_{t, p, 1, 1}\\
  (0, 0)&I_{0, 0, 1, 1}=\{(1, 1)\}\\
    (0, 1)&I_{0, 1, 1, 1}=\{(1,  2)\}\\
    (0, 2)&I_{0, 2, 1, 1}=\varnothing\\
    (0, 3)&I_{0, 3, 1, 1}=\varnothing
  \end{pmatrix}.
\end{equation}
Thus $S_{1, 1}=\{(0, 2), (0, 3)\}$. From \emph{step 3 and 4} of Algorithm Part  \ref{al-2}, we can obtain the equations set
\begin{equation}\label{114}
  \tau_{0}^{1, 1, 1}=\{\pi_{0}^{1, 0}[0], \pi_{0}^{1, 1}[1]\}.
\end{equation}
\par In \emph{step 5} the   semi-quasi homogeneous polynomial corresponding  to   equations  $\tau_{0}^{1, 1, 1}$  is
\begin{equation}\label{126}
  (0, Q_{1, 0, 1, 1})=B_{2}+B_{1}^{1, 1}=(0, b_{0, 2}y^{2}+b_{1, 0}x),
\end{equation}
i.e., $Q_{1, 0, 1, 1}=b_{0, 2}y^{2}+b_{1, 0}x$. Each ordered pair $(t, p)\in S_{1, 1}$ determines a vector field $A_{n-t}$ as following
\begin{align*}
&(t, p)=(0,  2):\mspace{125mu}(a_{{2, 1}}{x}^{2}y, 0);\\
&(t, p)=(0, 3):\mspace{125mu}(a_{{3, 0}}{x}^{3}, 0).
\end{align*}
Consequently, the  corresponding semi-quasi homogeneous  vector field $X_{t, 1, p, 0, 1, 1}$ is
\begin{equation}\label{127}
  \begin{split}
  &X_{{0, 1, 2, 0, 1, 1}}:\dot{x}=a_{{2, 1}}{x}^{2}y, \dot{y}=b_{0, 2}y^{2}+b_{1, 0}x, \\
  &\lambda=2, w=({2\, d_{{2}}-2, d_{{2}}-1, 3\, d_{{2}}-2, d_{{2}}})\;with\;w_{{m}}=({2, 1, 4, 2});\\
  &X_{{0, 1, 3, 0, 1, 1}}:\dot{x}=a_{{3, 0}}{x}^{3}, \dot{y}=b_{0, 2}y^{2}+b_{1, 0}x, \\
  &\lambda=3, w=({2\, d_{{2}}-2, d_{{2}}-1, 4\, d_{{2}}-3, d_{{2}}})\;with\;w_{{m}}=({2, 1, 5, 2});
\end{split}
\end{equation}
respectively. Finally, we remove from $I_{1, 0}$ the set $\{(1, 1)\}$ and get $I_{1, 0}=\varnothing$. The Algorithm Part \ref{al-2} has finished for $\tilde{t}=1$.

Combing the above result, we have the following conclusion.

\begin{proposition}\label{pr-16}
  A   {\itshape PSQHPDS} of degree $3$ (with $s_{1}>s_{2}$) is one of the following systems:
{\allowdisplaybreaks\begin{align*}
&X_{0,0,0}:\dot{x}=a_{0, 3}y^{3}+a_{1, 0}x, \dot{y}=b_{0, 3}y^{3}+b_{1, 0}x, \\
&\lambda=-1, w=(3s_{2}, s_{2}, 1, 2s_{2}+1)\;with\;w_{m}=(3, 1, 1, 3);\\
&X_{1, 0, 0}:\dot{x}=a_{0, 2}y^{2}+a_{1, 0}x, \dot{y}=b_{0, 3}y^{3}+b_{1, 1}xy, \\
&\lambda=-2, w=(2\, s_{{2}}, s_{{2}}, 1, 1+2\, s_{{2}})\;with\;w_{{m}}=({2, 1, 1, 3});\\
&X_{1, 0, 1}:\dot{x}=a_{0, 2}y^{2}+a_{1, 0}x, \dot{y}=b_{1, 2}xy^{2}+b_{2, 0}x^{2}, \\
&\lambda=-3, w=({2\, s_{{2}}, s_{{2}}, 1, 3\, s_{{2}}+1})\;with\;w_{{m}}=({2, 1, 1, 4});\\
&X_{1, 0, 2}:\dot{x}=a_{0, 2}y^{2}+a_{1, 0}x, \dot{y}=b_{2, 1}x^{2}y, \\
&\lambda=-4, w=({2\, s_{{2}}, s_{{2}}, 1, 4\, s_{{2}}+1})\;with\;w_{{m}}=({2, 1, 1, 5});\\
&X_{1, 0, 3}:\dot{x}=a_{0, 2}y^{2}+a_{1, 0}x, \dot{y}=b_{3, 0}x^{3}, \\
&\lambda=-5, w=({2\, s_{{2}}, s_{{2}}, 1, 5\, s_{{2}}+1})\;with\;w_{{m}}=({2, 1, 1, 6});\\
  &{\it X}_{{0, 0, 0, 0, 1, 1}}:\dot{x}=a_{0, 3}y^{3}+a_{1, 1}xy, \dot{y}=b_{{0, 3}}{y}^{3}+b_{{1, 1}}xy, \\
 &\lambda=-1, w=({2\, d_{{1}}-2, d_{{1}}-1, d_{{1}}, 2\, d_{{1}}-1})\;with\;w_{{m}}=({2, 1, 2, 3});\\
 &{\it X}_{{0, 0, 0, 1, 1, 1}}:\dot{x}=a_{0, 3}y^{3}+a_{1, 1}xy, \dot{y}=b_{{1, 2}}x{y}^{2}+b_{{2, 0}}{x}^{2}, \\
 &\lambda=-2, w=({2\, d_{{1}}-2, d_{{1}}-1, d_{{1}}, 3\, d_{{1}}-2})\;with\;w_{{m}}=({2, 1, 2, 4});\\
 &{\it X}_{{0, 0, 0, 2, 1, 1}}: \dot{x}=a_{0, 3}y^{3}+a_{1, 1}xy, \dot{y}=b_{{2, 1}}{x}^{2}y, \\
 &\lambda=-3, w=({2\, d_{{1}}-2, d_{{1}}-1, d_{{1}}, 4\, d_{{1}}-3})\;with\;w_{{m}}=({2, 1, 2, 5});\\
 &{\it X}_{{0, 0, 0, 3, 1, 1}}: \dot{x}=a_{0, 3}y^{3}+a_{1, 1}xy, \dot{y}=b_{{3, 0}}{x}^{3}, \\
 &\lambda=-4, w=({2\, d_{{1}}-2, d_{{1}}-1, d_{{1}}, 5\, d_{{1}}-4})\;with\;w_{{m}}=({2, 1, 2, 6});\\
 &{\it X}_{{0, 2, 0, 0, 1, 1}}:\dot{x}=a_{0, 3}y^{3}+a_{1, 1}xy, \dot{y}=b_{{0, 1}}y, \\
 &\lambda=1, w=({2\, d_{{1}}-2, d_{{1}}-1, d_{{1}}, 1})\;with\;w_{{m}}=({2, 1, 2, 1});\\
 &X_{{0, 0, 0, 0, 1, 2}}:\dot{x}=a_{0, 3}y^{3}+a_{2, 0}x^2;\dot{y}=b_{{0, 3}}{y}^{3}+b_{{2, 0}}{x}^{2}, \\
&\lambda=-1, w=({d_{{1}}-1, 2(d_1-1)/3, d_{{1}}, (4d_1-1)/3})\;with\;w_{{m}}=({3, 2, 4, 5}), \\
&X_{{0, 0, 0, 1, 1, 2}}:\dot{x}=a_{0, 3}y^{3}+a_{2, 0}x^2;\dot{y}=b_{{1, 2}}x{y}^{2}, \\
&\lambda=-2, w=({d_{{1}}-1, 2(d_1-1)/3, d_{{1}}, (5d_1-2)/3}), \;with\;w_{{m}}=({3, 2, 4, 6}), \\
&X_{{0, 0, 0, 2, 1, 2}}:\dot{x}=a_{0, 3}y^{3}+a_{2, 0}x^2;\dot{y}=b_{{2, 1}}{x}^{2}y, \\
&\lambda=-3, w=({d_{{1}}-1, 2(d_1-1)/3, d_{{1}}, 2\, d_{{1}}-1})\;with\;w_{{m}}=({3, 2, 4, 7});\\
&X_{{0, 0, 0, 3, 1, 2}}:\dot{x}=a_{0, 3}y^{3}+a_{2, 0}x^2;\dot{y}=b_{{3, 0}}{x}^{3}, \\
&\lambda=-4, w=({d_{{1}}-1, 2(d_1-1)/3, d_{{1}}, (7d_1-4)/3})\;with\;w_{{m}}=({3, 2, 4, 8});\\
&X_{{0, 1, 0, 0, 1, 2}}:\dot{x}=a_{0, 3}y^{3}+a_{2, 0}x^2;\dot{y}=b_{{0, 2}}{y}^{2}, \\
&\lambda=1, w=({d_{{1}}-1, 2(d_1-1)/3, d_{{1}}, (2d_1+1)/3})\;with\;w_{{m}}=({3, 2, 4, 3});\\
&X_{{0, 2, 0, 0, 1, 2}}:\dot{x}=a_{0, 3}y^{3}+a_{2, 0}x^2;\dot{y}=b_{{0, 1}}y, \\
&\lambda=3, w=({d_{{1}}-1, 2(d_1-1)/3, d_{{1}}, 1})\;with\;w_{{m}}=({3, 2, 4, 1});\\
&X_{{0, 2, 0, 1, 1, 2}}:\dot{x}=a_{0, 3}y^{3}+a_{2, 0}x^2;\dot{y}=b_{{1, 0}}x, \\
&\lambda=2, w=({d_{{1}}-1, 2(d_1-1)/3, d_{{1}}, (d_1+2)/3})\;with\;w_{{m}}=({3, 2, 4, 2});\\
 &X_{{0, 0, 1, 1, 1, 1}}:\dot{x}=a_{1, 2}xy^{2}+a_{2, 0}x^{2}, \dot{y}=b_{{1, 2}}x{y}^{2}+b_{{2, 0}}{x}^{2}, \\
 &\lambda=-1, w=({d_{{1}}-1, (d_1-1)/2, d_{{1}},(3d_1-1)/2}), \;with\;w_{{m}}=({2, 1, 3, 4});\\
 &X_{{0, 0, 1, 2, 1, 1}}: \dot{x}=a_{1, 2}xy^{2}+a_{2, 0}x^{2}, \dot{y}=b_{{2, 1}}{x}^{2}y, \\
 &\lambda=-2, w=({d_{{1}}-1, (d_1-1)/2, d_{{1}}, 2\, d_{{1}}-1})\;with\;w_{{m}}=({2, 1, 3, 5});\\
 &X_{{0, 0, 1, 3, 1, 1}}: \dot{x}=a_{1, 2}xy^{2}+a_{2, 0}x^{2}, \dot{y}=b_{{3, 0}}{x}^{3}, \\
 &\lambda=-3, w=({d_{{1}}-1, (d_1-1)/2, d_{{1}}, (5d_1-3)/2})\;with\;w_{{m}}=({2, 1, 3, 6});\\
  &X_{{0, 1, 1, 0, 1, 1}}:\dot{x}=a_{1, 2}xy^{2}+a_{2, 0}x^{2}, \dot{y}=b_{{0, 2}}{y}^{2}+b_{{1, 0}}x, \\
 &\lambda=1, w=({d_{{1}}-1, (d_1-1)/2, d_{{1}}, (d_1+1)/2})\;with\;w_{{m}}=({2, 1, 3, 2});\\
 &X_{{0, 2, 1, 0, 1, 1}}:\dot{x}=a_{1, 2}xy^{2}+a_{2, 0}x^{2}, \dot{y}=b_{{0, 1}}y, \\
 &\lambda=2, w=({d_{{1}}-1, (d_1-1)/2, d_{{1}}, 1})\;with\;w_{{m}}=({2, 1, 3, 1});\\
 &X_{{0, 0, 2, 0, 1, 1}}:\dot{x}=a_{{2, 1}}{x}^{2}y, \dot{y}=b_{0, 3}y^{3}+b_{1, 1}xy, \\
  &\lambda=1, w=({d_{{2}}-1, (d_2-1)/2, (3d_2-1)/2, d_{{2}}})\;with\;w_{{m}}=({2, 1, 4, 3});\\
  &X_{{0, 0, 3, 0, 1, 1}}:\dot{x}=a_{{3, 0}}{x}^{3}, \dot{y}=b_{0, 3}y^{3}+b_{1, 1}xy, \\
  &\lambda=2, w=({d_{{2}}-1, (d_2-1)/2, 2\, d_{{2}}-1, d_{{2}}})\;with\;w_{{m}}=({2, 1, 5, 3});\\
   &X_{{0, 0, 2, 0, 1, 2}}:\dot{x}=a_{{2, 1}}{x}^{2}y, \dot{y}=b_{0, 3}y^{3}+b_{2, 0}x^{2}, \\
   &\lambda=1, w=({3(d_2-1)/4, (d_2-1)/2, (5d_2-1)/4, d_{{2}}})\;with\;w_{{m}}=({3, 2, 6, 5});\\
   &X_{{0, 0, 3, 0, 1, 2}}:\dot{x}=a_{{3, 0}}{x}^{3}, \dot{y}=b_{0, 3}y^{3}+b_{2, 0}x^{2}, \\
   &\lambda=2, w=({3(d_2-1)/4, (d_2-1)/2, (3d_2-1)/2, d_{{2}}})\;with\;w_{{m}}=({3, 2, 7, 5});\\
   &X_{{1, 0, 0, 0, 1, 2}}:\dot{x}=a_{{0, 2}}{y}^{2}, \dot{y}=b_{0, 3}y^{3}+b_{2, 0}x^{2}, \\
   &\lambda=-3, w=({3(d_2-1)/4, (d_2-1)/2, (d_2+3)/4, d_{{2}}})\;with\;w_{{m}}=({3, 2, 2, 5});\\
   &X_{{1, 0, 1, 0, 1, 2}}:\dot{x}=a_{{1, 1}}xy, \dot{y}=b_{0, 3}y^{3}+b_{2, 0}x^{2}, \\
   &\lambda=-2, w=({3(d_2-1)/4, (d_2-1)/2, (d_2+1)/2, d_{{2}}})\;with\;w_{{m}}=({3, 2, 3, 5});\\
   &X_{{2, 0, 1, 0, 1, 2}}:\dot{x}=a_{{1, 0}}x, \dot{y}=b_{0, 3}y^{3}+b_{2, 0}x^{2}, \\
   &\lambda=-4, w=({3(d_2-1)/4, (d_2-1)/2, 1, d_{{2}}})\;with\;w_{{m}}=({3, 2, 1, 5});\\
  &X_{{0, 0, 2, 0, 2, 1}}:\dot{x}=a_{{2, 1}}{x}^{2}y, \dot{y}=b_{0, 3}y^{3}+b_{1, 0}x, \\
  &\lambda=1, w=({3(d_2-1)/2, (d_2-1)/2, 2\, d_{{2}}-1, d_{{2}}})\;with\;w_{{m}}=({3, 1, 5, 3});\\
  &X_{{0, 0, 3, 0, 2, 1}}:\dot{x}=a_{{3, 0}}{x}^{3}, \dot{y}=b_{0, 3}y^{3}+b_{1, 0}x, \\
  &\lambda=2, w=({3(d_2-1)/2, (d_2-1)/2, 3\, d_{{2}}-2, d_{{2}}})\;with\;w_{{m}}=({3, 1, 7, 3});\\
  &X_{{1, 0, 1, 0, 2, 1}}:\dot{x}=a_{{1, 1}}xy, \dot{y}=b_{0, 3}y^{3}+b_{1, 0}x, \\
  &\lambda=-1/2, w=({3(d_2-1)/2, (d_2-1)/2, (d_2+1)/2, d_{{2}}})\;with\;w_{{m}}=({3, 1, 2, 3});\\
  &X_{{1, 0, 2, 0, 2, 1}}:\dot{x}=a_{{2, 0}}{x}^{2}, \dot{y}=b_{0, 3}y^{3}+b_{1, 0}x, \\
  &\lambda=1/2, w=({3(d_2-1)/2, (d_2-1)/2, (3d_2-1)/2, d_{{2}}})\;with\;w_{{m}}=({3, 1, 4, 3});\\
  &X_{{0, 0, 3, 1, 1, 1}}:\dot{x}=a_{{3, 0}}{x}^{3}, \dot{y}=b_{1, 2}xy^{2}+b_{2, 0}x^{2}, \\
  &\lambda=1, w=({2(d_2-1)/3, (d_2-1)/3, (4d_2-1)/3, d_{{2}}})\;with\;w_{{m}}=({2, 1, 5, 4});\\
   &X_{{0, 1, 2, 0, 1, 1}}:\dot{x}=a_{{2, 1}}{x}^{2}y, \dot{y}=b_{0, 2}y^{2}+b_{1, 0}x, \\
  &\lambda=2, w=({2\, d_{{2}}-2, d_{{2}}-1, 3\, d_{{2}}-2, d_{{2}}})\;with\;w_{{m}}=({2, 1, 4, 2});\\
  &X_{{0, 1, 3, 0, 1, 1}}:\dot{x}=a_{{3, 0}}{x}^{3}, \dot{y}=b_{0, 2}y^{2}+b_{1, 0}x, \\
  &\lambda=3, w=({2\, d_{{2}}-2, d_{{2}}-1, 4\, d_{{2}}-3, d_{{2}}})\;with\;w_{{m}}=({2, 1, 5, 2}).
\end{align*}}
\end{proposition}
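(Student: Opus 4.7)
The plan is to prove Proposition \ref{pr-16} by a direct, systematic execution of the three-part algorithm developed in Section \ref{se-4}, specialized to $n=3$. Since every PSQHPDS of degree $n\ge 2$ falls into exactly one of the three mutually exclusive regimes treated in Subsections \ref{sub-4.1}, \ref{sub-4.2} and 4.3 (namely $|A|\ge 2$ with $d_1=1$; $|A|\ge 2$ with $d_1>1$; or $|A|=1$ with $|B|\ge 2$), it is enough to run each Algorithm Part in turn with $n=3$, discard every output vector field whose index $\lambda$ equals $0$ (by Remark \ref{re-9} these are precisely the quasi-homogeneous ones, already classified in \cite{1}), and collect the remaining systems.

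First, I would apply Algorithm Part \ref{al-4}. The case $t=0$ gives immediately the system $X_{0,0,0}$ from \eqref{80} (in our list the first family). The only other admissible value is $t=1$, for which I would form the matrix $R(3,1)$ and read off the sets $I_{1,0,0}=\{(1,1)\}$, $I_{1,0,1}=\{(1,2)\}$, $I_{1,0,2}=I_{1,0,3}=\varnothing$. These four values of $q$ produce exactly $X_{1,0,0}$, $X_{1,0,1}$, $X_{1,0,2}$, $X_{1,0,3}$; the associated weight data follow from statement $b)$ of Proposition \ref{pr-4} and Remark \ref{re-3}, after normalizing $s_2=1$.

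Next I would run Algorithm Part \ref{al-1}. Here only $t\in\{0,1\}$ and $p\in\{0,\dots,n-t-2\}$ are admissible, so the possibilities reduce to $(t,p)\in\{(0,0),(0,1),(1,0)\}$. For each pair I would list the set $\varepsilon_p^{t}$ (using Remark \ref{re-6} to delete $e_0^{t,n-t-1}[1]$), then choose each admissible pair $(\triangle t,k)$ to build the maximal compatible subsystem $\varepsilon_p^{t,\triangle t,k}$ via \eqref{56}, and finally use the matrix $R(n,t,\triangle t,k)$ to read off all compatible $(\tilde{t}^*,q^*)$ pairs. The case $(t,p)=(1,0)$ yields $\varepsilon_0^1=\{e_0^{1,0}[0]\}$, contributing nothing new, while $(0,0)$ with $(\triangle t,k)=(1,1)$ produces the six $X_{0,\tilde{t},0,q,1,1}$ families; $(0,0)$ with $(\triangle t,k)=(1,2)$ produces the eight $X_{0,\tilde{t},0,q,1,2}$ families; and $(0,1)$ with $(\triangle t,k)=(1,1)$ produces the six $X_{0,\tilde{t},1,q,1,1}$ families. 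Discarding the $\lambda=0$ cases gives exactly the systems listed.

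Finally I would execute Algorithm Part \ref{al-2}. Only $(\tilde t,q)\in\{(0,0),(0,1),(1,0)\}$ arise. For $(\tilde t,q)=(0,0)$ the set $I_{0,0}=\{(1,1),(1,2),(2,1)\}$ supplies three choices of $(\triangle\tilde t,k)$, each of which I would analyse through the matrix $\tilde R(3,0,\triangle\tilde t,k)$, compute $S_{\triangle\tilde t,k}$, and build $(P,Q)$ via \eqref{73}--\eqref{78}. The cases $(\tilde t,q)=(0,1)$ and $(1,0)$ each have only one $(\triangle\tilde t,k)$ to consider. Once again the $\lambda=0$ outputs are excluded, and the surviving vector fields are precisely those in the proposition, with their weight and minimal weight vectors supplied by statement $c)$ of Proposition \ref{pr-8} and the index by Remark \ref{re-5}.

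The substantive part is not any single deduction — every individual calculation is linear algebra and divisibility — but the combinatorial bookkeeping: one must (i) verify that the three algorithm parts really are exhaustive and pairwise disjoint on the class of non-semihomogeneous PSQHPDS of degree $3$, which is guaranteed by Corollary \ref{co-8} and the case split at the start of Section \ref{se-3}; (ii) avoid double counting when the same vector field could arise from different orderings of the free parameters; and (iii) correctly identify and remove the quasi-homogeneous systems ($\lambda=0$) that lie inside the output. The main obstacle in practice is thus the careful enumeration of the matrices $R$, $M$ and $\tilde R$ for $n=3$ — a finite but lengthy task that the worked example in Subsection 5.2 already carries out in full.
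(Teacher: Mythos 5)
Your proposal is correct and follows essentially the same route as the paper: the paper's proof of Proposition \ref{pr-16} is exactly the step-by-step execution of Algorithm Parts \ref{al-4}, \ref{al-1} and \ref{al-2} for $n=3$ carried out in Subsection 5.2, with the same parameter ranges $(t,p)\in\{(0,0),(0,1),(1,0)\}$, $(\tilde t,q)\in\{(0,0),(0,1),(1,0)\}$, the same matrices $R$, $M$, $\tilde R$, and the same exclusion of the $\lambda=0$ (quasi-homogeneous) outputs. Nothing essential is missing from your outline.
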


\section{Center of  PSQHPDS  of degree $2$ and $3$}\label{se-6}

 In this section we study the existence and the canonical form of center for    planar semi-quasi homogeneous but non-semihomogeneous polynomial vector fields of degree $2$ and $3$. The main result of this section is the following theorem.

\begin{theorem}\label{thm-21}
  Suppose that system \eqref{1} is a   semi-quasi homogeneous but non-semihomogeneous   polynomial differential system of degree $n$.

  $(i)$ If $n=2$, then system \eqref{1} has not center.

  $(ii)$ If $n=3$, then  system \eqref{1} has a center if and only if   after a linear transformation and a rescaling of time, \eqref{1}  can be written as
 \begin{equation}\label{eq-cubcen}\dot{x}=-y^3+x^2, \dot{y}=x. \end{equation} Moreover, system \eqref{eq-cubcen}   has the first integral
 $$H(x,y)=\left(y^3+\frac{3}{2}y^2+\frac{3}{2}y-x^2+\frac{3}{4}\right)e^{-2y},$$
 with integrating factor $M(y)=-2e^{-2y}$, and the  period annulus corresponding to  $H(x,y)=h\in(0,3/4)$. The global phase portrait  of system \eqref{eq-cubcen} is shown in Figure  \ref{fig.1}.

 \begin{figure}[H]
  \begin{center}
   \includegraphics[width=3.8cm,height=3.6cm]{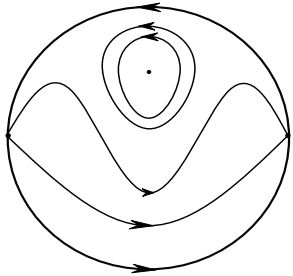}
  \end{center}
  \caption{The global phase portraits of system \eqref{eq-cubcen}}\label{fig.1}
\end{figure}

\end{theorem}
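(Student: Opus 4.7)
My plan is to exploit the explicit classifications of Propositions \ref{pr-15} and \ref{pr-16}: every {\itshape PSQHPDS} of degree $2$ or $3$ falls into one of the finitely many families listed there, so the center problem reduces to a finite case analysis that can be processed uniformly.

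For part $(i)$, Proposition \ref{pr-15} leaves only two families. In $X_{0,0,2,0,1,1}$ one has $\dot x=a_{2,0}x^{2}$, so $\dot x$ has constant sign on $\{x\neq 0\}$ and $x(t)$ is strictly monotone along any orbit not contained in the invariant line $\{x=0\}$, which excludes a closed orbit around the origin. In $X_{0,0,0}$ the Jacobian at the origin has eigenvalues $a_{1,0}$ and $0$, so a center forces $a_{1,0}=0$; but then $\dot x=a_{0,2}y^{2}$ again has constant sign on $\{y\neq 0\}$, so $\int\dot x\,dt\neq 0$ along any prospective periodic orbit surrounding the origin, a contradiction. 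Non-origin equilibria, when they exist, lie on a line of equilibria and are therefore not isolated, hence cannot be centers either.

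For part $(ii)$, I will march through the list of Proposition \ref{pr-16} and discard each family using one of three uniform obstructions: (a) one of $\dot x$ or $\dot y$ is a single monomial $c\,x^{i}y^{j}$ in which at least one of $i,j$ is even, so the corresponding coordinate is monotone outside an invariant axis and no closed orbit exists; (b) $\{x=0\}$ or $\{y=0\}$ is invariant and the flow on it escapes to infinity, so no closed orbit can cross it; or (c) the linearization at the origin has real nonzero eigenvalues (node or saddle). A careful case-by-case check shows that every family of Proposition \ref{pr-16} except
\[X_{0,2,0,1,1,2}\colon\quad \dot x=a_{0,3}y^{3}+a_{2,0}x^{2},\qquad \dot y=b_{1,0}x,\]
is eliminated by at least one of these obstructions. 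For this surviving family I will apply a linear rescaling $x\mapsto\alpha x$, $y\mapsto\beta y$ together with a time change to normalise the three non-zero coefficients; a sign analysis of the nilpotent linear part then singles out the canonical form $\dot x=-y^{3}+x^{2}$, $\dot y=x$. To verify that this canonical system really has a center, I will look for an inverse integrating factor $M=M(y)$: the divergence condition $(MP)_{x}+(MQ)_{y}=0$ reduces to $M'+2M=0$, whence $M(y)=-2e^{-2y}$ (after a convenient normalisation), and integration yields the first integral
\[H(x,y)=\bigl(y^{3}+\tfrac{3}{2}y^{2}+\tfrac{3}{2}y-x^{2}+\tfrac{3}{4}\bigr)e^{-2y}\]
announced in the theorem. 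Because the linear part at the origin is nilpotent, Andreev's theorem (applied after swapping $x$ and $y$ to put the system in the form $\dot u=v$, $\dot v=-u^{3}+v^{2}$) ensures the origin is either a center or a focus, and the existence of the analytic first integral $H$ forces it to be a center. A level-set analysis of $H$, using that its unique critical point is $(0,0)$ with value $3/4$ while $H(0,y)\to 0$ as $y\to+\infty$, identifies the period annulus as the connected component of $\{0<H<3/4\}$ containing $(0,0)$ and produces the global phase portrait of Figure \ref{fig.1}.

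The main difficulty of this plan is the bookkeeping in the elimination step of part $(ii)$: although each of the roughly thirty families of Proposition \ref{pr-16} is individually easy, one must verify uniformly that no equilibrium, not merely the origin, supports a center. A secondary delicate point is the sign fixing in the linear reduction of the surviving family to the canonical form, since not every choice of $a_{0,3}$, $a_{2,0}$, $b_{1,0}$ yields a center — the combination of signs must be precisely the one that makes the nilpotent germ monodromic rather than of cusp type.
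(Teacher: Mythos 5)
Your overall strategy (finite case analysis over the classification, reduction of the one surviving family to a canonical form, explicit first integral for the center) matches the paper's, and your part $(i)$ argument is a legitimate, more elementary alternative to the paper's appeal to the quadratic-center normal form. However, there is a genuine gap in the elimination step of part $(ii)$. Your three uniform obstructions do not dispose of the families whose singularity at the origin is degenerate (nilpotent or zero linear part) and which have neither an invariant coordinate axis nor a sign-definite component: concretely, $\dot{x}=y^{3}+x^{2},\ \dot{y}=x$ (the ``$+$'' companion of the center system), the systems $(M_{1,k}^{\pm})$ such as $\dot{x}=\pm y^{3}+xy,\ \dot{y}=\pm x^{3}$, the nilpotent subsystems of $(A_{2})$ such as $\dot{x}=\pm y^{3},\ \dot{y}=y^{3}+x$, and the subsystems of $(D_{2})$ and $(E_{2})$ with zero linear part. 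None of these is caught by (b) or (c), and your criterion (a) is incorrectly formulated: a single monomial $c\,x^{i}y^{j}$ with ``at least one of $i,j$ even'' need not have constant sign (e.g.\ $\dot{y}=x$ has $j=0$ even but changes sign), and taken literally your criterion (a) would also eliminate the genuine center $\dot{x}=-y^{3}+x^{2},\ \dot{y}=x$, whose second component is exactly such a monomial. The paper closes precisely this gap by performing quasi-homogeneous blow-ups at the origin and exhibiting elementary (hyperbolic or semi-hyperbolic) singularities on the exceptional divisor, which prevents monodromy, supplemented by Bendixson's criterion and a Lyapunov function for a few cases; some substitute for these arguments is indispensable.

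Relatedly, your treatment of the surviving family is incomplete at the ``delicate point'' you yourself flag: the family $X_{0,2,0,1,1,2}$ normalizes to the two canonical forms $\dot{x}=\pm y^{3}+x^{2},\ \dot{y}=x$, and only the ``$-$'' sign is monodromic. You invoke Andreev's theorem only to confirm that the ``$-$'' case is a center or focus, but you never actually rule out the ``$+$'' case; the same monodromy analysis (Andreev's theorem applied to $\dot{u}=v,\ \dot{v}=au^{3}+v^{2}$ distinguishes $a<0$ from $a>0$, or alternatively the paper's blow-up) must be carried out there, otherwise the ``only if'' direction of the equivalence in part $(ii)$ is not established. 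The verification of the first integral $H$, the integrating factor $M(y)=-2e^{-2y}$, and the identification of the period annulus $H\in(0,3/4)$ are correct and agree with the paper.
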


The proof of Theorem \ref{thm-21} rely on the canonical forms of  {\itshape PSQHPDS} \eqref{1}. Thus we need the following two propositions.

\begin{proposition}\label{pr-18}
If system \eqref{1} is a semi-quasi homogeneous but non-semihomogeneous quadratic coprime polynomial differential system  with the minimal weight vector $w_{m}$, then after a linear transformation and a rescaling of independent variables, it can be written as one of the following systems
\begin{itemize}
  \item[(\;$A_{1}$\;)] $\dot{x}=x^{2}$, $\dot{y}=y^{2}+x$, with $w_{m} = (2, 1, 3, 2)$.
  \item[(\;$A_{2}$\;)] $\dot{x}=y^{2}+x$, $\dot{y}=ay^{2}+x$, with $a(a-1)\neq0$, or $\dot{x}=a_{1}y^{2}+a_{2}x$, $\dot{y}=a_{3}y^{2}+a_{4}x$ with $a_{i}\in\{0,1\}$ and $a_{1}+a_{2}+a_{3}+a_{4}=3$, and both with $w_{m}=(2,1,1,2)$.
\end{itemize}
\end{proposition}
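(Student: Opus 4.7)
The plan is to build directly on Proposition~\ref{pr-15}, which classifies the degree-$2$ PSQHPDS (with $s_{1}>s_{2}$) into exactly two parameterized families: $X_{0,0,0}$, namely $\dot{x}=a_{0,2}y^{2}+a_{1,0}x$, $\dot{y}=b_{0,2}y^{2}+b_{1,0}x$ with $w_{m}=(2,1,1,2)$, and $X_{0,0,2,0,1,1}$, namely $\dot{x}=a_{2,0}x^{2}$, $\dot{y}=b_{0,2}y^{2}+b_{1,0}x$ with $w_{m}=(2,1,3,2)$. For each family I would apply the three-parameter diagonal group $x\mapsto\alpha x$, $y\mapsto\beta y$, $t\mapsto ct$ (with $\alpha\beta c\neq 0$) and use the coprimality and non-semihomogeneity hypotheses to rule out degenerate sub-cases.

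For $X_{0,0,2,0,1,1}$, I would first observe that $a_{2,0}=0$ yields $P\equiv 0$; that $b_{0,2}=0$ produces $\gcd(P,Q)=x$, violating coprimality; and that $b_{1,0}=0$ reduces the system to a classically homogeneous vector field $\dot x=a_{2,0}x^{2}$, $\dot y=b_{0,2}y^{2}$, which is semihomogeneous in the sense used throughout the paper (cf.~Corollary~\ref{co-8}). Once $a_{2,0},b_{0,2},b_{1,0}$ are all nonzero, the three equations $a_{2,0}\alpha c=1$, $b_{0,2}\beta c=1$, $b_{1,0}\alpha c/\beta=1$ have a unique real solution for $(\alpha,\beta,c)$, giving form~$(A_{1})$.

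For $X_{0,0,0}$ I would perform a case analysis on how many of the four coefficients vanish. When all four are nonzero, a direct computation shows that the ratio
\[a=\frac{b_{0,2}\,a_{1,0}}{a_{0,2}\,b_{1,0}}\]
is invariant under the rescaling group while the other three coefficients can simultaneously be set to $1$, delivering $\dot x=y^{2}+x$, $\dot y=ay^{2}+x$; since both $P$ and $Q$ are irreducible in $\mathbb{R}[x,y]$, coprimality is equivalent to $Q\not\propto P$, i.e.\ $a\neq 1$, while $b_{0,2}\neq 0$ gives $a\neq 0$, recovering the constraint $a(a-1)\neq 0$ of~$(A_{2})$. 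When exactly one of the four coefficients vanishes, there are four symmetric sub-cases, each with three nonzero coefficients that can be rescaled to $1$, producing precisely the four systems with $a_{i}\in\{0,1\}$ and $a_{1}+a_{2}+a_{3}+a_{4}=3$. All six sub-cases with two or more vanishing coefficients are excluded: either $P\equiv 0$ or $Q\equiv 0$, or $P$ and $Q$ share $x$ or $y^{2}$ as a common factor (violating coprimality), or the system is classically homogeneous (hence semihomogeneous).

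The main obstacle will be the careful case analysis for $X_{0,0,0}$: correctly identifying the invariant $a$ in the generic case, translating coprimality into the sharp condition $a\neq 1$, and verifying that every two-or-more-zero sub-case is excluded by one of the three hypotheses. The rescaling computations themselves amount to $3\times 3$ linear systems over $\mathbb{R}$ (after taking logarithms, modulo the signs of $\alpha,\beta,c$) and are routine.
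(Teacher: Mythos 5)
Your proposal is correct and follows essentially the same route as the paper: starting from Proposition \ref{pr-15}, applying diagonal rescalings $(x,y,t)\mapsto(\alpha x,\beta y,ct)$, identifying the invariant $a=b_{0,2}a_{1,0}/(a_{0,2}b_{1,0})$ for $X_{0,0,0}$ (which matches the paper's explicit change of variables), and doing a case analysis on which coefficients vanish. If anything, your treatment of the sub-cases with two or more vanishing coefficients is slightly more systematic than the paper's, which only enumerates the single-zero cases.
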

\begin{proof}
By Proposition \ref{pr-15}, one can note that in system $X_{0,0,0}$ we have that $a_{0,2}b_{1,0}\neq a_{1,0}b_{0,2}$, otherwise its polynomials $P$ and $Q$ are not
coprime. If $a_{0,2}b_{1,0}a_{1,0}b_{0,2}\neq0$,  then the rescaling of   variables $(X, Y, T ) = \big((a_{0,2}b_{1,0}^{2}/a_{1,0}^{3})x,(a_{0,2}b_{1,0}/a_{1,0}^{2})y,a_{1,0}t\big)$ writes system
$X_{0,0,0}$ into the system $\frac{dX}{dT}=Y^{2}+X$, $\frac{dY}{dT}=aY^{2}+X$, with $a(a-1)\neq0$. If $a_{0,2}b_{1,0}a_{1,0}b_{0,2}=0$, then $a_{0,2}b_{1,0}\neq0$, $a_{1,0}b_{0,2}=0$ $(a_{1,0}\neq b_{0,2})$ or
$a_{0,2}b_{1,0}=0$ $(a_{0,2}\neq b_{1,0})$, $a_{1,0}b_{0,2}\neq0$. If $a_{0,2}b_{1,0}\neq0$, $a_{1,0}b_{0,2}=0$,  then our proof can be split  into three cases:\\
$Case\;i.$ $a_{1,0}\neq0$, $b_{0,2}=0$.

The rescaling of   variables $(X, Y, T ) =\big((a_{0,2}b_{1,0}^{2}/a_{1,0}^{3})x,(a_{0,2}b_{1,0}/a_{1,0}^{2})y,a_{1,0}t\big)$ writes system $X_{0,0,0}$ into   $\frac{dX}{dT}=Y^{2}+X$,
$\frac{dY}{dT}=X$.\\
$Case\;ii.$ $a_{1,0}=0$, $b_{0,2}\neq0$.

Then by the transformation $(X, Y, T ) =\big((b_{0,2}^{3}/a_{0,2}^{2}b_{1,0})x,(b_{0,2}^{2}/a_{0,2}b_{1,0})y,(a_{0,2}b_{1,0}/b_{0,2})t\big)$, we can writes system $X_{0,0,0}$ into   $\frac{dX}{dT}=Y^{2}$,
$\frac{dY}{dT}=Y^{2}+X$.

Similarly, if $a_{0,2}b_{1,0}=0$, $a_{1,0}b_{0,2}\neq0$, our discussion should  be divided into three cases:\\
$Case\;i.$ $a_{0,2}\neq0$, $b_{1,0}=0$.

The  rescaling of   variables $(X, Y, T ) =\big((b_{0,2}^{2}/a_{1,0}a_{0,2})x,(b_{0,2}/a_{1,0})y,a_{1,0}t\big)$ writes system $X_{0,0,0}$ into $\frac{dX}{dT}=Y^{2}+X$,
$\frac{dY}{dT}=Y^{2}$.\\
$Case\;ii.$ $a_{0,2}=0$, $b_{1,0}\neq0$.

The  rescaling of   variables $(X, Y, T ) =\big((b_{0,2}b_{1,0}/a_{1,0}^{2})x,(b_{0,2}/a_{1,0})y,a_{1,0}t\big)$ writes system $X_{0,0,0}$ into   $\frac{dX}{dT}=X$,
$\frac{dY}{dT}=Y^{2}+X$.

For system $X_{0,0,2,0,1,1}$ we have that $a_{2,0}b_{0,2}b_{1,0}\neq0$. Then by the transformation $(X, Y, T ) =\big((a_{0,2}^{2}/b_{0,2}b_{1,0})x,(a_{2,0}/b_{1,0})y,(b_{0,2}b_{1,0}/a_{2,0})t\big)$, system $X_{0,0,2,0,1,1}$ is changed
to the system $\frac{dX}{dT}=X^{2}$, $\frac{dY}{dT}=Y^{2}+X$.

The proof is finished.
\end{proof}

\begin{proposition}\label{pr-19}
If system \eqref{1} is a semi-quasi homogeneous but non-semihomogeneous cubic coprime polynomial differential system  with the minimal weight vector $w_{m}$, then after a linear transformation and a rescaling of time,  it can be written as one of the following systems
\begin{itemize}
\item[(\;$A_{1}$\;)] $\dot{x}=y^{2}+x$, $\dot{y}=x^{3}$, with $w_{m}=(2,1,1,6)$.
\item[($B_{1,k}$)]$\dot{x}=y^{3}+x^{2}$, $\dot{y}=Q(x,y)$, with $w_{m}=(3,2,4,2^{k}-1)$, $Q(x,y)\in\{x^{2}y,y^{2},y\}$, $deg(Q(x,y))=k$, $k=1,2,3$.
\item[$(C_{1,k}$)]$\dot{x}=x^{1+k}y^{2-k}$, $\dot{y}=y^{3}+x^{2}$, with $w_{m}=(3,2,5+k,5)$, $k=1,2$.
\item[$(\;D_{1}\;)$]$\dot{x}=y^{2}$, $\dot{y}=y^{3}+x^{2}$, with $w_{m}=(3,2,2,5)$.
\item[$(E_{1,k})$]$\dot{x}=x^{k}y^{2-k}$, $\dot{y}=y^{3}+x$, with $w_{m}=(3,1,2^{k},3)$, $k=1,2$.
\item[$(\;F_{1}\;)$]$\dot{x}=x^{3}$, $\dot{y}=y^{2}+x$, with $w_{m}=(2,1,5,2)$.
\item[$(G_{1,k}^{\pm})$]$\dot{x}=\pm y^{k+1}+x$, $\dot{y}=(\pm1)^{k} x^{3-k}y^{k}$, with $w_{m}=(1+k,k,3k-2,4+k)$, $k=1,2$.
\item[$(H_{1,k}^{\pm})$]$\dot{x}=\pm x^{k-1}y^{4-k}+x^{2}$, $\dot{y}=x^{2-k}y^{k-1}$, with $w_{m}=(4-k,3-k,5-k,3-k)$, $k=1,2$.
\item[$(I_{1,k}^{\pm})$]$\dot{x}=x^{4-k}y^{k-1}$, $\dot{y}=y^{2-k}((\pm 1)^{k}y^{2}+(\pm 1)^{k+1}x)$, with $w_{m}=(2,1,6-k,4-k)$, $k=1,2$.
\item[$(J_{1,k}^{\pm})$]$\dot{x}=xy^{2-k}$, $\dot{y}=(\pm 1)^{k-1}y^{3}\pm x^{2}$, with $w_{m}=(3,2,5-2k,5)$, $k=1,2$.
\item[$(L_{1,k}^{\pm})$]$\dot{x}=(\pm x)^{k+1}y^{2-k}$, $\dot{y}=\pm y^{3}+x$, with $w_{m}=(3,1,2k+3,3)$, $k=1,2$.
\item[$(M_{1,k}^{\pm})$]$\dot{x}=\pm y^{3}+x^{k}y^{2-k}$, $\dot{y}=\pm x^{3}$, with $w_{m}=(k+1,k,2k,2k+4)$, $k=1,2$.
\item[(\;$A_{2}$\;)] $\dot{x}=ay^{3}+x$, $\dot{y}=\pm y^{3}+x$, with $a(a\mp1)\neq0$, or $\dot{x}=\pm y^{3}+a_{1}x$, $\dot{y}=a_{2}y^{3}+a_{3}x$, or $\dot{x}=a_{1}y^{3}+a_{2}x$, $\dot{y}=\pm y^{3}+a_{3}x$, and both with $w_{m}=(3,1,1,3)$, where $a_{i}\in\{0,1\}$ and $a_{1}+a_{2}+a_{3}=2$.
\item[(\;$B_{2}$\;)] $\dot{x}=ay^{2}+x$, $\dot{y}=\pm y^{3}+xy$, with $a(a\mp1)\neq0$, or $\dot{x}=a_{1}y^{2}+a_{2}x$, $\dot{y}=\pm y^{3}+a_{3}$, with $a_{i}\in\{0,1\}$ and $a_{1}+a_{2}+a_{3}=2$, and both with $w_{m}=(2,1,1,3)$.
\item[(\;$C_{2}$\;)] $\dot{x}=y^{2}+x$, $\dot{y}=xy^{2}+ax^{2}$, with $a\neq1$, or $\dot{x}=y^{2}$, $\dot{y}=xy^{2}\pm x^{2}$, and both with $w_{m}=(2,1,1,4)$.
\item[(\;$D_{2}$\;)]$\dot{x}=ay^{3}+xy$, $\dot{y}=xy^{2}\pm x^{2}$, with $a(a\mp1)\neq0$, or $\dot{x}=\pm y^{3}+a_{1}xy$, $\dot{y}=a_{2}xy^{2}\pm x^{2}$, with $a_{i}\in\{0,1\}$ and $a_{1}+a_{2}=1$, and both with $w_{m}=(2,1,2,4)$.
\item[(\;$E_{2}$\;)] $\dot{x}=ay^{3}+x^{2}$, $\dot{y}=y^{3}+x^{2}$, with $a(a-1)\neq0$, or $\dot{x}=a_{1}y^{3}+a_{2}x^{2}$, $\dot{y}=a_{3}y^{3}+a_{4}x^{2}$, with $a_{i}\in\{0,1\}$ and $a_{1}+a_{2}+a_{3}+a_{4}=3$, and both with $w_{m}=(3,2,4,5)$.
\item[(\;$F_{2}$\;)] $\dot{x}=axy^{2}+x^{2}$, $\dot{y}=y^{2}+x$, with $a(a-1)\neq0$, or $\dot{x}=xy^{2}+a_{1}x^{2}$, $\dot{y}=y^{2}+a_{2}x$, with $a_{i}\in\{0,1\}$ and $a_{1}+a_{2}=1$, and both with $w_{m}=(2,1,3,2)$.
\end{itemize}
\end{proposition}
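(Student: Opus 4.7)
The plan is to proceed by a systematic case analysis starting from the explicit list of cubic \emph{PSQHPDS} furnished by Proposition \ref{pr-16}. For each canonical family $X_{\ast}$ appearing there, I will first impose the coprimality condition on $(P,Q)$: this forces certain determinantal expressions in the coefficients to be nonzero, which in turn splits the family into subcases according to which of the free coefficients $a_{i,j},b_{i,j}$ vanish. The systems in Proposition \ref{pr-16} with two monomials in one component and one in the other (e.g.\ $X_{1,0,k}$, $X_{0,0,0,k,1,1}$, $X_{0,0,0,k,1,2}$, $X_{0,0,1,k,1,1}$) are the generic case, while systems with only single monomials ($X_{0,1,k,0,\ldots}$, $X_{1,0,k,0,\ldots}$, $X_{0,0,k,0,\ldots}$, $X_{2,0,1,0,\ldots}$ and $X_{0,2,\ldots}$) are simpler and reduce to the families $(B_{1,k})$--$(M_{1,k}^{\pm})$.

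Next, for each subcase I will apply a rescaling of coordinates and time of the form $(X,Y,T)=(\alpha x,\beta y,\gamma t)$, choosing $\alpha,\beta,\gamma\in\mathbb{R}\setminus\{0\}$ so as to normalize three of the nonzero coefficients to $\pm 1$. The weight-homogeneity encoded in the minimal weight vector $w_m$ makes this rescaling compatible: because $P$ and $Q$ are weighted-homogeneous with respect to $w_m$, the rescaling acts monomially on the coefficients, so solving the resulting diagonal system of multiplicative equations always has a real solution (up to sign, when the degree of a variable is even). This is precisely how the quadratic case was handled in the proof of Proposition \ref{pr-18}, and the procedure will be mimicked term-by-term here. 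The sign ambiguity when taking an even root accounts for the appearance of the $\pm$ superscripts in the families $(G_{1,k}^{\pm})$, $(H_{1,k}^{\pm})$, $(I_{1,k}^{\pm})$, $(J_{1,k}^{\pm})$, $(L_{1,k}^{\pm})$ and $(M_{1,k}^{\pm})$.

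After rescaling, a general linear change of coordinates $(x,y)\mapsto(x+\mu y^{r},y)$ (when compatible with the weight vector) can sometimes further eliminate a monomial. I will use this to remove the  ``lower-order'' summand in those families whose quasi-homogeneous structure permits it, obtaining the normal forms with a minimal number of parameters. Where such a further reduction is \emph{not} compatible with $w_m$, the parameter $a$ survives with the listed open conditions (e.g.\ $a(a-1)\ne 0$ or $a(a\mp 1)\ne 0$), exactly as in families $(A_2)$--$(F_2)$.

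The main obstacle is organizational: Proposition \ref{pr-16} produces roughly thirty distinct families and several of them split into several coprime subfamilies, so bookkeeping is the central difficulty. A secondary technical point is that for certain families (notably those whose minimal weight vector has $s_1$ or $s_2$ even, such as $w_m=(3,2,\ast,\ast)$) the sign of the rescaling parameter cannot always be absorbed, forcing the $\pm$ case distinction; one must verify that the two signs indeed give non-equivalent systems and are not conjugated by the involution $(x,y)\mapsto(-x,-y)$ or $(x,y)\mapsto(x,-y)$. Once this case analysis is completed exhaustively across all families of Proposition \ref{pr-16}, each coprime cubic \emph{PSQHPDS} lands in exactly one of the canonical forms $(A_1)$--$(F_2)$, which establishes the proposition.
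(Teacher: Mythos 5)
Your proposal follows essentially the same route as the paper's Appendix proof: enumerate the cubic families from Proposition \ref{pr-16}, impose coprimality to split each family into subcases according to vanishing coefficients, and normalize by diagonal rescalings $(X,Y,T)=(\alpha x,\beta y,\gamma t)$, with the even exponents in the resulting multiplicative equations producing the $\pm$ superscripts. The only deviation is your optional shear $(x,y)\mapsto(x+\mu y^{r},y)$, which the paper never uses (and which is not a linear transformation for $r>1$); it is unnecessary here, since the parameters $a$ that survive in families $(A_2)$--$(F_2)$ are exactly those that the diagonal rescalings cannot remove.
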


The proof is similar to the proof of Proposition \ref{pr-18}. For the sake of brevity, we will put the detailed  proof in the Appendix of this paper.

\begin{proof}[Proof of Theorem \ref{thm-21}]
By Lemma 11 of \cite{27}, we know that for a semi-quasi homogeneous coprime polynomial vector field $X=(P,Q)$ the origin is the unique (real and finite) singular point of $X$.

$(i)$  $n=2$.  According to \cite{52} (see Lemma 8.14 of Chapter 8),  if a quadratic vector field $X$ has a center, then up to a translation, a linear transformation and a time rescaling, it can be written as
  \begin{align*}
    \dot{x}=-y-bx^2-Cxy-dy^{2},   \  \dot{y}=x+ax^2+Axy-ay^{2}.
  \end{align*}
  Therefore, from   Proposition \ref{pr-18}, it is not hard to see that any {\itshape PSQHPDS} with degree $n=2$ has not center.

$(ii)$  $n=3$.  In this case we will use   Proposition \ref{pr-19} to prove the result.  Consider the cubic vector fields appearing in Proposition \ref{pr-19}.   It is obviously that    vector fields $(B_{1,k})$, $(B_{2})$, $(C_{1,k})$, $(D_{0,k})$, $(E_{1,k})$, $(F_1)$, $(F_2)$ $(G_{1,k}^\pm)$, $(H_{1,2}^\pm)$, $(I_{1,k}^\pm)$, $(L_{1,k}^\pm)$   have the invariant line on the coordinate axis, and the first component of vector field $(D_{1})$
is always  nonnegative which implies that $(D_{1})$ can not has periodic orbit,
thus it suffice to study the
remaining  vector fields:  $(A_1)$,  $(H_{1,1}^\pm)$, $(M_{1,k}^\pm)$, $(A_2)$, $(C_2)$, $(D_{2})$ and $(E_2)$.

To be clear, we will divide the discussions into several cases.
\begin{enumerate}[(1)]
  \item Systems   $(H_{1,1}^{+})$, $(M_{1,k}^{\pm})$, $(A_2)$, $(C_2)$, $(D_{2})$ and $(E_2)$.

It is already known that,   if  we blow up the semi-quasi homogeneous coprime polynomial differential system \eqref{1} at the origin, which is a Lojasiewicz singularity,
 and
we find that some  singularities  of the corresponding blow-up system is elementary, then the origin of system \eqref{1} cannot be a center
 (see Chapter 3 of  \cite{52}).

Taking $(x,y)=(\bar{x}^2,\bar{x}\bar{y})$ , we blow-up   system $(H_{1,1}^{+})$: $\dot{x}=y^3+x^2$, $\dot{y}=x$ in the positive $x$-direction and get
\begin{equation*}\label{137}
  \dot{\bar{x}}=\frac{1}{2}\bar{x}(\bar{y}^3+\bar{x}),\
\dot{\bar{y}}=1-\frac{1}{2}\bar{y}(\bar{y}^3+\bar{x}).
\end{equation*}

On $\{\bar{x}=0\}$ we find two hyperbolic singularities, situated, respectively, at $\bar{y}=\sqrt[4]{2}$ and $\bar{y}=-\sqrt[4]{2}$.

If we consider the blow-up transformation $(x,y)=(\bar{x}^2,\bar{x}^3\bar{y})$ in the positive $x$-direction, then the subsystems of $(M_{1,1}^{\pm})$: $\dot{x}=\pm y^3+xy$, $\dot{y}=x^3$ becomes
\begin{equation}\label{138}
\dot{\bar{x}}=\frac{1}{2}\bar{x}\bar{y}(1\pm \bar{x}^4\bar{y}^2),\
  \dot{\bar{y}}=1-\frac{3}{2}(1\pm \bar{x}^4\bar{y}^2).
\end{equation}
We obtain two hyperbolic singularities, located at the points $(\bar{x},\bar{y})=(0,\pm\sqrt{2/3})$.

Blowing up the subsystems of $(M_{1,1}^{\pm})$: $\dot{x}=\pm y^3+xy$, $\dot{y}=-x^3$ in the negative $x$-direction $(x=-\bar{x}^2,y=\bar{x}^3\bar{y})$ gives
\begin{equation}\label{139}
\dot{\bar{x}}=-\frac{1}{2}\bar{x}\bar{y}(\pm \bar{x}^4\bar{y}^2-1),\
\dot{\bar{y}}=1+\frac{3}{2}\bar{y}^2(\pm \bar{x}^4\bar{y}^2-1).
\end{equation}
By computing the  singularities of \eqref{139} on $\{\bar{x}=0\}$ we obtain $(\bar{x},\bar{y})=(0,\pm \sqrt{2/3})$ which are hyperbolic.

We blow up   systems $(M_{1,2}^{\pm})$: $\dot{x}=x^2\pm y^3$, $\dot{y}=\pm x^3$ in the positive $x$-direction $(x,y)=(\bar{x}^3,\bar{x}^2\bar{y})$,  and get the blow-up system
\begin{equation}\label{140}
\dot{\bar{x}}=\frac{1}{3}\bar{x}(1\pm\bar{y}^3),\
\dot{\bar{y}}=\pm\bar{x}^4-\frac{2}{3}(1\pm\bar{y}^3).
\end{equation}
On $\{\bar{x}=0\}$ system \eqref{140} has three singularities $(0,0)$ and $(0,\mp1)$. Moreover, the singularity $(0,0)$ is hyperbolic and singularities $(0,\mp1)$ are semi-hyperbolic.

Blowing up the subsystems of $(A_2)$:  $\dot{x}=y^3$, $\dot{y}=y^3+x$ in the positive $x$-direction $(x=\bar{x}^2,y=\bar{x}\bar{y})$ gives
\begin{equation*}\label{141}
\dot{\bar{x}}=\frac{1}{2}\bar{x}\bar{y}^3,\
 \dot{\bar{y}}=1+\bar{x}\bar{y}^3-\frac{1}{2}\bar{y}^4.
\end{equation*}

On $\{\bar{x}=0\}$ we find two hyperbolic singularities, situated, respectively, at $\bar{y}=\sqrt[4]{2}$ and $\bar{y}=-\sqrt[4]{2}$.

Let's consider the subsystems of  $(C_2)$: $\dot{x}=y^2$, $\dot{y}=xy^2\pm x^2$.
Obviously, this system can not have periodic orbit (and hence hasn't center) because $\dot{x}=y^2\geq 0$.

Consider the subsystems of $(D_{2})$: $\dot{x}=ay^3+xy$, $\dot{y}=xy^2+x^2$ with $a(a-1)\neq0$. By substituting blow-up transformation $(x,y)=(\bar{x},\bar{x}\bar{y})$ in the positive $x$-direction, we get
\begin{equation*}
 \dot{\bar{x}}=\bar{x}\bar{y}(\bar{x}\bar{y}^2+1),\ \dot{\bar{y}}=\bar{x}\bar{y}^2-\bar{y}^2(a\bar{x}\bar{y}^2+1)+1.
\end{equation*}
It is easy to check that  the  singularities $(\bar{x}, \bar{y})=(0, \pm 1)$ are hyperbolic.

 Let us deal with the subsystems of $(D_{2})$: $\dot{x}=ay^3+xy$, $\dot{y}=xy^2-x^2$ with $a(a+1)\neq0$. Consider the blow-up $(x,y)=(\bar{x}\bar{y},-\bar{y})$ in the negative $y$-direction.  The  corresponding blow-up system is
 \begin{equation}\label{142}
 \dot{\bar{x}}=\bar{x}^2\bar{y}-\bar{x}^3-a\bar{y}-\bar{x},\
   \dot{\bar{y}}=\bar{x}\bar{y}(\bar{x}-\bar{y}).
 \end{equation}
The unique  singularity on $\bar{y}=0$ occurs for $\bar{x}=0$. It is clear  that the coefficient matrix of the  linear part of   vector field \eqref{142} at $(\bar{x},\bar{y})=(0,0)$ is
\begin{align*}
  \begin{pmatrix}
    -1&-a\\
    0&0
  \end{pmatrix}.
\end{align*}
Then the singularity of system \eqref{142} is semi-hyperbolic.

Let us study the subsystems of $(E_2)$: $\dot{x}=ay^3+x^2$, $\dot{y}=y^3+x^2$ with $a(a-1)\neq0$. By substituting  blow-up $(x,y)=(\bar{x}^3,\bar{x}^2\bar{y})$ in the positive $x$-direction into $(E_2)$, we have
\begin{equation}\label{143}
  \dot{\bar{x}}=\frac{1}{3}\bar{x}(a\bar{y}^3+1),\
 \dot{\bar{y}}=\bar{x}(\bar{y}^3+1)-\frac{2}{3}\bar{y}(a\bar{y}^3+1).
\end{equation}
It is easy to calculate that $(0,0)$, $(0,-\sqrt[3]{1/a})$ are singularities of \eqref{143} on the line $\{\bar{x}=0\}$. By direct computation their linear part of the vector field at these  singularities, we find that  $(0,0)$ and $(0,-\sqrt[3]{1/a})$ are respectively hyperbolic and semi-hyperbolic.

Next consider the subsystems of $(E_2)$
\begin{equation*}\label{144}
  \dot{x}=y^3+x^2,\
    \dot{y}=x^2.
\end{equation*}
This system can not have periodic orbit (and hence hasn't center) because $\dot{y} \geq 0$.

Finally let us study  the subsystems of $(E_2)$:  $\dot{x}=y^3$, $\dot{y}=y^3+x^2$. Taking  blow up $(x,y)=(\bar{x}^4,\bar{x}^3\bar{y})$ in the positive $x$-direction, we obtain
\begin{equation*}
  \dot{\bar{x}}=\frac{1}{4}\bar{x}\bar{y}^3,\
 \dot{\bar{y}}=\bar{x}\bar{y}^3-\frac{3}{4}\bar{y}^4+1.
\end{equation*}
On $\{\bar{x}=0\}$ we find two hyperbolic singularities, situated, respectively, at $\bar{y}=\sqrt[4]{4/3}$ and $\bar{y}=-\sqrt[4]{4/3}$.

  \item Systems $(A_1)$, $(C_2)$ and $(A_2)$.

  In system $(A_1)$: $\dot{x}=y^2+x$, $\dot{y}=x^3$, using  Bendixson's Criteria (the seventh chapter of \cite{52}), we obtain $\partial(y^2+x)/\partial x+\partial x^3/\partial y=1$. Then system $(A_1)$ has no periodic orbit, that is, the origin is not a center.

For  the subsystems of $(C_2)$: $\dot{x}=y^2+x$, $\dot{y}=xy^2+ax^2$ $(a\neq1)$, applying Bendixsons Criteria, we have $\partial(y^2+x)/\partial x+\partial(xy^2+ax^2)/\partial y=1+2xy$. Then there exists a neighborhood $U$ of the origin  consisting not  periodic orbits.

Again, by Bendixsons Criteria,  neither the subsystems of $(A_2)$: $\dot{x}=ay^3+x$, $\dot{y}=\pm y^3+x$ ($a(a\pm 1)\neq0$)   nor $\dot{x}=\pm y^3+x$, $\dot{y}=x$ contain periodic orbits in    a small neighborhood   of the origin. Thus, the  origin can not be  a center.

  \item Systems $(A_2)$.

The subsystems of $(A_2)$: $\dot{x}=-y^3$, $\dot{y}=y^3+x$ has $V(x,y)=2x^2+y^4$ as a Lyapunov function on $\mathbb{R}^2 \setminus \{(0,0)\}$ and the set $\{(x,y)\mid\dot{V}=4y^6=0\}=\{y=0\}$ does not contain a full trajectory. Hence, the origin is asymptotically stable.

  \item Systems  $(H_{1,1}^{-})$.

 The system $(H_{1,1}^{-})$: $\dot{x}=-y^3+x^2$, $\dot{y}=x$  has the first integral
\begin{align*}
    H(x,y)=(y^3+\frac{3}{2}y^2+\frac{3}{2}y-x^2+\frac{3}{4})e^{-2y},
\end{align*}
with integrating factor $M(y)=-2e^{-2y}$. Since $(0,0)$ is an extreme  point for $H(x,y)$,  the origin of system $(H_{1,1}^{-})$ is a center. In the period annulus, the Hamiltonian function takes values between $h_s=0$ and $h_c=3/4$, where the Hamiltonian values $h_s$ and $h_c$ correspond to the period annulus terminates at the separatrix polycycle and the center at $(0,0)$ respectively. Hence, in the period annulus, we obtain $H(x,y)=h\in(0,3/4)$.

The proof is completed.
\end{enumerate}
\end{proof}

\section*{Appendix: Proof of Proposition \ref{pr-19}}\label{app-1}
In this Appendix, we will give the   proof of Proposition \ref{pr-19}.

\begin{proof}[Proof of Proposition \ref{pr-19}]
For the sake of clarity, we will split the discussions into several cases.
\begin{enumerate}[(1)]
\item Systems $X_{1,0,0,3}$, $X_{0,1,0,2}$, $X_{0,2,0,1}$, $X_{0,1,3,0}$, $X_{0,2,3,0}$, $X_{1,0,2,0}$ and $X_{2,0,1,0}$.

Let's first consider $X_{1,0,0,3}$. Then by the transformation
$$(X, Y, T ) =\big(x,a_{0,2}^{1/3}b_{3,0}^{-1/3}y,a_{0,2}^{1/3}b_{3,0}^{2/3}t\big),$$
we can writes system $X_{1,0,0,3}$ into the system $\frac{dX}{dT}=Y^{2}$,
$\frac{dY}{dT}=X^{3}$.  Thus  we get  the canonical form $(A_{0})$ of system $X_{1,0,0,3}$ in Proposition \ref{pr-19}.

Similarly, we can get the canonical form $(B_{0,1})$, $(B_{0,2})$, $(C_{0,1})$, $(C_{0,2})$, $(D_{0,1})$ and $(D_{0,2})$ from
$X_{0,1,0,2}$, $X_{0,2,0,1}$, $X_{0,1,3,0}$, $X_{0,2,3,0}$, $X_{1,0,2,0}$ and $X_{2,0,1,0}$ respectively. For the sake of brevity we omit the detail discussion.
\item Systems $X_{1,0,3}$, $X_{0,0,0,2,1,2}$, $X_{0,1,0,0,1,2}$, $X_{0,2,0,0,1,2}$, $X_{0,0,2,0,1,2}$, $X_{0,0,3,0,1,2}$, $X_{1,0,0,0,1,2}$, $X_{1,0,1,0,2,1}$,
$X_{1,0,2,0,2,1}$ and $X_{0,1,3,0,1,1}$.

 For system $X_{0,0,0,2,1,2}$, we have $a_{0,3}a_{2,0}b_{2,1}\neq0$. Taking $(X,Y,T)=\big(b_{2,1}a_{2,0}^{-1}x,a_{2,0}^{-1}\\b_{2,0}^{2/3}a_{0,3}^{1/3}y, a_{2,0}^{2}b_{2,1}^{-1}t\big),$
system $X_{0,0,0,2,1,2}$ is transformed into $\frac{dX}{dT}=Y^{3}+X^{2}$, $\frac{dY}{dT}=X^{2}Y$. Therefore, we get the canonical form $(B_{1,3})$ of system $X_{0,0,0,2,1,2}$.

 Similarly, we can obtain the canonical forms of $X_{1,0,3}$, $X_{0,1,0,0,1,2}$, $X_{0,2,0,0,1,2}$, $X_{0,0,2,0,1,2}$, $X_{0,0,3,0,1,2}$, $X_{1,0,0,0,1,2}$, $X_{1,0,1,0,2,1}$,
$X_{1,0,2,0,2,1}$ and $X_{0,1,3,0,1,1}$, which are respectively $(A_{1})$, $(B_{1,2})$, $(B_{1,1})$, $(C_{1,1})$, $(C_{1,2}$), $(D_{1})$, $(E_{1,1})$, $(E_{1,2})$ and $(F_{1})$.
\item Systems $X_{1,0,2}$, $X_{0,0,0,1,1,2}$, $X_{0,2,0,1,1,2}$, $X_{0,2,1,0,1,1}$, $X_{0,0,3,0,1,1}$, $X_{1,0,1,0,1,2}$, $X_{2,0,1,0,1,2}$, $X_{0,0,2,0,2,1}$, $X_{0,0,3,0,2,1}$ and $X_{0,1,2,0,1,1}$.

Since in system $X_{1,0,2}$ we have that $a_{0,2}a_{1,0}b_{2,1}\neq0$. Then by the transformation $(X, Y, T )=\big(|b_{2,1}a_{0,1}^{-1}|^{1/2}x,|a_{0,2}|^{1/2}|b_{2,1}a_{0,1}^{-3}|^{1/4}y,a_{0,1}t\big)$, we can
writes system $X_{1,0,2}$ into the system $\frac{dX}{dT}=\pm Y^{2}+X$, $\frac{dY}{dT}=\pm X^{2}Y$. This yields the canonical form
$(G_{1,1}^{\pm})$ of Proposition \ref{pr-19}.

Next consider vector field  $X_{0,0,0,1,1,2}$. Taking transformation $(X,Y,T)=\big(|a_{0,3}^{-1}b_{1,2}^{3}|^{1/2}\\a_{2,0}^{-1}x,b_{1,2}a_{2,0}^{-1}y,|a_{0,3}b_{1,2}^{-3}|^{1/2}t\big)$, $X_{0,0,0,1,1,2}$ becomes $\frac{dX}{dT}=\pm Y^{3}+X$, $\frac{dY}{dT}=XY^{2}$, with $w_{m}=(3,2,4,6)$. This yields the canonical form
$(G_{1,2}^{\pm})$ of Proposition \ref{pr-19}.
\par In the analogous way, we can obtain the canonical forms of systems $X_{0,2,0,1,1,2}$, $X_{0,2,1,0,1,1}$, $X_{0,0,3,0,1,1}$, $X_{0,1,2,0,1,1}$, $X_{1,0,1,0,1,2}$, $X_{2,0,1,0,1,2}$, $X_{0,0,2,0,2,1}$, $X_{0,0,3,0,2,1}$ ,$X_{0,0,0,3,1,1}$ and $X_{0,0,0,3,1,2}$,
which are respectively $(H_{1,1}^{\pm})$, $(H_{1,2}^{\pm})$, $(I_{1,1}^{\pm})$, $(I_{1,2}^{\pm})$, $(J_{1,1}^{\pm})$, $(J_{1,2}^{\pm})$, $(L_{1,1}^{\pm})$,
$(L_{1,2}^{\pm})$, $(M_{1,1}^{\pm})$ and $(M_{1,2}^{\pm})$.

\item Systems $X_{0,0,0}$, $X_{1,0,0}$, $X_{1,0,1}$, $X_{0,0,0,1,1,1}$, $X_{0,0,0,0,1,2}$ and $X_{0,1,1,0,1,1}$.
\par By Proposition \ref{pr-16}, one can observe  that in system $X_{0,0,0}$ we have that $a_{0,3}b_{1,0}\neq a_{1,0}b_{0,3}$, because  otherwise its polynomials $P$ and $Q$ are not coprime.
If $a_{0,3}b_{1,0}a_{1,0}b_{0,3}\neq 0$, then the rescaling of   variables $(X,Y,T)=(b_{1,0}|b_{0,3}a_{1,0}^{-1}|^{1/2}a_{1,0}^{-1}x,|b_{0,3}a_{1,0}^{-1}|^{1/2}y,a_{1,0}t)$
writes system $X_{0,0,0}$ into   $\frac{dX}{dT}=aY^{3}+X$, $\frac{dY}{dT}=\pm Y^{3}+X$, with $a(a\mp1)\neq0$. If $a_{0,3}b_{1,0}a_{1,0}b_{0,3}= 0$, then
$a_{1,0}b_{0,3}=0$ $(a_{1,0}\neq b_{0,3})$, $a_{0,3}b_{1,0}\neq 0$ or $a_{1,0}b_{0,3}\neq0$, $a_{0,3}b_{1,0}=0$ $(a_{0,3}\neq b_{1,0})$. If $a_{1,0}b_{0,3}=0$ $(a_{1,0}\neq b_{0,3})$, $a_{0,3}b_{1,0}\neq 0$,
then our proof is divided into three cases:\\
$Case\;i.$ $a_{1,0}=0$, $b_{0,3}\neq 0$.

The  rescaling of   variables $(X, Y, T ) =\big(b_{0,3}^{2}b_{0,1}|a_{0,3}b_{1,0}|^{-3/2}x,|a_{0,3}b_{1,0}|^{-1}b_{0,3}y, |a_{0,3} \\ \cdot b_{1,0}|b_{0,3}^{-1}t\big)$ writes system $X_{0,0,0}$ into the system $\frac{dX}{dT}=\pm Y^{3}$,
  $\frac{dY}{dT}=Y^{3}+X$.\\
$Case\;ii.$ $a_{1,0}\neq0$, $b_{0,3}=0$.

The  rescaling of   variables $(X, Y, T ) =\big(a_{1,0}^{-1}b_{1,0}|a_{1,0}|^{-1}x|a_{0,3}b_{1,0}|^{1/2},|a_{0,3}b_{1,0}|^{1/2}\\ \cdot |a_{1,0}|^{-1}y,a_{1,0}t\big)$ writes system $X_{0,0,0}$ into   $\frac{dX}{dT}=\pm Y^{3}+X$,
  $\frac{dY}{dT}=X$.
\par Similarly, if $a_{1,0}b_{0,3}\neq 0$, $a_{0,3}b_{1,0}=0$, our discussion can be  divided into three cases:\\
$Case\;i.$ $a_{0,3}\neq0$, $b_{1,0}=0$.

Then  the transformation $(X, Y, T )=\big(a_{1,0}a_{0,3}^{-1}|b_{0,3}a_{1,0}^{-1}|^{3/2}x,|b_{0,3}a_{1,0}^{-1}|^{1/2}y,a_{1,0}t\big)$ writes system $X_{0,0,0}$ into  $\frac{dX}{dT}=Y^{3}+X$,
$\frac{dY}{dT}=\pm Y^{3}$.\\
$Case\;ii.$ $a_{0,3}=0$, $b_{1,0}\neq0$.

Then the rescaling of   variables $(X, Y, T ) =\big(b_{1,0}a_{1,0}^{-1}|b_{0,3}a_{1,0}^{-1}|^{1/2}x,|b_{0,3}a_{1,0}^{-1}|^{1/2}y, a_{1,0}t\big)$ writes system $X_{0,0,0}$ into  $\frac{dX}{dT}=X$,
  $\frac{dY}{dT}=\pm Y^{3}+X$. This prove the $(A_{2})$ of Proposition \ref{pr-19}.

Analogously, we get system $(B_{2})$ from $X_{1,0,0}$, system $(C_{2})$ from $X_{1,0,1}$, system $(D_{2})$ from $X_{0,0,0,1,1,1}$, system $(E_{2})$ from $X_{0,0,0,0,1,2}$, system $(F_{2})$ from $X_{0,1,1,0,1,1}$.

Thus  the proof of Proposition \ref{pr-19} is completed.
\end{enumerate}
\end{proof}

\bibliographystyle{plain}
\bibliography{ref}
\medskip
Received xxxx 2018; revised xxxx 20xx.
\medskip
\end{document}